\theoremstyle{plain}
\newtheorem{theorem}{Theorem}[section]
\newtheorem{corollary}[theorem]{Corollary}
\newtheorem{definition}[theorem]{Definition}
\newtheorem{lemma}[theorem]{Lemma}
\newtheorem{proposition}[theorem]{Proposition}
\newtheorem{conjecture}[theorem]{Conjecture}
\theoremstyle{definition}
\newtheorem{remark}[theorem]{Remark}
\numberwithin{equation}{section}
\newcommand{\diff}{\mathop{}\!\mathrm{d}}
\DeclareMathOperator{\Hess}{Hess}
\DeclareMathOperator{\tr}{tr}
\DeclareMathOperator{\divr}{div}
\DeclareMathOperator{\spn}{span}
\DeclareMathOperator{\supp}{supp}
\DeclareMathOperator{\sgn}{sgn}
\title{A Helmholtz-type decomposition for the space of symmetric matrices}
\author[1]{Evan Miller}
\author[2]{Eric Sawyer}
\affil[1]{University of British Columbia, Department of Mathematics

emiller@msri.org}
\affil[2]{McMaster University,
Department of Mathematics and Statistics

sawyer@mcmaster.ca}
\begin{document}

\maketitle

\begin{abstract}
In this paper, we introduce a Helmholtz-type decomposition for the space of square integrable, symmetric-matrix-valued functions analogous to the standard Helmholtz decomposition for vector fields. This decomposition provides a better understanding of the strain constraint space, which is important to the Navier--Stokes regularity problem. In particular, we give a full characterization the orthogonal complement of the strain constraint space and investigate the geometry of the eigenvalue distribution of matrices in the strain constraint space.
\end{abstract}

\section{Introduction}

The Navier--Stokes equation is one of the most important equations in fluid dynamics. There is a copious literature on the role of the vorticity---which represents the anti-symmetric part of the gradient of velocity and describes rotation by the motion of the fluid---in the evolution of solutions of the Navier--Stokes equation. There has been much less work focused on the role of the strain matrix, which is the symmetric part of the gradient of velocity, and describes deformations by the motion of the fluid.

In general, most of the focus on strain has been on its role in vortex stretching and its nonlocal dependence on vorticity, neglecting the role of strain self-amplification, which increasingly appears to play an important role in its own right.
Recently, the first author considered mild solutions of the strain evolution equation \cite{MillerStrain}, providing an alternative proof for a geometric regularity criterion in terms of the positive part of the middle eigenvalue of the strain first proven by Neustupa and Penel in \cite{NeustupaPenel}. The first author later proved blowup for a toy model for the strain that respects the constraint space and the identity for enstrophy growth, giving a perturbative condition for blowup for the full Navier--Stokes equation \cite{MillerStrainToyModel}. 
In addition to these works studying the role of the strain matrix utilizing methods from the analysis of PDEs, there has also been work on this topic in the fluid mechanics literature. Carbone and Bragg recently showed that the self-amplification of strain was a more important feature of the turbulent energy cascade than vortex stretching \cite{CarboneBragg}, which had been previously assumed to play the dominant role, both in the turbulent energy cascade and in the Navier--Stokes regularity problem. 

In light of these recent works suggesting that the strain plays an essential role in the Navier--Stokes regularity problem, as well as the phenomenology of turbulence, it is necessary to develop a more complete and detailed understanding of the strain constraint space. In \cite{MillerStrain}, the first author defined the space of strain matrices as the symmetric gradients of divergence free vector fields
\begin{equation}
    \mathcal{L}^2_{st}
    =
    \nabla_{sym}\dot{\mathbf{H}}^1_{df},
\end{equation}
and gave the following characterisation of this constraint space.

\begin{theorem} \label{CharacterizationARMA}
    For all $S\in\mathcal{L}^2$,
    $S\in \mathcal{L}^2_{st}$ if and only if
    \begin{equation}
        \tr(S)=0
    \end{equation}
    and
    \begin{equation}
        S+2\nabla_{sym}\divr(-\Delta)^{-1} S=0.
    \end{equation}
\end{theorem}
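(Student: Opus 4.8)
My plan is to reduce both implications to short computations with the relevant Fourier multipliers, built on two elementary identities: $\tr(\nabla_{sym}v)=\divr v$ for every vector field $v$, and $\divr(\nabla_{sym}v)=\tfrac12\Delta v$ whenever $\divr v=0$. The candidate for the inverse of $\nabla_{sym}$ on the constraint space is forced on us: the displayed identity $S+2\nabla_{sym}\divr(-\Delta)^{-1}S=0$ says precisely that $S=\nabla_{sym}u$ for $u:=-2\divr(-\Delta)^{-1}S$, so the whole proof amounts to checking that this $u$ really lies in $\dot{\mathbf{H}}^1_{df}$, and conversely that a genuine strain $S=\nabla_{sym}u$ satisfies the identity.

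For the ``only if'' direction I would argue as follows. Suppose $S=\nabla_{sym}u$ with $u\in\dot{\mathbf{H}}^1_{df}$. Then $\tr(S)=\tr(\nabla_{sym}u)=\divr u=0$ at once. Using $\divr u=0$ in the second identity above gives $\divr S=\tfrac12\Delta u$, hence $\divr(-\Delta)^{-1}S=(-\Delta)^{-1}\divr S=-\tfrac12 u$, and applying $\nabla_{sym}$ and using linearity yields $\nabla_{sym}\divr(-\Delta)^{-1}S=-\tfrac12\nabla_{sym}u=-\tfrac12 S$, which rearranges to the second stated condition.

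For the ``if'' direction, suppose $S\in\mathcal{L}^2$ is trace-free and $S+2\nabla_{sym}\divr(-\Delta)^{-1}S=0$, and set $u:=-2\divr(-\Delta)^{-1}S$. Since $S\in\mathcal{L}^2$ we have $\divr S\in\dot{\mathbf{H}}^{-1}$, so $u=2(-\Delta)^{-1}\divr S\in\dot{\mathbf{H}}^1$, and $\nabla_{sym}u=-2\nabla_{sym}\divr(-\Delta)^{-1}S=S$ by hypothesis. The one remaining point is $\divr u=0$, and this is where the constraint must be used in a slightly nonobvious way: apply $\divr\divr$ to the identity. A short multiplier computation gives $\divr\divr\bigl(\nabla_{sym}\divr(-\Delta)^{-1}S\bigr)=-\divr\divr S$, so the constraint forces $\divr\divr S=0$, and therefore $\divr u=-2\divr\divr(-\Delta)^{-1}S=-2(-\Delta)^{-1}\divr\divr S=0$. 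Hence $u\in\dot{\mathbf{H}}^1_{df}$ and $S=\nabla_{sym}u\in\mathcal{L}^2_{st}$.

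The algebra is all routine once everything is set up on the Fourier side, where $\widehat{\nabla_{sym}\divr(-\Delta)^{-1}S}(\xi)=-\tfrac{1}{2|\xi|^2}\bigl(\xi\otimes(\hat S(\xi)\xi)+(\hat S(\xi)\xi)\otimes\xi\bigr)$, so that the constraint reads $\hat S(\xi)=\tfrac{1}{|\xi|^2}\bigl(\xi\otimes(\hat S(\xi)\xi)+(\hat S(\xi)\xi)\otimes\xi\bigr)$ pointwise in $\xi$; one then checks this symbol identity is equivalent to $\hat S(\xi)=\tfrac{i}{2}\bigl(\xi\otimes v(\xi)+v(\xi)\otimes\xi\bigr)$ with $v(\xi)\perp\xi$, $v:=-\tfrac{2i}{|\xi|^2}\hat S(\xi)\xi$. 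I expect the only genuinely delicate points to be (i) making the operator $(-\Delta)^{-1}$ and its mapping property $\dot{\mathbf{H}}^{-1}\to\dot{\mathbf{H}}^1$ precise on the homogeneous scale, and (ii) the observation that the divergence-free property of the recovered $u$ is invisible in the constraint as stated and must be extracted by testing the symbol against $\xi\otimes\xi$ (equivalently, by the $\divr\divr$ step above). That same computation in fact shows that $\tr(S)=0$ is already a consequence of the second identity, although it is natural to keep both hypotheses in the statement.
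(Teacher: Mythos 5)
Your proof is correct. Note that the paper you are being compared against does not actually prove Theorem \ref{CharacterizationARMA}; it is quoted from the first author's earlier work \cite{MillerStrain}, so there is no in-paper argument to measure yours against. Taken on its own terms, your argument is complete: the ``only if'' direction via $\tr(\nabla_{sym}u)=\nabla\cdot u$ and $\divr(\nabla_{sym}u)=\tfrac12\Delta u$ for divergence-free $u$ is exactly right, and in the ``if'' direction the key step --- recovering $\nabla\cdot u=0$ for $u=-2\divr(-\Delta)^{-1}S$ by applying $\divr\divr$ to the constraint, which forces $\divr^2 S=0$ --- is the correct way to extract the divergence-free property, and it is consistent with the Fourier-side picture in Proposition \ref{FourierConstraint} (the constraint forces $[\widehat S]_{\mathcal B(\xi)}$ to have vanishing $(1,1)$ entry, which is precisely $\xi^{T}\widehat S(\xi)\xi=0$). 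Your side observation that $\tr(S)=0$ is already implied by the second identity is also correct: the symbol identity gives $\xi^{T}\widehat S\xi=2\xi^{T}\widehat S\xi$, hence $\xi^{T}\widehat S\xi=0$, and then $\tr\widehat S=\tfrac{2}{|\xi|^2}\xi\cdot(\widehat S\xi)=0$. The only points needing care are the ones you already flag: interpreting $\divr(-\Delta)^{-1}$ as the degree $-1$ homogeneous multiplier $\dot{\mathbf H}^{-1}\to\dot{\mathbf H}^{1}$ composed appropriately, so that $u\in\dot{\mathbf H}^1$ and the composite $\nabla_{sym}\divr(-\Delta)^{-1}$ is a bounded (degree $0$) operator on $\mathcal L^2$.
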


While Theorem \ref{CharacterizationARMA} gives a complete characterization of the strain constraint space $\mathcal{L}^2_{st}$, it does not provide a decomposition of the space of symmetric matrices equivalent to the Helmholtz decomposition. The Helmholtz decomposition, which was also developed for applications in fluid mechanics as well as electromagnetism, states that any vector field can be written as the sum of a gradient and a divergence free vector field. In particular, Helmholtz proved in \cite{Helmholtz} that 
if $u\in C^1\left(\mathbb{R}^3;\mathbb{R}^3\right)$,
then $u$ is the sum of a gradient and a curl, with
\begin{equation}
    u=-\nabla \phi +\nabla \times v.
\end{equation}
This was first proven in 1858 when the notion of infinite dimensional Hilbert spaces and in particular the Lebesgue integral \cite{Lebesgue} did not yet exist, as both would revolutionize analysis 
in the early 20\textsuperscript{th} century.
The modern statement of the Helmholtz decomposition is given below.

\begin{theorem} \label{HelmholtzOld}
The space of square integrable vector fields 
$L^2\left(\mathbb{R}^3;\mathbb{R}^3\right)$ has the following orthogonal decomposition.
\begin{equation}
    \mathbf{L}^2=
    \mathbf{L}^2_{df} \oplus \mathbf{L}^2_{gr}.
\end{equation}
\end{theorem}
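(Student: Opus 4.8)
The plan is to establish the decomposition by means of the Fourier transform and the associated Leray--Helmholtz projection. Throughout, $\mathbf{L}^2_{gr}$ denotes the closure in $\mathbf{L}^2$ of the gradients $\nabla\phi$ of test functions $\phi\in C_c^\infty(\mathbb{R}^3)$ (equivalently, the set of $\nabla\phi$ with $\phi\in\dot{\mathbf{H}}^1$), and $\mathbf{L}^2_{df}$ denotes the set of $u\in\mathbf{L}^2$ with $\divr u=0$ in the sense of distributions. Orthogonality is the easy half: for $\phi\in C_c^\infty(\mathbb{R}^3)$ and $v\in\mathbf{L}^2_{df}$, integration by parts gives $\langle\nabla\phi,v\rangle=-\langle\phi,\divr v\rangle=0$, and since such gradients are dense in $\mathbf{L}^2_{gr}$ while the inner product is continuous, we get $\mathbf{L}^2_{gr}\perp\mathbf{L}^2_{df}$.

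The substance is then to show the two spaces span $\mathbf{L}^2$, which I would do on the frequency side, where by Plancherel's theorem the Fourier transform is a unitary map of $\mathbf{L}^2$ onto itself. For $\xi\neq 0$ introduce the complementary orthogonal projections of $\mathbb{C}^3$
\[
    \mathbb{P}_{df}(\xi)=I-\frac{\xi\otimes\xi}{|\xi|^2},
    \qquad
    \mathbb{P}_{gr}(\xi)=\frac{\xi\otimes\xi}{|\xi|^2},
\]
onto $\xi^\perp$ and onto $\mathbb{C}\xi$, and let $P_{df},P_{gr}$ be the corresponding Fourier multiplier operators. Their symbols have norm $\le 1$, so these operators are bounded on $\mathbf{L}^2$; they are moreover self-adjoint, idempotent, and satisfy $P_{df}+P_{gr}=I$, hence $\mathbf{L}^2=\operatorname{ran}P_{gr}\oplus\operatorname{ran}P_{df}$ orthogonally. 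It remains to identify the ranges. If $w=P_{gr}u$, then $\widehat{w}(\xi)=\xi\,\bigl(\xi\cdot\widehat{u}(\xi)\bigr)/|\xi|^2$ is everywhere parallel to $\xi$, so $w=\nabla\phi$ with $\widehat{\phi}(\xi)=-i\,\bigl(\xi\cdot\widehat{u}(\xi)\bigr)/|\xi|^2$ and $|\xi|\widehat{\phi}\in L^2$, i.e.\ $\phi\in\dot{\mathbf{H}}^1$; conversely every $\nabla\phi$ with $\phi$ a test function has Fourier transform parallel to $\xi$ and so is fixed by $P_{gr}$, so (using density of $C_c^\infty$ in $\dot{\mathbf{H}}^1$) $\operatorname{ran}P_{gr}=\mathbf{L}^2_{gr}$. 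Likewise $w\in\operatorname{ran}P_{df}$ iff $\xi\cdot\widehat{w}(\xi)=0$ a.e.\ iff $\widehat{\divr w}=i\xi\cdot\widehat{w}=0$ iff $\divr w=0$, so $\operatorname{ran}P_{df}=\mathbf{L}^2_{df}$. Combining the two identifications yields $\mathbf{L}^2=\mathbf{L}^2_{gr}\oplus\mathbf{L}^2_{df}$ orthogonally.

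The main point requiring care — rather than a genuine obstacle — is that the potential $\phi$ recovered above need not lie in $\mathbf{L}^2(\mathbb{R}^3)$; only $\nabla\phi$ does. One must therefore phrase the gradient part in terms of the homogeneous space $\dot{\mathbf{H}}^1$, check that the multiplier $\xi\mapsto\xi/|\xi|^2$ applied to $\widehat{u}\in L^2$ yields, after multiplication by $|\xi|$, a bona fide $L^2$ function (it does, since $|\xi\otimes\xi|/|\xi|^2\le 1$), and verify that $\phi\mapsto\nabla\phi$ is an isometry of $\dot{\mathbf{H}}^1$ onto $\mathbf{L}^2_{gr}$. The singularity of the symbols at $\xi=0$ is harmless, being supported on a null set. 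Everything else is routine once the two summands are pinned down precisely.
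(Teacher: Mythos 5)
Your proof is correct. Note that the paper itself gives no proof of Theorem~\ref{HelmholtzOld}; it is stated as a classical background fact. That said, your Fourier-multiplier argument---defining the complementary projection symbols $\xi\otimes\xi/|\xi|^2$ and $I-\xi\otimes\xi/|\xi|^2$, observing they are bounded, self-adjoint, idempotent, and sum to the identity, and then identifying the ranges---is exactly the scalar/vector version of the method the paper uses for its new matrix-valued decomposition (Proposition~\ref{FourierConstraint} and Theorem~\ref{HelmholtzOne}, where for each fixed $\xi$ one chooses an adapted orthonormal basis and decomposes $\widehat{M}(\xi)$ pointwise in frequency). Your definitions of $\mathbf{L}^2_{df}$ and $\mathbf{L}^2_{gr}$, phrased via distributional divergence and closure of gradients of test functions, match the paper's Definition~\ref{BigDef} (where $\nabla\cdot(-\Delta)^{-1/2}u=0$ is equivalent to $\xi\cdot\widehat{u}=0$ a.e., and $\nabla(-\Delta)^{-1/2}$ is the isometry of $L^2$ onto $\dot{\mathbf{H}}^1$), so there is no mismatch of conventions. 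You also correctly flag the one genuine subtlety, that the scalar potential $\phi$ need not lie in $L^2$ and the gradient part must be phrased in $\dot{\mathbf{H}}^1$, and handle it by checking $|\xi|\,|\widehat{\phi}(\xi)|\le|\widehat{u}(\xi)|$ so $\phi\in\dot{\mathbf{H}}^1$. Nothing is missing.
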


The Helmholtz decomposition for vector fields tells us that any vector field can be written as the sum of a gradient and a divergence free vector field.
We will prove an analogous result that any symmetric matrix valued function can be written as the sum of a Hessian, a strain matrix, and a divergence free matrix.

\begin{theorem} \label{HelmholtzOneIntro}
The space of square integrable symmetric matrices
$L^2\left(\mathbb{R}^3;\mathcal{S}^{3\times 3}\right)$
has the following orthogonal decomposition.
\begin{equation}
\mathcal{L}^2=
\mathcal{L}^2_{st} \oplus \mathcal{L}^2_{Hess} 
\oplus \mathcal{L}^2_{divfree}.
\end{equation}
\end{theorem}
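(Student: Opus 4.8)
\emph{Sketch of the proof.} The plan is to pass to the Fourier side, where the three subspaces become pointwise-defined subspaces of $\mathcal{S}^{3\times 3}$ depending only on the frequency direction $e=\xi/|\xi|$, and then to reassemble by Plancherel's theorem. For a unit vector $e$ set
\begin{align*}
H_e &= \{\lambda\, e\otimes e : \lambda\in\mathbb{R}\}, &
St_e &= \{e\otimes w + w\otimes e : w\cdot e = 0\}, &
DF_e &= \{A\in\mathcal{S}^{3\times 3} : Ae=0\},
\end{align*}
of dimensions $1$, $2$ and $3$. A one-line computation with $\langle A,B\rangle=\sum_{i,j}A_{ij}B_{ij}$ shows these are pairwise orthogonal --- indeed $\langle e\otimes e,\,e\otimes w+w\otimes e\rangle=2(e\cdot w)$, $\langle e\otimes e,\,A\rangle=e\cdot(Ae)$ and $\langle e\otimes w+w\otimes e,\,A\rangle=2\,w\cdot(Ae)$ --- and since $1+2+3=6=\dim\mathcal{S}^{3\times 3}$ we get $\mathcal{S}^{3\times 3}=H_e\oplus St_e\oplus DF_e$ for every $e$, with explicit orthogonal projections $\Pi_H(e)A=(e\cdot Ae)\,e\otimes e$, $\Pi_{st}(e)A=e\otimes\big((I-e\otimes e)Ae\big)+\big((I-e\otimes e)Ae\big)\otimes e$ and $\Pi_{DF}(e)=I-\Pi_H(e)-\Pi_{st}(e)$, all homogeneous of degree $0$ and smooth in $e\in\mathbb{S}^2$.

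The next step is to match these families, frequency by frequency, with the three named subspaces. From
\begin{equation*}
\widehat{\Hess\phi}(\xi)=-|\xi|^2\widehat\phi(\xi)\,e\otimes e,\qquad
\widehat{\nabla_{sym}u}(\xi)=\tfrac{i}{2}|\xi|\big(e\otimes\widehat u(\xi)+\widehat u(\xi)\otimes e\big),\qquad
\widehat{\divr S}(\xi)=i\,\widehat S(\xi)\xi,
\end{equation*}
one reads off that $\widehat S(\xi)\in H_e$ a.e.\ iff $S\in\mathcal{L}^2_{Hess}$; that $\widehat S(\xi)\in St_e$ a.e.\ iff $S\in\mathcal{L}^2_{st}$ (equivalently $S=\nabla_{sym}u$ for some divergence-free $u$, since the part of $\widehat u$ parallel to $\xi$ is exactly what contributes the $H_e$-component of $\widehat{\nabla_{sym}u}$); and that $\widehat S(\xi)\in DF_e$ a.e.\ iff $\divr S=0$, i.e.\ $S\in\mathcal{L}^2_{divfree}$. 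Hence the bounded measurable multipliers $\widehat S(\xi)\mapsto\Pi_\bullet(\xi/|\xi|)\widehat S(\xi)$ are precisely the orthogonal projections of $\mathcal{L}^2$ onto the three closed subspaces, and Plancherel promotes the pointwise orthogonal splitting of $\mathcal{S}^{3\times 3}$ to the asserted decomposition of $\mathcal{L}^2$. (The same identity also shows $u\mapsto\nabla_{sym}u$ is $1/\sqrt2$ times an isometry $\dot{\mathbf H}^1_{df}\to\mathcal{L}^2$, so $\mathcal{L}^2_{st}$ is genuinely closed; one works throughout with the complexified space $\mathcal{S}^{3\times 3}\otimes\mathbb{C}$, on which the same real subspaces complexified remain mutually orthogonal.)

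Two equivalent routes are worth recording. Paralleling Theorem~\ref{CharacterizationARMA}, the bounded self-adjoint operator $A:=\nabla_{sym}\divr(-\Delta)^{-1}$ on $\mathcal{L}^2$ has symbol $\widehat S(\xi)\mapsto-\tfrac12\big(e\otimes\widehat S(\xi)e+\widehat S(\xi)e\otimes e\big)$, and the computation above shows it acts as the scalar $-1$ on $\mathcal{L}^2_{Hess}$, as $-\tfrac12$ on $\mathcal{L}^2_{st}$ (which is the content of the identity $S+2\nabla_{sym}\divr(-\Delta)^{-1}S=0$), and as $0$ on $\mathcal{L}^2_{divfree}$; its spectral projections, each a quadratic polynomial in $A$, then give the three orthogonal projections without reference to multipliers. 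Alternatively one can produce the decomposition constructively in physical space: apply the classical Helmholtz decomposition to $\divr S\in\dot{\mathbf H}^{-1}$ to write $\divr S=v+\nabla h$ with $\divr v=0$, put $S_{st}=-2\nabla_{sym}(-\Delta)^{-1}v\in\mathcal{L}^2_{st}$ and $S_{Hess}=-\Hess(-\Delta)^{-1}h\in\mathcal{L}^2_{Hess}$ (using $\divr\nabla_{sym}u=\tfrac12\Delta u$ for divergence-free $u$ and $\divr\Hess\phi=\nabla\Delta\phi$), observe that $S-S_{st}-S_{Hess}$ is divergence free, and verify mutual orthogonality by three short integrations by parts.

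The linear algebra and the integrations by parts are routine; the step demanding genuine care --- and the expected main obstacle --- is the functional-analytic bookkeeping behind the phrase ``closed subspace'': fixing the homogeneous potential spaces $\dot{\mathbf H}^1_{df}$ and $\dot H^2$ and the action of $(-\Delta)^{-1}$ on them, checking that $\mathcal{L}^2_{st}$ and $\mathcal{L}^2_{Hess}$ as defined coincide with their Fourier-side descriptions so that the low-frequency behaviour causes no trouble (precisely the subtlety that the trace condition in Theorem~\ref{CharacterizationARMA} is navigating), and confirming that the degree-$0$ symbols $\Pi_\bullet(\xi/|\xi|)$, though merely measurable across $\xi=0$, are bounded on $L^2$ so that Plancherel applies. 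With the spaces set up as in the earlier sections, each of these checks is standard.
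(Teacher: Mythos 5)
Your proposal is correct and follows essentially the same route as the paper: a pointwise orthogonal splitting of $\mathcal{S}^{3\times 3}$ at each frequency direction $\xi/|\xi|$ into the one-, two-, and three-dimensional pieces corresponding to Hessians, strains, and divergence-free matrices, matched to the named subspaces via the Fourier symbols and reassembled by Plancherel (the paper writes these subspaces as explicit coordinate matrices in an adapted basis $\mathcal{B}(\xi)$, you write them basis-free, which is cosmetic). The two alternative routes you sketch (the spectral decomposition of $\nabla_{sym}\divr(-\Delta)^{-1}$ and the constructive physical-space argument) are sound and go beyond what the paper records, but the core argument is the same.
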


This orthogonal decomposition is one of the central results of this paper. Before we can discuss this result further it is necessary to define the divergence operator for vectors and matrices, and also to define a great number of spaces. For a vector-valued function $v\in \mathbb{R}^d$, the divergence is a scalar, given by 
\begin{equation}
    \nabla \cdot v= \sum_{i=1}^d \partial_i v_i.
\end{equation}
For a matrix-valued function $M\in \mathbb{R}^{d\times d}$, the divergence is a vector-valued function $\divr(M)\in \mathbb{R}^d$ with components
\begin{equation}
    \divr(M)_j=\sum_{i=1}^d \partial_i M_{ij}.
\end{equation}
We will generally use the notation $\divr$ to refer to the divergence of a matrix, and the notation $\nabla \cdot$ to refer to the divergence of a vector field. We will refer to the divergence squared of a matrix as
\begin{equation}
    \divr^2(M)=\nabla \cdot \divr(M)
    =\sum_{i,j=1}^d \partial_i \partial_j M_{ij}
\end{equation}
For matrices $M,Q\in \mathbb{R}^{3\times 3}$,
we will define the matrix inner product by
\begin{equation}
    M \odot Q
    =
    \sum_{i,j=1}^3
    M_{ij}Q_{ij}.
\end{equation}

Throughout this paper, we will use the convention that $L^{2}$, $\mathbf{L}^{2}$ and $\mathcal{L}^{2}$ are the space of square integrable scalars, vectors and symmetric matrices respectively on $\mathbb{R}^{3}$
with respect to Lebesgue measure.
We will also use the convention that 
\begin{equation}
  \mathcal{S}^{3\times 3}=\left\{S\in \mathbb{R}^{3\times 3}: S_{ij}=S_{ji}, 1\leq i,j \leq 3\right\}  
\end{equation} 
will refer to the space of symmetric, real-valued matrices.
The main spaces of symmetric matrices arising in our study of the structure of the strain constraint space are strain matrices,
Hessians, scalar multiples of the identity matrix, trace free matrices, divergence free matrices, and divergence squared free matrices. We will first define the space of divergence free vector fields and the space of gradients, and then proceed to define each of these spaces of matrices.

\begin{definition} \label{BigDef}
We will now define the spaces of matrices listed above, as well as an ``adjusted'' identity space, and the space of gradients and divergence free vector fields.

    \begin{equation}
    L^2=L^2\left(\mathbb{R}^3;\mathbb{R}\right)    
    \end{equation}
    
    \begin{equation}
        \mathbf{L}^2=L^2\left(\mathbb{R}^3;\mathbb{R}^3\right).
    \end{equation}
    
    \begin{equation}
        \mathbf{L}^2_{df}=\left\{u\in \mathbf{L}^2: \nabla \cdot (-\Delta)^{-\frac{1}{2}}u=0\right\}.
    \end{equation}
    
    \begin{equation}
        \mathbf{L}^2_{gr}=\left\{\nabla (-\Delta)^{-\frac{1}{2}}f: f\in L^2\right\}.
    \end{equation}
    
\begin{equation}
    \mathcal{L}^2= L^2\left(\mathbb{R}^3;
    \mathcal{S}^{3\times 3}\right).
\end{equation}
    
\begin{equation}
    \mathcal{L}^2_{st}=\left\{\nabla_{sym}
    (-\Delta)^{-\frac{1}{2}}u: u\in\mathbf{L}^2_{df}\right\}.
\end{equation}

\begin{equation}
    \mathcal{L}^2_{Hess}=\left\{\Hess(-\Delta)^{-1}f:
    f\in L^2\right\}.
\end{equation}

\begin{equation}
    \mathcal{L}^2_{Id}=\left\{f I_3:
    f\in L^2\right\}.
\end{equation}

\begin{equation}
    \mathcal{L}^2_{\widetilde{Id}}=\left\{fI_3
    +\Hess(-\Delta)^{-1}f:
    f\in L^2\right\}.
\end{equation}

\begin{equation}
     \mathcal{L}^2_{divfree} = \ker(\divr)=
     \left\{M\in\mathcal{L}^2: 
     \divr(-\Delta)^{-\frac{1}{2}}M=0\right\}.
\end{equation}

\begin{equation}
     \mathcal{L}^2_{div^2free} = \ker\left(\divr^{2}\right)
     = \left\{M\in\mathcal{L}^2: 
     \divr^2(-\Delta)^{-1}M=0\right\}.
\end{equation}

\begin{equation}
    \mathcal{L}^2_{trfree}=\ker(\tr)=
    \left\{M\in\mathcal{L}^2: 
     \tr(M)=0 \right\}.
\end{equation}

\begin{equation}
    \mathcal{L}^2_{tr\&divfree}=\ker(\tr) \cap \ker(\divr)
    =\left\{M\in\mathcal{L}^2: \tr(M)=0, 
    \divr (-\Delta)^{-\frac{1}{2}}M=0\right\}.
\end{equation}

We will also define homogeneous Sobolev norm,
for all $s>-\frac{3}{2}$, by
\begin{equation}
    \|f\|_{\dot{H}^s}
    =
    \left(\int_{\mathbb{R}^3}
    \left(4\pi^2|\xi|^2\right)^s
    |\hat{f}(\xi)|^2 \diff\xi
    \right)^\frac{1}{2},
\end{equation}
and we will define the homogeneous Sobolev space $\dot{H}^s\left(\mathbb{R}^3\right)$ as the closure of $C_c^\infty\left(\mathbb{R}^3\right)$ with respect to this norm.
Furthermore, we will define the subspaces $\dot{\mathbf{H}}^s_{df}, \dot{\mathbf{H}}^s_{gr}, \dot{\mathcal{H}}^s_{st},$ etc. analogously to the corresponding $L^2$ definitions above.
\end{definition}

We introduce the space $\mathcal{L}^2_{\widetilde{Id}}$
in Definition \ref{BigDef}, 
because we are trying to form an orthogonal decomposition for $\mathcal{L}^2$, and $\mathcal{L}^2_{Hess}$ and $\mathcal{L}^2_{Id}$ are both natural subspaces to consider, however they are not orthogonal to each other. The space $\mathcal{L}^2_{\widetilde{Id}}$ is the space of $L^2$ scalar multiples of the identity matrix projected to be orthogonal to $\mathcal{L}^2_{Hess}$,
and in fact we will show that
\begin{equation}
        \mathcal{L}^2_{\widetilde{Id}}=P_{Hess}^\perp 
        \mathcal{L}^2_{Id}.
\end{equation}

In previous work on the Navier--Stokes strain evolution equation \cite{MillerStrain}, the first author showed that identities and Hessians are both orthogonal to the space of strain matrices, with
\begin{equation}
    \mathcal{L}^2_{Hess}, \mathcal{L}^2_{Id}
    \subset \left(\mathcal{L}^2_{st}\right)^\perp.
\end{equation}
Using the adjusted space $\mathcal{L}^2_{\widetilde{Id}}$ this can be restated as
\begin{equation}
    \mathcal{L}^2_{Hess} \oplus
    \mathcal{L}^2_{\widetilde{Id}}
    \subset
    \left(\mathcal{L}^2_{st}\right)^\perp.
\end{equation}
One natural question to ask, then, is whether or not the direct sum of the Hessians and adjusted identities gives us the entire orthogonal complement of the strain space. We will show that it does not in $\mathbb{R}^3$, although it turns out this direct sum does give the entire complement when working in $\mathbb{R}^2$, which we will discuss later. We see this by further breaking down the orthogonal decomposition.

\begin{theorem} \label{HelmholtzTwoIntro}
We have the further orthogonal decomposition
\begin{equation}
    \mathcal{L}^2_{divfree}
    =
    \mathcal{L}^2_{\widetilde{Id}} \oplus 
    \mathcal{L}^2_{tr\&divfree},
\end{equation}
and therefore the entire space $\mathcal{L}^2$ has the orthogonal decomposition
\begin{equation}
    \mathcal{L}^2=
    \mathcal{L}^2_{st} \oplus \mathcal{L}^2_{Hess} 
    \oplus L^2_{\widetilde{Id}} \oplus \mathcal{L}^2_{tr\&divfree}.
\end{equation}
\end{theorem}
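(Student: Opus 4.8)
The plan is to prove the splitting $\mathcal{L}^2_{divfree}=\mathcal{L}^2_{\widetilde{Id}}\oplus\mathcal{L}^2_{tr\&divfree}$ directly, by exhibiting the two component projections explicitly, and then to deduce the four‑fold decomposition of $\mathcal{L}^2$ by substituting this into the three‑fold decomposition $\mathcal{L}^2=\mathcal{L}^2_{st}\oplus\mathcal{L}^2_{Hess}\oplus\mathcal{L}^2_{divfree}$ of Theorem~\ref{HelmholtzOneIntro}. For $M\in\mathcal{L}^2_{divfree}$ the candidate splitting is
\begin{equation}
    M=\left(fI_3+\Hess(-\Delta)^{-1}f\right)+N,\qquad f:=\tfrac{1}{2}\tr M\in L^2,\qquad N:=M-fI_3-\Hess(-\Delta)^{-1}f,
\end{equation}
and the whole argument reduces to checking that the first summand lies in $\mathcal{L}^2_{\widetilde{Id}}$ (immediate), the second lies in $\mathcal{L}^2_{tr\&divfree}$, and the two summands are orthogonal.

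The computations rest on two elementary identities, which hold for every $g\in L^2$ and are cleanest to read off on the Fourier side, where $\Hess(-\Delta)^{-1}g$ has the bounded matrix symbol $-\xi_i\xi_j|\xi|^{-2}\widehat g(\xi)$:
\begin{equation}
    \tr\Hess(-\Delta)^{-1}g=-g,\qquad \divr\Hess(-\Delta)^{-1}g=-\nabla g.
\end{equation}
Since also $\divr(gI_3)=\nabla g$, the second identity gives $\divr\left(gI_3+\Hess(-\Delta)^{-1}g\right)=0$, so $\mathcal{L}^2_{\widetilde{Id}}\subseteq\mathcal{L}^2_{divfree}$, and $\mathcal{L}^2_{tr\&divfree}\subseteq\mathcal{L}^2_{divfree}$ by definition. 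For $N$ as above, the first identity gives $\tr N=\tr M-3f+f=\tr M-2f=0$, and $\divr N=\divr M=0$ because the first summand is divergence free; hence $N\in\mathcal{L}^2_{tr\&divfree}$, and therefore $\mathcal{L}^2_{divfree}=\mathcal{L}^2_{\widetilde{Id}}+\mathcal{L}^2_{tr\&divfree}$. For orthogonality, take $M'=f'I_3+\Hess(-\Delta)^{-1}f'$ in $\mathcal{L}^2_{\widetilde{Id}}$ and $N'\in\mathcal{L}^2_{tr\&divfree}$: then $\langle f'I_3,N'\rangle=\int f'\,\tr N'=0$, while $\langle\Hess(-\Delta)^{-1}f',N'\rangle=-\langle\nabla(-\Delta)^{-1}f',\divr N'\rangle=0$, the integration by parts being rigorously justified on the Fourier side using $\divr N'=0$. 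Thus the sum is orthogonal, and in particular $\mathcal{L}^2_{\widetilde{Id}}\cap\mathcal{L}^2_{tr\&divfree}=\{0\}$.

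To conclude that this is an orthogonal decomposition in the Hilbert‑space sense I still need the two summands to be closed. The space $\mathcal{L}^2_{tr\&divfree}$ is an intersection of kernels of bounded operators, hence closed. For $\mathcal{L}^2_{\widetilde{Id}}$, if $f_nI_3+\Hess(-\Delta)^{-1}f_n\to M$ in $\mathcal{L}^2$, then taking traces and using the first identity gives $2f_n\to\tr M$ in $L^2$, so $f_n\to\tfrac{1}{2}\tr M=:f$ and $M=fI_3+\Hess(-\Delta)^{-1}f\in\mathcal{L}^2_{\widetilde{Id}}$; equivalently, the map $f\mapsto fI_3+\Hess(-\Delta)^{-1}f$ has symbol $\widehat f(\xi)$ times the rank‑two orthogonal projection onto $\xi^{\perp}$, whose Frobenius norm squared is identically $2$, so it is $\sqrt{2}$ times an isometry and has closed range. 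Substituting $\mathcal{L}^2_{divfree}=\mathcal{L}^2_{\widetilde{Id}}\oplus\mathcal{L}^2_{tr\&divfree}$ into Theorem~\ref{HelmholtzOneIntro}, and observing that the remaining pairwise orthogonalities $\mathcal{L}^2_{st}\perp\mathcal{L}^2_{Hess}$, $\mathcal{L}^2_{st}\perp\mathcal{L}^2_{divfree}$, $\mathcal{L}^2_{Hess}\perp\mathcal{L}^2_{divfree}$ are exactly those recorded there, gives $\mathcal{L}^2=\mathcal{L}^2_{st}\oplus\mathcal{L}^2_{Hess}\oplus\mathcal{L}^2_{\widetilde{Id}}\oplus\mathcal{L}^2_{tr\&divfree}$.

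I do not anticipate a substantive obstacle: conceptually the first splitting is just the fibre‑wise decomposition, over the frequency $\xi$, of a symmetric endomorphism of the two‑plane $\xi^{\perp}$ into its trace part (a multiple of the projection onto $\xi^{\perp}$, i.e.\ $\mathcal{L}^2_{\widetilde{Id}}$ on the Fourier side) and its trace‑free part ($\mathcal{L}^2_{tr\&divfree}$). The points needing care are the bookkeeping with the $(-\Delta)^{-1/2}$ and $(-\Delta)^{-1}$ normalizations in Definition~\ref{BigDef}, justifying the integrations by parts via Plancherel, and verifying that $\mathcal{L}^2_{\widetilde{Id}}$ is closed so that the symbol ``$\oplus$'' is literally warranted; the last of these is the most delicate point.
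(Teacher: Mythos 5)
Your proof is correct, and it takes a somewhat different route from the paper's. The paper works entirely on the Fourier side: it first establishes (Proposition \ref{FourierConstrainDivFree}) that $\mathcal{L}^2_{\widetilde{Id}}$ and $\mathcal{L}^2_{tr\&divfree}$ correspond, in the moving basis $\mathcal{B}(\xi)$, to the spans $W_1=\spn\{\frac{1}{\sqrt 2}(\eta\otimes\eta+\mu\otimes\mu)\}$ and $W_2=\spn\{\frac{1}{\sqrt 2}(\eta\otimes\eta-\mu\otimes\mu),\frac{1}{\sqrt 2}(\eta\otimes\mu+\mu\otimes\eta)\}$, and then obtains both the sum decomposition and the orthogonality by performing the finite-dimensional splitting $V_3=W_1\oplus W_2$ pointwise in $\xi$ and integrating. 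You instead exhibit the $\mathcal{L}^2_{\widetilde{Id}}$-component explicitly as $fI_3+\Hess(-\Delta)^{-1}f$ with $f=\frac12\tr M$, and verify membership, orthogonality, and closedness directly from the two symbol identities $\tr\Hess(-\Delta)^{-1}g=-g$ and $\divr\Hess(-\Delta)^{-1}g=-\nabla g$. The underlying mathematics is the same (your closing remark correctly identifies both as the fibrewise trace/trace-free splitting of a symmetric endomorphism of $\xi^\perp$), but your version buys an explicit projection formula onto $\mathcal{L}^2_{\widetilde{Id}}$ within $\mathcal{L}^2_{divfree}$, avoids any dependence on the (non-canonical) choice of $\eta,\mu$, and — unlike the paper — explicitly addresses closedness of the summands via the $\sqrt 2$-isometry $\|fI_3+\Hess(-\Delta)^{-1}f\|_{L^2}^2=2\|f\|_{L^2}^2$, which is needed for the direct-sum claim to be literal. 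The paper's version, in exchange, slots uniformly into the machinery already set up for Theorem \ref{HelmholtzOne} and generalizes mechanically to the $d$-dimensional case of Section \ref{d-DimensionalSection}. One small point of bookkeeping: the paper's divergence-free condition is stated as $\divr(-\Delta)^{-1/2}M=0$ rather than $\divr M=0$; these coincide as conditions (both are $\widehat M(\xi)\xi=0$ a.e.), so your verification goes through, but it is worth saying so explicitly.
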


As an immediate corollary of the orthogonal decompositions provided by Theorem \ref{HelmholtzTwoIntro}, we can describe the orthogonal complement of the strain space as follows.

\begin{corollary} \label{OrthogonalComplementIntro}
The orthogonal complement of the strain constraint space is given by
\begin{align}
    \left(\mathcal{L}^2_{st}\right)^\perp 
    &=
    \mathcal{L}^2_{Hess} \oplus
    \mathcal{L}^2_{\widetilde{Id}} \oplus
    \mathcal{L}^2_{tr\&divfree} \\
    &=
    \mathcal{L}^2_{Hess} 
\oplus \mathcal{L}^2_{divfree}.
\end{align}
\end{corollary}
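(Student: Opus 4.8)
The plan is to deduce the corollary directly from the orthogonal decomposition of Theorem \ref{HelmholtzTwoIntro} together with the elementary fact that if a Hilbert space $H$ splits as an orthogonal direct sum $H = A \oplus B$, then $A^\perp = B$. (Indeed $B \subseteq A^\perp$ by orthogonality, and conversely if $x \in A^\perp$ we write $x = a + b$ with $a \in A$, $b \in B$ and compute $0 = \langle x, a\rangle = \|a\|^2$, so $a = 0$ and $x = b \in B$.)

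First I would invoke Theorem \ref{HelmholtzTwoIntro} to write
\[
\mathcal{L}^2 = \mathcal{L}^2_{st} \oplus \left( \mathcal{L}^2_{Hess} \oplus \mathcal{L}^2_{\widetilde{Id}} \oplus \mathcal{L}^2_{tr\&divfree} \right),
\]
noting that the three spaces in parentheses are mutually orthogonal and each orthogonal to $\mathcal{L}^2_{st}$, so that their sum is an orthogonal direct sum which is moreover orthogonal to $\mathcal{L}^2_{st}$. Applying the fact above with $A = \mathcal{L}^2_{st}$ gives the first displayed equality of the corollary,
\[
\left( \mathcal{L}^2_{st} \right)^\perp = \mathcal{L}^2_{Hess} \oplus \mathcal{L}^2_{\widetilde{Id}} \oplus \mathcal{L}^2_{tr\&divfree}.
\]
Then I would substitute the first identity of Theorem \ref{HelmholtzTwoIntro}, namely $\mathcal{L}^2_{divfree} = \mathcal{L}^2_{\widetilde{Id}} \oplus \mathcal{L}^2_{tr\&divfree}$, into the last two summands to obtain the second displayed equality,
\[
\left( \mathcal{L}^2_{st} \right)^\perp = \mathcal{L}^2_{Hess} \oplus \mathcal{L}^2_{divfree}.
\]

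The only point requiring care is making sure that the subspaces involved are closed in $\mathcal{L}^2$, so that orthogonal complements and orthogonal direct sums behave as expected; but this closedness is already part of the content established in the proofs of Theorems \ref{HelmholtzOneIntro} and \ref{HelmholtzTwoIntro}, where $\mathcal{L}^2_{st}$, $\mathcal{L}^2_{Hess}$, $\mathcal{L}^2_{divfree}$, $\mathcal{L}^2_{\widetilde{Id}}$, and $\mathcal{L}^2_{tr\&divfree}$ are exhibited as ranges of orthogonal projections (for instance $\mathcal{L}^2_{\widetilde{Id}} = P_{Hess}^\perp \mathcal{L}^2_{Id}$, and the analogous Fourier-multiplier descriptions of the others). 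Granting this, there is no real obstacle: the corollary is an immediate consequence of Theorem \ref{HelmholtzTwoIntro}, which is precisely why it is recorded as a corollary rather than proved from scratch.
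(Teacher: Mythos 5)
Your proposal is correct and is essentially the same argument the paper gives: the paper's proof of this corollary simply states that it follows immediately from the orthogonal decompositions in Theorems \ref{HelmholtzOneIntro} and \ref{HelmholtzTwoIntro}, and you have supplied the (standard) Hilbert-space bookkeeping that makes ``immediately'' precise. Your remark about closedness of the subspaces is a reasonable extra caution, and it is indeed covered by the Fourier-multiplier characterizations used in the proofs of those theorems.
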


This corollary will also allow us to provide a distributional definition of $L^2_{st},$ which we had lacked up until this point.

\begin{corollary} \label{Distribution}
Suppose $S\in \mathcal{L}^2$. Then $S\in \mathcal{L}^2_{st}$
if and only if, 
for all $f,g\in C_c^\infty(\mathbb{R}^3)$
\begin{align}
    \left<S,\Hess(f)\right> &=0 \\
    \left<S,g I_3\right> &=0,
\end{align}
and for all $M\in C_c^\infty
\left(\mathbb{R}^3;\mathcal{S}^{3\times 3}\right)$, with
$\tr(M)=0$ and $\divr(M)=0$,
\begin{equation}
    \left<S,M\right>=0.
\end{equation}
\end{corollary}

Consistent with Corollary \ref{Distribution},
we will define the strain space of distributions as follows.

\begin{definition} \label{DistributionDef}
Suppose $S\in \mathcal{D}'
\left(\mathbb{R}^3;\mathcal{S}^{3\times 3}\right)$, where
$\mathcal{D}'\left(\mathbb{R}^3;\mathcal{S}^{3\times 3}\right)$ is the dual space of $C_c^\infty
\left(\mathbb{R}^3;\mathcal{S}^{3\times 3}\right)$.
Then $S\in D'_{st}$ if and only if
for all $f,g\in C_c^\infty(\mathbb{R}^3)$
\begin{align}
    \left<S,\Hess(f)\right> &=0 \\
    \left<S,g I_3\right> &=0,
\end{align}
and for all $M\in C_c^\infty
\left(\mathbb{R}^3;\mathcal{S}^{3\times 3}\right)$, with
$\tr(M)=0$ and $\divr(M)=0$,
\begin{equation}
    \left<S,M\right>=0.
\end{equation}
\end{definition}

We also have another quite nice representation of the space of strain matrices---which follows from our Helmholtz-type decomposition of the space of symmetric matrices in Theorem \ref{HelmholtzOneIntro}---as the direct difference of the kernel of divergence squared and the kernel of divergence. 
In this sense, the strain matrices can be seen as the matrices that are divergence squared free, but do not have any divergence free part.

\begin{theorem} \label{DifferenceThm}
\begin{equation}
\mathcal{L}^2_{st}=\ker\left(\divr^2\right) \ominus \ker(\divr),
\end{equation}
and equivalently
\begin{equation}
    \ker\left(\divr^2\right)= 
    \ker(\divr) \oplus \mathcal{L}^2_{st}.
\end{equation}
\end{theorem}

In this vein, we also have a formula for the size of the projection onto $\mathcal{L}^2_{st}$.
For all $M\in\mathcal{L}^2,$
\begin{equation} \label{CurlFormula}
    \left\|P_{st}(M)\right\|_{L^2}^2=
    2\left\|\nabla\times\divr(-\Delta)^{-1}
    M\right\|_{L^2}^2.
\end{equation}
Crucial to this identity, will be an isometry proven by the first author in \cite{MillerStrain}, which is stated below.
\begin{proposition} \label{isometry}
For all $u\in \mathbf{L}^2_{df},$
\begin{equation}
    \left\|\nabla_{sym}(-\Delta)^{-\frac{1}{2}}
    u\right\|_{L^2}^2
    =
    \frac{1}{2}\|u\|_{L^2}^2
    =
    \frac{1}{2}\left\|\nabla\times(-\Delta)^{-\frac{1}{2}}
    u\right\|_{L^2}^2.
\end{equation}
\end{proposition}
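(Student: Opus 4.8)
The statement is an isometry recalled from \cite{MillerStrain}; the plan is to reduce each side to an elementary identity in frequency space and then use the divergence-free constraint to make the ``error'' terms vanish.

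First I would record that, by Plancherel, for a vector field $v$ with Fourier transform $\widehat{v}$,
\[
\left\|\nabla_{sym} v\right\|_{L^2}^2 = \frac14\int_{\mathbb{R}^3}\left|\xi\otimes\widehat{v}(\xi)+\widehat{v}(\xi)\otimes\xi\right|^2\diff\xi,
\qquad
\left\|\nabla\times v\right\|_{L^2}^2 = \int_{\mathbb{R}^3}\left|\xi\times\widehat{v}(\xi)\right|^2\diff\xi.
\]
Expanding the Frobenius norm and using that $\xi$ is real gives the pointwise identity $\left|\xi\otimes\widehat{v}+\widehat{v}\otimes\xi\right|^2 = 2|\xi|^2|\widehat{v}|^2 + 2|\xi\cdot\widehat{v}|^2$, while the vector triple-product identity gives $\left|\xi\times\widehat{v}\right|^2 = |\xi|^2|\widehat{v}|^2 - |\xi\cdot\widehat{v}|^2$.

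Next I would apply this with $v=(-\Delta)^{-\frac12}u$, so that $\widehat{v}(\xi)=|\xi|^{-1}\widehat{u}(\xi)$. The hypothesis $u\in\mathbf{L}^2_{df}$ is precisely the statement that $\xi\cdot\widehat{u}(\xi)=0$ for a.e.\ $\xi$, hence $\xi\cdot\widehat{v}(\xi)=0$ as well, which kills both occurrences of the term $|\xi\cdot\widehat{v}|^2$; moreover $|\xi|^2|\widehat{v}(\xi)|^2 = |\widehat{u}(\xi)|^2$. Substituting into the two displayed formulas and integrating yields $\left\|\nabla_{sym}(-\Delta)^{-\frac12}u\right\|_{L^2}^2 = \tfrac12\|u\|_{L^2}^2$ and $\left\|\nabla\times(-\Delta)^{-\frac12}u\right\|_{L^2}^2 = \|u\|_{L^2}^2$, which is exactly the pair of equalities asserted.

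The same computation can be run by integration by parts in physical space: for Schwartz $v$ one expands $\left\|\nabla_{sym}v\right\|^2$ and $\left\|\nabla\times v\right\|^2$, integrates by parts once in each cross term to obtain (up to sign) $\int v\cdot\nabla(\nabla\cdot v)$, and then specializes to $v=(-\Delta)^{-\frac12}u$, using $\nabla\cdot v = (-\Delta)^{-\frac12}(\nabla\cdot u) = 0$ and $\left\|\nabla v\right\|_{L^2}^2 = \left\|(-\Delta)^{\frac12}v\right\|_{L^2}^2 = \|u\|_{L^2}^2$. I do not expect a genuine obstacle: the arithmetic is routine, and the only point needing a word of care is that $u\in\mathbf{L}^2_{df}$ need not be smooth or decaying, so the physical-space version calls for a density argument (Schwartz solenoidal fields are dense in $\mathbf{L}^2_{df}$, and $\nabla_{sym}(-\Delta)^{-\frac12}$ and $\nabla\times(-\Delta)^{-\frac12}$ are bounded on $L^2$, being finite combinations of Riesz transforms)---an issue that does not arise at all if one argues on the Fourier side throughout, which is the route I would take.
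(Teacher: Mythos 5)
Your argument is correct. The paper does not actually prove Proposition \ref{isometry}---it is imported from \cite{MillerStrain} as a known result---so there is no in-paper proof to compare against, but your Plancherel computation is exactly the calculation that underlies the paper's Fourier-side framework: in the proof of Proposition \ref{FourierConstraint} the identity $\left|\hat{u}(\xi)\right|^2=2\left|\widehat{M}(\xi)\right|^2$ for $M=\nabla_{sym}(-\Delta)^{-\frac{1}{2}}u$ is precisely your first equality, obtained by the same expansion of $\left|\xi\otimes\widehat{u}+\widehat{u}\otimes\xi\right|^2$ with the cross terms killed by $\xi\cdot\widehat{u}=0$. Your closing remarks are also well taken: the Fourier-side route avoids the density/regularity issue entirely, and the $2\pi$ normalization factors from $\widehat{\partial_j}$ and from the symbol of $(-\Delta)^{-\frac{1}{2}}$ cancel, so the constants come out as claimed.
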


Finally, we will also note that the space $\mathcal{L}^2_{st}$ has a natural rotation invariance.
In Chapter 1 of \cite{MajdaBertozzi}, Majda and Bertozzi remark that the space of divergence free vector fields has a rotational invariance, and apply this rotational invariance to the Euler and Navier--Stokes equations, stating the following.

\begin{proposition}
For all $u\in \mathbf{\dot{H}}^1_{df},$ and for all $Q\in SO(3)$, $u^Q \in \mathbf{\dot{H}}^1_{df},$ where
\begin{equation}
    u^Q(x,t)=Q^{tr}u\left(Qx,t \right).
\end{equation}
\end{proposition}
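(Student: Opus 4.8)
The plan is to verify the two defining properties of $\mathbf{\dot{H}}^1_{df}$ for $u^Q$ directly, using only the chain rule together with the orthogonality relations $Q^{tr}Q = QQ^{tr} = I_3$. Since the map $u \mapsto u^Q$ will turn out to be an isometry on $\mathbf{\dot{H}}^1$, it suffices to carry out the computation for $u$ in a dense subspace — say Schwartz functions — and then pass to the limit; so I would first record this reduction to the smooth case, where the chain rule and the change of variables are unambiguous.

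The first step is the $\mathbf{\dot{H}}^1$ bound. Writing $u^Q$ componentwise as $(u^Q)_i(x) = \sum_j Q_{ji}\, u_j(Qx)$ and differentiating, the chain rule gives $\partial_k (u^Q)_i(x) = \sum_{j,l} Q_{ji}\, Q_{lk}\, (\partial_l u_j)(Qx)$, that is, in matrix form $\nabla u^Q(x) = Q^{tr}\,(\nabla u)(Qx)\,Q$. Conjugation by an orthogonal matrix preserves the Frobenius norm, so $|\nabla u^Q(x)|^2 = |(\nabla u)(Qx)|^2$ pointwise; integrating in $x$ and substituting $y = Qx$, with $|\det Q| = 1$, gives $\|\nabla u^Q\|_{L^2} = \|\nabla u\|_{L^2}$, hence $u^Q \in \mathbf{\dot{H}}^1$.

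The second step is the divergence. Contracting the index identity above with $k = i$ and summing over $i$,
\begin{equation}
    \nabla \cdot u^Q(x) = \sum_{i,j,l} Q_{ji}\, Q_{li}\, (\partial_l u_j)(Qx) = \sum_{j,l} (QQ^{tr})_{jl}\, (\partial_l u_j)(Qx) = \sum_j (\partial_j u_j)(Qx) = (\nabla \cdot u)(Qx),
\end{equation}
which vanishes because $u \in \mathbf{\dot{H}}^1_{df}$. Therefore $u^Q \in \mathbf{\dot{H}}^1_{df}$; the time variable $t$ is inert and is simply carried along as a parameter.

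I do not expect a genuine obstacle: the argument is index bookkeeping, the only point worth emphasizing being \emph{why} the transpose appears in the definition of $u^Q$ — it is exactly the device that makes the two factors of $Q$ in $\nabla u^Q = Q^{tr}(\nabla u)(Q\,\cdot)\,Q$ collapse to a Kronecker delta when the divergence is taken, so the rotated field inherits divergence-freeness. The analogue motivating this remark, that $\mathcal{L}^2_{st}$ is invariant under $S \mapsto Q^{tr} S(Q\,\cdot)\,Q$, then follows by applying the same identity to $\nabla_{sym}$: symmetrizing $\nabla u^Q = Q^{tr}(\nabla u)(Q\,\cdot)\,Q$ gives $\nabla_{sym} u^Q(x) = Q^{tr}\,(\nabla_{sym} u)(Qx)\,Q$, and since $(-\Delta)^{-1/2}$ is a rotation-invariant Fourier multiplier it commutes with this action, so $S \in \mathcal{L}^2_{st}$ forces its rotate to lie in $\mathcal{L}^2_{st}$ as well.
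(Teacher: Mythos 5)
Your proof is correct and follows essentially the same route as the paper, which establishes this fact inside the proof of its Proposition on the rotational invariance of $\mathcal{L}^2_{st}$: the chain rule gives $\nabla u^Q(x)=Q^{tr}\,(\nabla u)(Qx)\,Q$, orthogonal conjugation preserves the Frobenius norm (hence the $\dot{\mathbf{H}}^1$ norm after the change of variables $y=Qx$), and the divergence is the trace of this conjugated gradient, which equals $(\nabla\cdot u)(Qx)=0$. Your index contraction with $QQ^{tr}=I_3$ is just the trace-invariance argument written out componentwise, and your closing remark on $\nabla_{sym}u^Q=Q^{tr}(\nabla_{sym}u)(Q\,\cdot)\,Q$ is exactly how the paper deduces the strain-space analogue.
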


For strain matrices we have an analogous rotational invariance, although in this case the form of the rotation invariance is even more canonical, because the invariance involves rotating the domain and conjugating by the same rotation in the range. This has a nicer algebraic structure than the corresponding vector result, because of the additional algebraic structure of symmetric matrices.

\begin{proposition} \label{RotInvarIntro}
For all $S\in\mathcal{L}^2_{st}$, and for all
rotation matrices $Q\in SO(3),$ $S^Q \in\mathcal{L}^2_{st}$,
where
\begin{equation}
    S^Q(x)= Q^{tr} S(Qx) Q.
\end{equation}
\end{proposition}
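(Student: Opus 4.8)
The plan is to verify directly that the conjugated field $S^Q$ satisfies the characterization of $\mathcal{L}^2_{st}$ given in Theorem \ref{CharacterizationARMA}, namely that $\tr(S^Q) = 0$ and $S^Q + 2\nabla_{sym}\divr(-\Delta)^{-1}S^Q = 0$. Since $S \in \mathcal{L}^2_{st}$, we know $\tr(S) = 0$, and because $Q \in SO(3)$ is orthogonal, $\tr(Q^{tr}S(Qx)Q) = \tr(S(Qx)) = 0$ pointwise, so the trace-free condition is immediate. It remains to check the second identity, and for this the key point is to track how the operators $\nabla_{sym}$, $\divr$, and $(-\Delta)^{-1}$ transform under the combined action of rotating the domain by $Q$ and conjugating the matrix by $Q$.

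The main computation proceeds by establishing the intertwining relations for each operator. First, $(-\Delta)^{-1}$ commutes with domain rotations: if $g(x) = f(Qx)$ then $(-\Delta)^{-1}g(x) = \big((-\Delta)^{-1}f\big)(Qx)$, since the Laplacian is rotation invariant. Next, for the divergence of a matrix field, a direct chain-rule computation should give $\divr\big(Q^{tr}M(Qx)Q\big) = Q^{tr}\,(\divr M)(Qx)$; the $Q$ on the right of $M$ contracts against the differentiation index, the rotation of the domain contributes a $Q$ via the chain rule which cancels, and the left $Q^{tr}$ survives, so divergence of a conjugated matrix is the rotated-and-transformed vector field. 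Finally, for a vector field $v$, one checks $\nabla_{sym}\big(Q^{tr}v(Qx)\big) = Q^{tr}\big((\nabla_{sym}v)(Qx)\big)Q$ by the same chain-rule bookkeeping. Composing these three facts, applied with $M = S$ and then with $v = \divr(-\Delta)^{-1}S$, yields
\[
\nabla_{sym}\divr(-\Delta)^{-1}S^Q(x) = Q^{tr}\Big(\big(\nabla_{sym}\divr(-\Delta)^{-1}S\big)(Qx)\Big)Q.
\]
Adding $S^Q(x) = Q^{tr}S(Qx)Q$ and using that $S$ itself satisfies $S + 2\nabla_{sym}\divr(-\Delta)^{-1}S = 0$, the whole expression collapses to zero after factoring out $Q^{tr}(\cdot)Q$. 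Together with the trace-free condition and the fact that conjugation by an orthogonal matrix preserves the $L^2$ norm (so $S^Q \in \mathcal{L}^2$), Theorem \ref{CharacterizationARMA} gives $S^Q \in \mathcal{L}^2_{st}$.

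The main obstacle, such as it is, lies in getting the index bookkeeping in the three intertwining identities exactly right — in particular making sure the chain-rule factor of $Q$ produced by differentiating $f(Qx)$ pairs with the correct matrix factor and genuinely cancels, rather than leaving a stray transpose or a $Q$ on the wrong side. A clean way to organize this is to work componentwise with the Fourier transform: conjugation-plus-rotation acts on $\widehat{S}(\xi)$ as $Q^{tr}\widehat{S}(Q\xi)Q$, the symbol of $\nabla_{sym}$ at frequency $\xi$ is the symmetrized tensor $i(\xi \otimes \cdot)_{sym}$, the symbol of $\divr$ is $i\xi \cdot$, and $(-\Delta)^{-1}$ has symbol $|\xi|^{-2}$; since $|Q\xi| = |\xi|$ and $Q^{tr}(Q\xi) = \xi$, all the $Q$'s align automatically and the identity for the symbols is transparent. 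An alternative — arguably even cleaner — approach avoids the characterization theorem entirely: writing $S = \nabla_{sym}(-\Delta)^{-1/2}u$ for some $u \in \mathbf{L}^2_{df}$, one shows that $S^Q = \nabla_{sym}(-\Delta)^{-1/2}u^Q$ where $u^Q(x) = Q^{tr}u(Qx)$, and then invokes the already-quoted fact that $u^Q \in \mathbf{\dot H}^1_{df}$ (equivalently $\mathbf{L}^2_{df}$ after the homogeneous derivative) to conclude $S^Q \in \mathcal{L}^2_{st}$ by definition. I would present the Fourier-symbol version as the main line of argument since it makes the "more canonical" algebraic structure the authors emphasize completely manifest.
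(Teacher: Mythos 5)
Your argument is correct, but your chosen main line differs from the paper's. The paper proves the proposition by working directly with the parameterization: it sets $u^Q(x)=Q^{tr}u(Qx)$, computes via the chain rule that $\nabla u^Q(x)=Q^{tr}\nabla u(Qx)Q$, deduces $\nabla\cdot u^Q=\tr(Q^{tr}\nabla u(Qx)Q)=0$ and $\|\nabla u^Q\|_{L^2}=\|\nabla u\|_{L^2}$, and then symmetrizes to get $\nabla_{sym}u^Q=S^Q$ --- i.e.\ exactly the ``alternative, arguably even cleaner'' route you relegate to your final sentences, inheriting the invariance from the known rotational invariance of $\mathbf{\dot H}^1_{df}$. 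Your primary route instead verifies the two conditions of Theorem \ref{CharacterizationARMA} by establishing the intertwining identities for $(-\Delta)^{-1}$, $\divr$, and $\nabla_{sym}$ under simultaneous domain rotation and range conjugation; I checked the index bookkeeping (e.g.\ $\divr\bigl(Q^{tr}M(Q\cdot)Q\bigr)(x)=Q^{tr}(\divr M)(Qx)$ with the paper's convention $\divr(M)_j=\sum_i\partial_iM_{ij}$, using $\sum_iQ_{ki}Q_{mi}=\delta_{km}$) and it closes correctly, as does the Fourier-side bookkeeping $\widehat{S^Q}(\xi)=Q^{tr}\widehat S(Q\xi)Q$ together with $|Q\xi|=|\xi|$. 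What each approach buys: the paper's version is shorter and reuses the Majda--Bertozzi vector-field fact, keeping everything at the level of the defining parameterization; yours makes the equivariance of the individual operators explicit and meshes naturally with the Fourier characterization in Proposition \ref{FourierConstraint}, at the cost of invoking the (nontrivial) characterization Theorem \ref{CharacterizationARMA} rather than the bare definition of $\mathcal{L}^2_{st}$. One small completeness point for your main route: remember to record that $S^Q\in\mathcal{L}^2$ at all (pointwise invariance of the Frobenius norm under conjugation by $Q$ plus the unit Jacobian of $x\mapsto Qx$), which you do mention but should not bury.
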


This essentially sums up all of the major results that we will prove for the Helmholtz decomposition of symmetric matrices.
We will now apply these results to a problem that has a direct bearing on the Navier--Stokes regularity problem: what type of geometric structures with regards to the eigenvalue distribution are possible within $\mathcal{L}^2_{st}$, the strain constraint space?
In order for a solution of the Navier--Stokes equation to blowup in finite-time, the positive part of the middle eigenvalue must blowup in a number of scale-critical spaces.
In particular, if $T_{max}<+\infty$, then for all 
$\frac{2}{p}+\frac{3}{q}=2,
\frac{3}{2}<q \leq +\infty$,
\begin{equation} \label{RegCrit}
    \int_0^{T_{max}}\|\lambda_2^+(\cdot,t)\|_{L^q}^p 
    \diff t =+\infty,
\end{equation}
where $\lambda_1 \leq \lambda_2\leq \lambda_3$ are the eigenvalues of $S$,
and $\lambda_2^+=\max\left(0,\lambda_2\right)$.
This was first proven by Neustupa and Penel in \cite{NeustupaPenel} and later revisited by the first author in a paper considering the evolution equation for the strain \cite{MillerStrain}.
This regularity criterion relies on the following identity for enstrophy growth,
\begin{equation} \label{EntGrowth}
    \frac{\diff}{\diff t}\|S(\cdot,t)\|_{L^2}^2
    =-2\|S\|_{\dot{H}^1}^2-4\int\det(S).
\end{equation}
Recall that for all $S\in\mathcal{L}^2_{st}, \tr(S)=0$---this is the divergence free constraint---and so we can see that the term $-\int\det(S)$ will drive growth when there are two positive eigenvalues and one very negative eigenvalue, the case of planar stretching/axial compression. The regularity criterion $\eqref{RegCrit}$ suggests that for the fastest possible growth, we should want $\lambda_2$ as large as possible, and in particular should want $\lambda_2=\lambda_3$ 
(and therefore $\lambda_1=-2\lambda_3$).
Furthermore, the first author showed in \cite{MillerStrain} that for all 
$M\in \mathcal{S}^{3\times 3}, \tr(M)=0$,
\begin{equation}
    -4\det(M)\leq \frac{2}{9}\sqrt{6}|M|^3,
\end{equation}
with equality if and only if $\lambda_2=\lambda_3$.
It is extremely clear, therefore, that the fastest possible growth of enstrophy would occur when $\lambda_2=\lambda_3$ at all points in space, and that such strain matrices, if they in fact exist, would be very natural candidates for finite-time blowup for Navier--Stokes.

It is far from clear however, whether this eigenvalue structure can be attained, or nearly attained, within the strain constraint space.
It remains as open question whether or not there is any nonzero strain matrix $S\in\mathcal{L}^2_{st}$ such that $\lambda_2(x)=\lambda_3(x)$ for all $x\in\mathbb{R}^3$. 

\begin{definition}
We will define the space of max-mid matrices to be the trace free matrices with the desired eigenvalue structure,
\begin{equation}
    \mathcal{L}^2_{maxmid}=
    \left\{M\in\mathcal{L}^2:
    \tr(M)=0, \lambda_2=\lambda_3\right\}.
\end{equation}
\end{definition}

\begin{conjecture} \label{GapConjecture}
There are no nontrivial max-mid matrices in the strain constraint space:
\begin{equation}
    \mathcal{L}^2_{st}\cap \mathcal{L}^2_{maxmid}
    =\{0\}.
\end{equation}
Furthermore, we can establish a gap between these spaces with
\begin{equation}
    \sup_{\substack{M\in\mathcal{L}^2_{maxmid}\\ 
    \|M\|_{L^2}=1
    }}
    \left\|P_{st}(M)\right\|_{L^2}^2
    =
    \sup_{\substack{\|\lambda\|_{L^2}=1 \\ 
    \lambda\geq 0\\
    v\in\mathbf{L}^\infty \\
    |v(x)|=1
    }}
    \left\|P_{st}\left(\frac{\lambda}{\sqrt{6}}
    \left(I_3-3v \otimes v\right)\right)
    \right\|_{L^2}^2
    <1.
\end{equation}
\end{conjecture}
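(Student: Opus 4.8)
The plan is to split the assertion into a routine reparametrisation, which is genuinely provable, and a coercivity (``spectral gap'') estimate, which is the open part. For the reparametrisation: at a.e.\ point $x$ where $M(x)\neq0$, a trace-free symmetric matrix with $\lambda_2=\lambda_3$ has eigenvalues $(-2a,a,a)$ with $a>0$, and since $|M(x)|^2=\tr\big(M(x)^2\big)=6a^2$ one gets $a=|M(x)|/\sqrt6$. The spectral projection onto the (simple) smallest eigenvalue is a real-analytic, hence measurable, function of $M(x)$ on $\{M\neq0\}$, so one may select a measurable unit vector field $v$ with $v(x)\otimes v(x)$ equal to this projection (putting $v\equiv e_1$ on $\{M=0\}$); then $M(x)=\tfrac{|M(x)|}{\sqrt6}\big(I_3-3\,v(x)\otimes v(x)\big)$. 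Writing $\lambda:=|M|$ gives $\lambda\in L^2$, $\lambda\geq0$, $v\in\mathbf L^\infty$, $|v|\equiv1$, and $\|M\|_{L^2}^2=\int\lambda^2=\|\lambda\|_{L^2}^2$, while conversely every admissible pair $(\lambda,v)$ produces such an $M$. This is exactly the bijection (modulo the irrelevant value of $v$ on $\{\lambda=0\}$) identifying the two admissible classes in the statement, so the two suprema coincide, and it remains only to prove the strict inequality.

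Since $P_{st}$ is an orthogonal projection, $\|P_{st}(M)\|_{L^2}\le\|M\|_{L^2}$ with equality iff $M\in\mathcal L^2_{st}$, so the claim $\sup<1$ is equivalent to the coercivity bound: there is $\varepsilon>0$ with $\|P_{st}^\perp(M)\|_{L^2}^2\geq\varepsilon\,\|M\|_{L^2}^2$ for all $M\in\mathcal L^2_{maxmid}$. I would work on the Fourier side, where by Theorem \ref{CharacterizationARMA} the space $\mathcal L^2_{st}$ is the fibre bundle whose fibre over $\xi$ is the two-dimensional space $V(\hat\xi)=\{\hat\xi\,w^{tr}+w\,\hat\xi^{tr}:w\perp\hat\xi\}$, $\hat\xi:=\xi/|\xi|$, so that $P_{st}$ is the Fourier multiplier acting by orthogonal projection onto $V(\hat\xi)$ and, consistently with $\|P_{st}(M)\|_{L^2}^2=2\|\nabla\times\divr(-\Delta)^{-1}M\|_{L^2}^2$,
\begin{equation*}
\|P_{st}(M)\|_{L^2}^2=2\int_{\mathbb R^3}\Big(|\hat M(\xi)\hat\xi|^2-\big|\hat\xi^{tr}\hat M(\xi)\hat\xi\big|^2\Big)\diff\xi .
\end{equation*}
Averaging the integrand over $\hat\xi\in S^2$ (using $\tr\hat M(\xi)=0$) shows the ratio would be exactly $2/5$ if $\hat M(\xi)$ were uncorrelated with $\hat\xi$; the conjecture is that the pointwise-in-$x$ max-mid constraint cannot force enough alignment of $\hat M(\xi)$ with $V(\hat\xi)$ for a.e.\ $\xi$ to push the ratio up to $1$. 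To prove the coercivity bound I would run a concentration--compactness/profile decomposition for a maximising sequence $(M_n)$: both the ratio $\|P_{st}(M)\|_{L^2}^2/\|M\|_{L^2}^2$ and the class $\mathcal L^2_{maxmid}$ are invariant under translations, dilations $M\mapsto M(r\,\cdot)$, and the rotations $M\mapsto Q^{tr}M(Q\,\cdot)Q$ of Proposition \ref{RotInvarIntro}, so after normalising by these symmetries the sequence either converges strongly in $L^2$---in which case the limit $M$ satisfies $P_{st}(M)=M$ and is max-mid, hence $M\in\mathcal L^2_{st}\cap\mathcal L^2_{maxmid}$---or loses compactness through dichotomy or vanishing.

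The argument then reduces to three points: (i) the rigidity statement $\mathcal L^2_{st}\cap\mathcal L^2_{maxmid}=\{0\}$, so the strong-limit case forces $M=0$, contradicting $\|M\|_{L^2}=1$; (ii) strict subadditivity of the concentration function, excluding dichotomy; and (iii) a dispersive bound, excluding vanishing. The genuinely open difficulty is (i)---whether any nonzero strain matrix can have $\lambda_2=\lambda_3$ almost everywhere---together with making the scheme quantitative enough to yield $\varepsilon>0$ rather than only $\varepsilon=0$; I expect (i) to be the main obstacle. The structural reason it is hard is that $\mathcal L^2_{st}$ is a pointwise condition in frequency while $\mathcal L^2_{maxmid}$ is a pointwise, nonlinear, non-convex condition in physical space, and there is no evident scalar functional---linear or of low polynomial degree---that is controlled on $\mathcal L^2_{st}$ while being bounded below on nontrivial max-mid fields: $\int\det$ is precisely the nonvanishing enstrophy-production term in \eqref{EntGrowth}, hence does not vanish on $\mathcal L^2_{st}$, and the pointwise bound $-4\det(M)\le\tfrac{2}{9}\sqrt6\,|M|^3$ is an equality on every max-mid matrix, so it gives no gap either. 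A plausible route is to feed the rotational invariance of Proposition \ref{RotInvarIntro} and the Fourier description of $\mathcal L^2_{st}$ into an overdetermined elliptic system for the eigenvector field $v$; but turning such a system into a quantitative gap appears to need a genuinely new idea, which is why the statement is posed as a conjecture rather than a theorem.
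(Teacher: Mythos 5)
The statement you are addressing is posed in the paper as a conjecture, and the authors state explicitly that they do not resolve it; there is therefore no proof in the paper to compare yours against, and you are right not to claim one. The part of your write-up that is an actual proof --- the identification of the two suprema --- is correct and in fact supplies a detail the paper only asserts in the remark following the conjecture: the eigenvalue computation $(-2a,a,a)$ with $a=|M|/\sqrt6$ forced by $\tr M=0$ and $\lambda_2=\lambda_3$, and the measurable selection of $v$ from the spectral projection onto the simple bottom eigenvalue on $\{M\neq0\}$, together give the bijection between $\mathcal L^2_{maxmid}$ and the class $\{\frac{\lambda}{\sqrt6}(I_3-3v\otimes v)\}$. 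Your Fourier formula for $\|P_{st}(M)\|_{L^2}^2$ and the spherical average $2/5$ are also correct and consistent with Proposition \ref{FourierConstraint} and the paper's identity $\|P_{st}(M)\|_{L^2}^2=2\|\nabla\times\divr(-\Delta)^{-1}M\|_{L^2}^2$.

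The strict inequality, however, remains open after your argument, and the concentration--compactness scheme as sketched has a structural obstruction beyond the admitted gap (i). The functional $\|P_{st}(M)\|_{L^2}^2/\|M\|_{L^2}^2$ is a bounded quadratic form given by a zero-order Fourier multiplier, invariant under translations, all dilations, and rotations at the critical $L^2$ level; unlike a subcritical Sobolev-embedding problem, there is no residual compactness for a profile decomposition to exploit, so maximizing sequences may lose mass entirely to vanishing or to infinitely many incommensurable scales, and steps (ii) and (iii) are not merely technical. Moreover, even granting the rigidity statement $\mathcal L^2_{st}\cap\mathcal L^2_{maxmid}=\{0\}$, qualitative rigidity does not yield a quantitative gap unless the supremum is attained or near-maximizers are shown to converge strongly, and the constraint class $\mathcal L^2_{maxmid}$ is not weakly closed (the pointwise nonlinear condition $\lambda_2=\lambda_3$ is not preserved under weak $L^2$ limits), so only the strong-convergence branch of your trichotomy is usable. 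The paper's own partial results --- the value $3/4$ for fixed $v$ (Theorem \ref{OneDirectionMaxIntro}), the exclusion of $v$ confined to a plane (Theorem \ref{TwoDirectionCaseIntro}), and the reduction to $\frac32\sup_{\|w\|_{L^4}=1}\|P_{st}(w\otimes w)\|_{L^2}^2$ --- are the currently available substitutes for the gap, and your proposal would be strengthened by routing the problem through that rank-one reformulation rather than through a profile decomposition.
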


\begin{remark}
We will show in section \ref{InequalitiesSection} that the two suprema are equal with the latter serving as a parameterization of $\mathcal{L}^2_{maxmid}$.
The normalization factor of $\frac{1}{\sqrt{6}}$ comes from the fact that
\begin{equation}
    \left\|
    \frac{\lambda}{\sqrt{6}}
    \left(I_3-3v \otimes v\right)
    \right\|_{L^2}^2
    =
    \|\lambda\|_{L^2}^2.
\end{equation}
\end{remark}

\begin{remark}
Conjecture \ref{GapConjecture} is very important for the Navier--Stokes regularity problem 
because any nontrivial matrix in
$\mathcal{L}^2_{st}\cap \mathcal{L}^2_{maxmid}$,
would form a very natural candidate for finite-time blowup. If Conjecture \ref{GapConjecture} fails, even if the supremum of one is not attained, then the near maximizers would be very natural blowup candidates. 
Likewise, if Conjecture \ref{GapConjecture} holds, then the near maximizers (or maximizers if they exist) would still form natural candidates for possible finite-time blowup; however, in this case, the structure in the constraint space that requires the supremum to be less than one would also be worth investigating as a source of cancellation in the growth of enstrophy which might lead to global regularity. 

It should be noted that, while it remains an open question whether a strain matrix may satisfy $\lambda_2=\lambda_3$ globally in space, this geometric structure already features heavily in the existing literature on finite-time blowup for incompressible fluids. This $-2\lambda, \lambda,\lambda$ eigenvalue structure is precisely the geometric structure of the strain matrix at the stagnation point blowup in Elingdi's recent proof of finite-time blowup for $C^{1,\alpha}$ solutions of the incompressible Euler equation \cite{Elgindi}.
\end{remark}

We will not be able to resolve the question posed in Conjecture \ref{GapConjecture} in this paper, however we can compute the supremum in the special case where the unit vector $v\in\mathbb{R}^3, |v|=1$ is fixed, giving both the supremum explicitly and a family of near maximizers. The fixed direction case allows us to use information from the Fourier space side characterization of $\mathcal{L}^2_{st}$ that is not available in general for matrices in $\mathcal{L}^2_{maxmid}$.
We will discuss the barriers to the general case in more detail in section \ref{InequalitiesSection}.

\begin{theorem} \label{OneDirectionMaxIntro}
Let $v\in\mathbb{R}^3$ be a fixed unit vector, $|v|=1$, then purely axial compression in the $v$, and planar stretching in the plane perpendicular to $v$ is not possible. That is there does not exist $\lambda \in L^2, \lambda\geq 0, \lambda$ not identically zero such that
\begin{equation}
    \lambda \left(I_3-v\otimes v\right)
    \in \mathcal{L}^2_{st}.
\end{equation}
In fact, for all $v\in\mathbb{R}^3, |v|=1$ we have the explicit bound on the projection of matrices of the form onto $\mathcal{L}^2_{st}$,
\begin{equation}
    \sup_{\substack{\|\lambda\|_{L^2}=1 \\
    \lambda \geq 0}}
    \left\|P_{st}\left(\frac{\lambda}{\sqrt{6}}
    \left(I_3-3v \otimes v\right)\right)
    \right\|_{L^2}^2=\frac{3}{4}.
\end{equation}

We can also give explicitly a family of near maximizers for the case $v=e_3$ as follows. Suppose 
$\|\lambda\|_{L^2}=1,$ and
$\supp\left(\hat{\lambda}\right) \in 
\left\{\xi\in\mathbb{R}^3: 
\frac{1}{2}-\frac{\epsilon}{2}
<\frac{\xi_3^2}{|\xi|^2}
<\frac{1}{2}+\frac{\epsilon}{2}\right\}$.
Then
\begin{equation}
    \left\|P_{st}\left(\frac{\lambda}{\sqrt{6}}
    \left(I_3-3 e_3\otimes e_3\right)\right)\right\|_{L^2}
    >\frac{3}{4}(1-\epsilon)^2.
\end{equation}
The analogous family of maximizers for any other fixed $v\in\mathbb{R}^3, |v|=1$ can be immediately deduced by the rotational symmetry of $\mathcal{L}^2_{st}$ given in Proposition \ref{RotInvarIntro}.
\end{theorem}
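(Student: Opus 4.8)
The plan is to transport the entire statement to the Fourier side via the projection-norm identity $\|P_{st}(M)\|_{L^2}^2 = 2\|\nabla\times\divr(-\Delta)^{-1}M\|_{L^2}^2$ recorded above, which turns the claim into an elementary one-variable extremal problem. Writing $\widehat{\,\cdot\,}$ for the (unitary) Fourier transform, for a symmetric matrix field $M$ one has $\widehat{\divr M}(\xi)=i\,\hat M(\xi)\xi$, $\widehat{(-\Delta)^{-1}W}(\xi)=|\xi|^{-2}\hat W(\xi)$ and $\widehat{\nabla\times W}(\xi)=i\,\xi\times\hat W(\xi)$, so $\widehat{\nabla\times\divr(-\Delta)^{-1}M}(\xi)=-|\xi|^{-2}\,\xi\times\bigl(\hat M(\xi)\xi\bigr)$. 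Plugging in $M=\tfrac{\lambda}{\sqrt6}\bigl(I_3-3v\otimes v\bigr)$ gives $\hat M(\xi)\xi=\tfrac{\hat\lambda(\xi)}{\sqrt6}\bigl(\xi-3(v\cdot\xi)v\bigr)$, hence $\xi\times(\hat M(\xi)\xi)=-\tfrac{3\hat\lambda(\xi)(v\cdot\xi)}{\sqrt6}\,\xi\times v$; using $|\xi\times v|^2=|\xi|^2-(v\cdot\xi)^2$ together with Plancherel one arrives at
\begin{equation}
\left\|P_{st}\!\left(\tfrac{\lambda}{\sqrt6}(I_3-3v\otimes v)\right)\right\|_{L^2}^2
=3\int_{\mathbb R^3}|\hat\lambda(\xi)|^2\,t(\xi)\bigl(1-t(\xi)\bigr)\,\diff\xi,
\qquad t(\xi):=\frac{(v\cdot\xi)^2}{|\xi|^2}\in[0,1].
\end{equation}
After this reduction everything hinges on the single weight $t\mapsto t(1-t)$ on $[0,1]$.

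Both the upper bound and the non-existence assertion are then immediate. Since $t(1-t)\le\tfrac14$ on $[0,1]$ and, as noted in the remark above, $\bigl\|\tfrac{\lambda}{\sqrt6}(I_3-3v\otimes v)\bigr\|_{L^2}^2=\|\lambda\|_{L^2}^2$, the identity gives $\|P_{st}(M)\|_{L^2}^2\le\tfrac34\|M\|_{L^2}^2$; the inequality is moreover strict whenever $M\neq0$, because $\{t=\tfrac12\}$ is a quadric cone of Lebesgue measure zero while $\hat\lambda$ cannot vanish almost everywhere. Hence if $\lambda(I_3-3v\otimes v)\in\mathcal{L}^2_{st}$ with $\lambda\not\equiv0$, then $P_{st}$ would fix this matrix and its projection would have full norm, contradicting the strict inequality; so no nontrivial such $\lambda$ exists, and the supremum in question is $\le\tfrac34$.

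For the matching lower bound I would take $v=e_3$ first (a general unit vector then follows either by repeating the computation or from the rotational invariance of $\mathcal{L}^2_{st}$ in Proposition~\ref{RotInvarIntro}), so that $t(\xi)=\xi_3^2/|\xi|^2$ and $\{t=\tfrac12\}=\{\xi_1^2+\xi_2^2=\xi_3^2\}$. For any $\lambda$ with $\|\lambda\|_{L^2}=1$ and $\supp(\hat\lambda)\subseteq\bigl\{\,\xi:\ \tfrac12-\tfrac\epsilon2<\tfrac{\xi_3^2}{|\xi|^2}<\tfrac12+\tfrac\epsilon2\,\bigr\}$ one has $t(1-t)\ge\tfrac{1-\epsilon^2}{4}$ on the support, so the displayed identity yields $\|P_{st}(M)\|_{L^2}^2>\tfrac{3(1-\epsilon^2)}{4}\ge\tfrac34(1-\epsilon)^2$, which is precisely the claimed near-maximizer bound and already forces the \emph{unconstrained} supremum to equal $\tfrac34$. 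The one genuinely delicate point is that the supremum in the theorem ranges over $\lambda\ge0$, and a nonnegative $\lambda$ cannot have $\hat\lambda$ supported away from the origin, so the family just used is not admissible. I would remove this obstruction with a Fej\'er-type construction: put $\lambda_{K,N}=|\mu_{K,N}|^2$ with $\mu_{K,N}(x)=\chi(x)\sum_{k=1}^{K}e^{iNk\,x\cdot\xi_0}$, where $\chi$ is a fixed Schwartz bump and $\xi_0$ a unit vector with $t(\xi_0)=\tfrac12$; then $\lambda_{K,N}\ge0$ (it equals $|\chi|^2$ times a Fej\'er kernel), and $\widehat{\lambda_{K,N}}$ is a superposition of translates of $\widehat{|\chi|^2}$ centred at $mN\xi_0$ for $|m|\le K-1$ with weights $K-|m|$. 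For $N$ large these translates are $L^2$-almost disjoint, the $m=0$ translate carries an $O(1/K)$ fraction of the total $L^2$ mass, and on every lump with $m\neq0$ the weight $t$ lies within $O\bigl((N|m|)^{-1}\bigr)$ of $t(\xi_0)=\tfrac12$; letting $K\to\infty$ and then $N\to\infty$ drives the right side of the displayed identity to $\tfrac34\|\lambda_{K,N}\|_{L^2}^2$, so $\sup_{\lambda\ge0}=\tfrac34$ as well.

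I expect this final step — securing the lower bound $\tfrac34$ within the nonnegative cone — to be the main obstacle. The principle underlying it, namely that a Fourier multiplier whose essential supremum is attained only on a null set can nonetheless have its associated quadratic form attain the $L^\infty$ norm of its symbol as a supremum over \emph{nonnegative} inputs, fails for general positive operators; it succeeds here only because the extremal set $\{t=\tfrac12\}$ is a cone running off to infinity, which is exactly what makes the Fej\'er wave packets available, and the almost-disjointness and the decay of the $m=0$ contribution have to be quantified with some care. Everything else — the Fourier computation of the symbol, the bound $t(1-t)\le\tfrac14$, and the near-maximizer estimate — is routine once the identity in the first step is in place.
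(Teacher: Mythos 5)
Your proposal is correct, and it takes a genuinely different route from the paper. The paper does not use the projection-norm identity $\|P_{st}(M)\|_{L^2}^2=2\|\nabla\times\divr(-\Delta)^{-1}M\|_{L^2}^2$ here; instead it first proves a standalone sharp inequality $\|S\odot(v\otimes v)\|_{L^2}^2\le\tfrac12\|S\|_{L^2}^2$ for $S\in\mathcal{L}^2_{st}$ and fixed $|v|=1$ (Theorem~\ref{OneDiagCompBound}), then obtains the $\tfrac34$ upper bound by the dual characterization of $\|P_{st}(\cdot)\|_{L^2}$ together with Cauchy--Schwarz, and produces near-maximizers by writing down an explicit $\hat S$ paired against the given $\hat\lambda$ and estimating the inner product. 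Your route instead pushes the whole problem onto the Fourier side at once: the computation $\xi\times(\hat M(\xi)\xi)=-\tfrac{3\hat\lambda(v\cdot\xi)}{\sqrt6}\,\xi\times v$ is correct (with the dimensionful $2\pi$'s canceling exactly as you indicate), and the resulting scalar-weight identity
\begin{equation*}
\left\|P_{st}\!\left(\tfrac{\lambda}{\sqrt6}(I_3-3v\otimes v)\right)\right\|_{L^2}^2
=3\int_{\mathbb R^3}|\hat\lambda(\xi)|^2\,t(\xi)\bigl(1-t(\xi)\bigr)\,\diff\xi,\qquad t=\frac{(v\cdot\xi)^2}{|\xi|^2},
\end{equation*}
does everything in one shot: the upper bound from $t(1-t)\le\tfrac14$, the non-existence since the ratio is strictly below $1$, and the near-maximizer estimate from $t(1-t)>\tfrac{1-\epsilon^2}{4}\ge\tfrac{(1-\epsilon)^2}{4}$ on the thin cone. (Note that you obtain the inequality for the \emph{squared} norm, which is what the argument actually gives and is stronger than the displayed inequality in the theorem statement, where the square appears to have been dropped by a typo.) Your identity is arguably cleaner and more reusable than the paper's duality argument; the paper's intermediate inequality on $\|S\odot(v\otimes v)\|$ is a statement of independent interest that your approach bypasses.

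The one place where you do noticeably more work than necessary is the nonnegativity step. Your Fej\'er wave-packet construction is sound — the key observations (positivity of $|\mu|^2$ times a Fej\'er kernel, almost-disjointness of the translated lumps for large $N$, the $m=0$ lump carrying an $O(1/K)$ fraction of the $L^2$ mass, and $t$ being within $O((N|m|)^{-1})$ of $t(\xi_0)=\tfrac12$ on the $m\ne0$ lumps because $t$ is homogeneous of degree zero) are all correct and the double limit $K\to\infty$ then $N\to\infty$ does what you want. But the paper gets the same conclusion more cheaply: take $\hat\lambda_n$ a Gaussian elongated along a ray on the critical cone (and squeezed transversally), normalized in $L^2$. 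The Fourier transform of a Gaussian is a Gaussian, so $\lambda_n>0$ automatically, and the spectral mass concentrates on $\{t=\tfrac12\}$ as $n\to\infty$ without any need to control cross-terms or a central lump. If you keep your route, the Fej\'er construction is fine, but the Gaussian trick avoids essentially all of the quantitative bookkeeping you flagged as the main obstacle.
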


Furthermore, we can also establish that if the direction of axial compression can vary in a fixed plane, but not all directions, then this is enough to rule out nontrivial maxmid, strain matrices, although in this case we cannot establish a gap.

\begin{theorem} \label{TwoDirectionCaseIntro}
Suppose that $|v(x)|=1$ and $v_1(x)=0$ almost everywhere $x\in\mathbb{R}^3$. That is, suppose $v$ is a unit vector varying in space, but restricted to the $yz$ plane.
Then there does not exist $\lambda\in L^2, \lambda\geq 0,$ and $\lambda$ not identically zero, such that
\begin{equation}
    \lambda(I_3-3v\otimes v)\in \mathcal{L}^2_{st}.
\end{equation}
This means that any nontrivial maxmid strain matrix must compress in all three directions. The axis of compression cannot be confined to any fixed plane.
\end{theorem}

While we cannot resolve Conjecture \ref{GapConjecture} and establish a gap in the general case, we can reduce this question to the projection of rank one matrices onto the strain space as follows:
\begin{equation} \label{RankOneReduction}
    \sup_{\substack{\|\lambda\|_{L^2}=1 \\ 
    \lambda\geq 0\\
    |v(x)|=1
    }}
    \left\|P_{st}\left(\frac{\lambda}{\sqrt{6}}
    \left(I_3-3v \otimes v\right)\right)
    \right\|_{L^2}^2
    =
    \frac{3}{2}\sup_{\|w\|_{L^4}=1}
    \left\|P_{st}(w \otimes w)\right\|_{L^2}^2.
\end{equation}

Finally, we will prove that if Conjecture \ref{GapConjecture} holds, then we have an inequality giving a lower bound on the size of $\lambda_3-\lambda_2^+$ for all matrices in $\mathcal{L}^2_{st}$.

\begin{theorem} \label{EigenGapIntro}
Suppose
\begin{equation} 
    \sup_{\substack{\|\lambda\|_{L^2}=1 \\ 
    \lambda\geq 0\\
    |v(x)|=1
    }}
    \left\|P_{st}\left(\frac{\lambda}{\sqrt{6}}
    \left(I_3-3v \otimes v\right)\right)
    \right\|_{L^2}^2=r<1.
\end{equation}
Then for all $S\in\mathcal{L}^2_{st}$,
\begin{equation}
    \left\|\lambda_3-\lambda_2^+\right\|_{L^2}\geq 
    \frac{1-r}{\sqrt{2}} \|S\|_{L^2},
\end{equation}
where $\lambda_1(x)\leq \lambda_2(s)\leq \lambda_3(x)$
are the eigenvalues of $S(x)$ 
and $\lambda_2^+=\max(0,\lambda_2)$.
\end{theorem}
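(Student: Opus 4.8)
The plan is to estimate $\|S\|_{L^2}$ from below by comparing $S$ pointwise with a max-mid matrix field, and then to invoke the supremum hypothesis together with the identity $P_{st}S=S$. First I would use a standard measurable selection to pick a measurable unit vector field $q_1(x)$ spanning the eigenspace of the least eigenvalue $\lambda_1(x)$ of $S(x)$, and set
\[
M(x) = -\tfrac{\lambda_1(x)}{2}\bigl(I_3 - 3\, q_1(x)\otimes q_1(x)\bigr).
\]
Because $\tr(S)=0$ forces $\lambda_1\le 0$ everywhere, the scalar $-\lambda_1/2$ is non-negative, so $M$ is pointwise of the form $\mu(I_3-3v\otimes v)$ with $\mu\ge 0$ and $|v|=1$; moreover $\|M(x)\|^2=\tfrac32\lambda_1(x)^2\le\tfrac32\|S(x)\|^2$, so $M\in\mathcal{L}^2_{maxmid}$ and the hypothesis applies to it, giving $\|P_{st}M\|_{L^2}^2\le r\|M\|_{L^2}^2$.

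Next I would do the pointwise computation. In the eigenbasis of $S(x)$ the matrix $S-M$ is diagonal with entries $0$, $\lambda_2+\tfrac{\lambda_1}{2}$, $\lambda_3+\tfrac{\lambda_1}{2}$, which by $\lambda_1=-(\lambda_2+\lambda_3)$ equal $0$, $-\tfrac12(\lambda_3-\lambda_2)$, $\tfrac12(\lambda_3-\lambda_2)$; hence $\|S(x)-M(x)\|^2=\tfrac12(\lambda_3(x)-\lambda_2(x))^2$. To pass from $\lambda_3-\lambda_2$ to $\lambda_3-\lambda_2^+$ I would split on the sign of $\lambda_2$: on $\{\lambda_2\ge 0\}$ the two coincide, while on $\{\lambda_2<0\}$ the ordering $\lambda_1\le\lambda_2<0<\lambda_3$ together with $\lambda_1+\lambda_2+\lambda_3=0$ forces $\lambda_2\ge-\lambda_3/2$, i.e. $|\lambda_2|\le\tfrac12\lambda_3=\tfrac12(\lambda_3-\lambda_2^+)$, so there $\lambda_3-\lambda_2=\lambda_3+|\lambda_2|\le\tfrac32(\lambda_3-\lambda_2^+)$. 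Integrating gives $\|S-M\|_{L^2}^2\le\tfrac98\|\lambda_3-\lambda_2^+\|_{L^2}^2$.

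The third step is the Hilbert space argument. Put $E=M-S$. Since $P_{st}S=S$ and $P_{st}^{\perp}S=0$ we have $P_{st}M=S+P_{st}E$ and $P_{st}^{\perp}M=P_{st}^{\perp}E$, so by orthogonality $\|M\|^2=\|S+P_{st}E\|^2+\|P_{st}^{\perp}E\|^2$. Substituting into $\|S+P_{st}E\|^2=\|P_{st}M\|^2\le r\|M\|^2$ gives $(1-r)\|S+P_{st}E\|^2\le r\|P_{st}^{\perp}E\|^2$; combining this with $\|S+P_{st}E\|\ge\|S\|-\|P_{st}E\|$ and minimizing $u^2+\tfrac{1-r}{r}(\|S\|-u)^2$ over $u=\|P_{st}E\|\ge 0$ (the case $r=0$ being immediate) yields $\|E\|_{L^2}^2\ge(1-r)\|S\|_{L^2}^2$. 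Together with the pointwise bound, $\tfrac{3}{2\sqrt2}\|\lambda_3-\lambda_2^+\|_{L^2}\ge\|S-M\|_{L^2}\ge\sqrt{1-r}\,\|S\|_{L^2}$, hence $\|\lambda_3-\lambda_2^+\|_{L^2}\ge\tfrac{2\sqrt2}{3}\sqrt{1-r}\,\|S\|_{L^2}$; since $\sqrt{1-r}\le 1\le\tfrac43$ this is at least $\tfrac{1-r}{\sqrt2}\|S\|_{L^2}$, which is the stated bound (in fact with room to spare).

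The only delicate point, and the main obstacle, is the set $\{\lambda_2<0\}$: there the closest max-mid matrix to $S(x)$ lies at distance $\tfrac{1}{\sqrt2}(\lambda_3-\lambda_2)$, strictly larger than $\tfrac{1}{\sqrt2}(\lambda_3-\lambda_2^+)=\tfrac{1}{\sqrt2}\lambda_3$, so a bare ``distance to $\mathcal{L}^2_{maxmid}$'' estimate does not by itself produce the $\lambda_2^+$ statement. The elementary inequality $|\lambda_2|\le\tfrac12\lambda_3$ valid on that set, forced by trace-freeness and the eigenvalue ordering, is precisely what bridges the gap, at the cost of the harmless factor $\tfrac32$. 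A secondary, routine point is the existence of a measurable eigenvector field $q_1$, which follows from standard measurable selection theorems since $S$ is a measurable symmetric-matrix field.
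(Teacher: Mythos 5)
Your proof is correct, and it takes a genuinely different route from the paper's. The paper writes $S = \lambda_2^+\left(I_3 - 3v_1\otimes v_1\right) + \tilde{S}$, where $v_1$ is the eigenvector field of the least eigenvalue and $\tilde{S}$ is defined piecewise by the sign of $\lambda_2$, bounds $\|\tilde{S}\|_{L^2}\le\sqrt{2}\|\lambda_3-\lambda_2^+\|_{L^2}$ by a direct pointwise computation, and then closes with a single triangle inequality applied to $S=P_{st}(S)$ together with the elementary bound $\lambda_2^+ \le \frac{1}{\sqrt{6}}|S|$; no optimization is involved. You instead pick the pointwise-nearest max-mid field $M=-\frac{\lambda_1}{2}\left(I_3-3q_1\otimes q_1\right)$ (coefficient $-\lambda_1/2$ rather than $\lambda_2^+$), compute $|S(x)-M(x)|^2=\frac12(\lambda_3-\lambda_2)^2$ exactly in the eigenbasis, convert $\lambda_3-\lambda_2$ to $\lambda_3-\lambda_2^+$ via the trace-forced inequality $|\lambda_2|\le\frac12\lambda_3$ on $\{\lambda_2<0\}$, and then carry out a genuine Hilbert-space minimization over $u=\|P_{st}(M-S)\|_{L^2}$ to obtain $\|M-S\|_{L^2}^2\ge(1-r)\|S\|_{L^2}^2$. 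All the steps check out; the measurable-selection point you flag applies equally to the paper's choice of $v_1$ and is not an extra burden peculiar to your approach.

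What your longer route buys is a strictly stronger conclusion, $\|\lambda_3-\lambda_2^+\|_{L^2}\ge\frac{2\sqrt{2}}{3}\sqrt{1-r}\,\|S\|_{L^2}$, which dominates $\frac{1-r}{\sqrt{2}}\|S\|_{L^2}$ for every $r\in[0,1)$ since the comparison reduces to $\frac{4}{3}\ge\sqrt{1-r}$. It also matches the hypothesis exactly as stated here, where the supremum equals $r$: the paper's triangle-inequality step $\left\|P_{st}\left(\lambda_2^+(I_3-3v_1\otimes v_1)\right)\right\|_{L^2}\le r\|S\|_{L^2}$ actually uses the hypothesis in the form that the supremum equals $r^2$ (which is how the hypothesis is written in the body-of-paper version of this theorem), and under the present $\sup=r$ hypothesis it would only deliver the weaker constant $\frac{1-\sqrt{r}}{\sqrt{2}}$. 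Your orthogonal-projection optimization absorbs that square-root loss cleanly.
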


Another useful relation involving the strain projection---which is also germane to the Navier--Stokes regularity problem---is a commutator-type relationship involving the divergence and the classical Helmholtz projection given by
\begin{equation} \label{ProjectionDiv}
    \divr P_{st}=P_{df} \divr.
\end{equation}
This relationship will allow us to derive a viscous Hamilton-Jacobi type equation for the Matrix potential for the Navier--Stokes equation.
If we let $M=2(-\Delta)^{-1}S,$ then we will also have
\begin{equation}
    u=-\divr(M),
\end{equation}
so $M$ is the matrix potential for $u$.
The Navier--Stokes equation can be expressed in terms of the Helmholtz projection as
\begin{equation} \label{NS}
    \partial_t u-\Delta u+P_{df}\divr (u\otimes u)=0.
\end{equation}
We will show using the identity \eqref{ProjectionDiv}, that $u$ is a solution of the Navier--Stokes equation \eqref{NS}
if and only if
$M$ is a solution of the Navier--Stokes matrix potential equation
\begin{equation} \label{MatrixPotentialIntro}
    \partial_t M-\Delta M-P_{st}
    \left(\divr(M)\otimes\divr(M)\right)=0.
\end{equation}
This is particularly interesting because the equation 
\eqref{MatrixPotentialIntro} has a viscous Hamilton-Jacobi type structure, and the viscous Hamilton-Jacobi equation
\begin{equation}
    \partial_t f-\Delta f-|\nabla f|^2=0,
\end{equation}
has global smooth solutions. There is no clear way to see that the Navier--Stokes matrix potential equation has global smooth solutions, because the projection involved is a Riesz type transform and because $\divr(M)\otimes\divr(M)$ is a much more complicated quadratic term than $|\nabla f|^2$ for a scalar function $f$.
Nonetheless, the matrix potential equation, introduced here for the first time, deserves further study.
We will discuss this in detail in section \ref{MatrixPotentialSection}.

We will also consider the orthogonal decomposition of square integrable symmetric matrices for the general $d$-dimensional case, giving the decomposition for
$\mathcal{L}^2\left(\mathbb{R}^d\right)$.
When $d=2$, we will show that
\begin{equation}
\mathcal{L}^2\left(\mathbb{R}^2\right)
=\mathcal{L}^2_{st}\left(\mathbb{R}^2\right)
\oplus \mathcal{L}^2_{Hess}\left(\mathbb{R}^2\right)
\oplus \mathcal{L}^2_{\widetilde{Id}}\left(\mathbb{R}^2\right).
\end{equation}
This is true in the two dimensional case 
in particular because
\begin{equation}
    \mathcal{L}^2_{tr\&divfree}\left(\mathbb{R}^2\right)=
    \ker(\tr)\cap\ker(\divr)
    =\left\{\mathbf{0}\right\}.
\end{equation}
We will also show that in the general $d$ dimensional case with $d\geq 3$, we have the orthogonal decomposition
\begin{align}
    \mathcal{L}^2 \left(\mathbb{R}^d\right)
    &=
    \mathcal{L}^2_{st}\left(\mathbb{R}^d\right)
    \oplus \mathcal{L}^2_{Hess}\left(\mathbb{R}^d\right)
    \oplus \mathcal{L}^2_{divfree}
    \left(\mathbb{R}^d\right) \\
    &=
    \mathcal{L}^2_{st}\left(\mathbb{R}^d\right)
    \oplus \mathcal{L}^2_{Hess}\left(\mathbb{R}^d\right)
    \oplus \mathcal{L}^2_{\widetilde{Id}}\left(\mathbb{R}^d\right)
    \oplus \mathcal{L}^2_{tr\&divfree}
    \left(\mathbb{R}^d\right).
\end{align}
While we do have the same decomposition 
for $d> 3$, as for $d=3$, there are significant geometric differences buried in the structure of these spaces, and that for larger dimensions, the space
$\mathcal{L}^2_{st}\left(\mathbb{R}^d\right)$ 
will have more degrees of freedom, and the space 
$\mathcal{L}^2_{divfree}\left(\mathbb{R}^d\right)$
will have substantially more degrees of freedom,
so the structure does in a real way depend on dimension, because the larger the dimension of the domain $\mathbb{R}^d$,
the larger the dimension of the kernel of divergence on the Fourier space side.

Finally, we will discuss the analog of Theorem \ref{HelmholtzOneIntro} for anti-symmetric matrices.
It follows immediately from elementary linear algebra, that matrix valued $L^2$ functions have a decomposition into symmetric and anti-symmetric matrices:
\begin{equation}
    L^2\left(\mathbb{R}^d;\mathbb{R}^{d\times d}\right)
    =
    L^2\left(\mathbb{R}^d;\mathcal{S}^{d\times d}\right)
    \oplus
    L^2\left(\mathbb{R}^d;\mathcal{A}^{d\times d}\right).
\end{equation}
Similar to the Helmholtz decomposition for symmetric matrices, we can also give an orthogonal decomposition for anti-symmetric matrices. First we will define two new spaces.

\begin{definition}
We define the subspace of anti-symmetric gradients of divergence free vector fields, as the vorticity matrices,
\begin{equation}
    \mathbb{L}^2_{vort}
    =\left\{
    \nabla_{asym}(-\Delta)^{-\frac{1}{2}}v:
    v\in \mathbf{L}^2_{df}.
    \right\}
\end{equation}
We define the subspace of divergence free anti-symmetric matrices by
\begin{equation}
    \mathbb{L}^2_{divfree}
    =
    \ker(\divr)
    =
    \left\{
    M\in L^2\left(\mathbb{R}^d;\mathcal{A}^{d\times d}\right)
    :\divr(-\Delta)^{-\frac{1}{2}}M=0\right\}.
\end{equation}
\end{definition}

\begin{remark}
Note that in general the vorticity is expressed as an anti-symmetric matrix. It is only in the special case $d=3$ that the vorticity can be expressed as a vector, and only in the special case $d=2$ that it can be expressed as a scalar.
\end{remark}

\begin{theorem} \label{HelmholtzAntiSymIntro}
For all $d\geq 2$, the space of anti-symmetric matrix valued functions 
$L^2\left(\mathbb{R}^d;A^{d\times d}\right)$ has the following orthogonal decomposition:
\begin{equation}
    L^2\left(\mathbb{R}^d;\mathcal{A}^{d\times d}\right)
    =
    \mathbb{L}^2_{vort} \oplus \mathbb{L}^2_{divfree}. 
\end{equation}
Furthermore, this yields the general orthogonal decomposition of matrix valued functions,
\begin{align}
    L^2\left(\mathbb{R}^d;\mathbb{R}^{d\times d}\right)
    &=
    L^2\left(\mathbb{R}^d;\mathcal{S}^{d\times d}\right)
    \oplus
    L^2\left(\mathbb{R}^d;\mathcal{A}^{d\times d}\right) \\
    &=
    \mathcal{L}^2_{st} \oplus \mathcal{L}^2_{Hess} 
\oplus \mathcal{L}^2_{divfree} \oplus
    \mathbb{L}^2_{vort} \oplus \mathbb{L}^2_{divfree}.
\end{align}
\end{theorem}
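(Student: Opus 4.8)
The plan is to pass to the Fourier side and reduce the claimed decomposition of $L^{2}\left(\mathbb{R}^{d};\mathcal{A}^{d\times d}\right)$ to a fibrewise statement of linear algebra on the space $\mathcal{A}^{d\times d}$ of antisymmetric matrices. By Plancherel's theorem it is enough to understand, for almost every fixed frequency $\xi\neq 0$, the two subspaces of $\mathcal{A}^{d\times d}$ carved out by the two multipliers. Writing $\hat M(\xi)\in\mathcal{A}^{d\times d}$ for the value at $\xi$ of the Fourier transform of an antisymmetric matrix field, one computes
\begin{equation}
    \widehat{\nabla_{asym}(-\Delta)^{-\frac{1}{2}}v}(\xi)
    =\frac{i}{2|\xi|}\left(\xi\otimes\hat v(\xi)-\hat v(\xi)\otimes\xi\right),
    \qquad
    \widehat{\divr(-\Delta)^{-\frac{1}{2}}M}(\xi)
    =\frac{i}{|\xi|}\,\xi^{tr}\hat M(\xi).
\end{equation}
Since $v\in\mathbf{L}^2_{df}$ forces $\xi\cdot\hat v(\xi)=0$, on the fibre over $\xi$ the space $\mathbb{L}^2_{vort}$ is realized by $V_{vort}(\xi):=\left\{\xi\otimes a-a\otimes\xi:\ a\in\mathbb{R}^d,\ a\cdot\xi=0\right\}$, while $\mathbb{L}^2_{divfree}$ is realized by $V_{divfree}(\xi):=\left\{A\in\mathcal{A}^{d\times d}:\ A\xi=0\right\}$, the antisymmetric forms living on the hyperplane $\xi^{\perp}$.

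Next I would prove the fibrewise orthogonal decomposition $\mathcal{A}^{d\times d}=V_{vort}(\xi)\oplus V_{divfree}(\xi)$. Orthogonality in the Frobenius inner product is a one-line computation: for $a\cdot\xi=0$ and $A\in\mathcal{A}^{d\times d}$,
\begin{equation}
    \left\langle\xi\otimes a-a\otimes\xi,\ A\right\rangle
    =2\,a\cdot\left(\xi^{tr}A\right)
    =-2\,a\cdot(A\xi),
\end{equation}
using $\xi^{tr}A=-A\xi$ for antisymmetric $A$; this vanishes whenever $A\xi=0$. For the dimension count, the map $a\mapsto\xi\otimes a-a\otimes\xi$ is injective on $\xi^{\perp}$, since it sends $a$ to a matrix whose action on $\xi$ is $-|\xi|^{2}a$, so $\dim V_{vort}(\xi)=d-1$; and $V_{divfree}(\xi)$ is naturally isomorphic to the space of antisymmetric endomorphisms of the hyperplane $\xi^{\perp}$, so $\dim V_{divfree}(\xi)=\binom{d-1}{2}$. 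Since $(d-1)+\binom{d-1}{2}=\binom{d}{2}=\dim\mathcal{A}^{d\times d}$, the two orthogonal subspaces exhaust $\mathcal{A}^{d\times d}$. This dimension identity is the crux of the whole argument and essentially the only point that is not bookkeeping: it is what upgrades the known inclusion $\mathbb{L}^2_{vort}\subseteq\left(\mathbb{L}^2_{divfree}\right)^{\perp}$ to an equality, and the hypothesis $d\geq 2$ enters only through the need for $\xi^{\perp}\neq\{0\}$.

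To return from the fibres to $L^{2}$, I would note that the fibrewise orthogonal projections onto $V_{vort}(\xi)$ and $V_{divfree}(\xi)$ are Fourier multipliers that are homogeneous of degree zero and smooth away from the origin --- in fact the projection onto the vorticity part is realized by the operator $-2\,\nabla_{asym}(-\Delta)^{-1}\divr$ --- hence bounded on $L^{2}\left(\mathbb{R}^{d};\mathcal{A}^{d\times d}\right)$, so that together with Plancherel's theorem they yield the orthogonal decomposition $L^{2}\left(\mathbb{R}^{d};\mathcal{A}^{d\times d}\right)=\mathbb{L}^2_{vort}\oplus\mathbb{L}^2_{divfree}$. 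The one point needing separate comment is that $\mathbb{L}^2_{vort}$ is defined as a range rather than a kernel, so its closedness is not automatic; this follows from the identity $\left\|\nabla_{asym}(-\Delta)^{-\frac{1}{2}}v\right\|_{L^{2}}^{2}=\frac{1}{2}\|v\|_{L^{2}}^{2}$ for $v\in\mathbf{L}^2_{df}$ --- the antisymmetric counterpart of Proposition \ref{isometry}, obtained from the same Fourier computation together with $\|\nabla(-\Delta)^{-\frac{1}{2}}v\|_{L^{2}}^{2}=\|v\|_{L^{2}}^{2}$ --- which exhibits $\nabla_{asym}(-\Delta)^{-\frac{1}{2}}$ as $\frac{1}{\sqrt{2}}$ times an isometry on $\mathbf{L}^2_{df}$, so that its range is closed; $\mathbb{L}^2_{divfree}$ is closed as the kernel of $\divr(-\Delta)^{-\frac{1}{2}}$. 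Finally, the full five-term decomposition of $L^{2}\left(\mathbb{R}^{d};\mathbb{R}^{d\times d}\right)$ follows by combining the elementary orthogonal splitting into symmetric and antisymmetric parts with Theorem \ref{HelmholtzOneIntro} (together with its $d$-dimensional extension) applied to the symmetric summand and the decomposition just established applied to the antisymmetric summand.
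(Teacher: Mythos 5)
Your proposal is correct and follows essentially the same route as the paper: pass to the Fourier side and, for each fixed $\xi\neq 0$, split $\mathcal{A}^{d\times d}$ orthogonally into the fibre of vorticity matrices $\left\{\xi\otimes a-a\otimes\xi:\ a\perp\xi\right\}$ and the fibre $\left\{A:\ A\xi=0\right\}$, with the dimension count $(d-1)+\binom{d-1}{2}=\binom{d}{2}$ doing the real work, then combine with the symmetric/antisymmetric splitting and the symmetric decomposition. The only difference is one of completeness: you write out the orthogonality computation, the dimension count, and the closedness of the range $\mathbb{L}^2_{vort}$ via the isometry, all of which the paper leaves to the reader as ``elementary linear algebra.''
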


We will also note that in the special case $d=3$, the spaces $\mathbb{L}^2_{vort}$ and $\mathbb{L}^2_{divfree}$ can be parameterized by divergence free vector fields and gradients respectively, giving an isometry with the classical Helmholtz decomposition, Theorem \ref{HelmholtzOld}.

\section{The orthogonal decomposition}
\label{DecompositionSection}

In this section, we will derive the orthogonal decomposition for symmetric matrices in three dimensions.
We will begin with some Fourier analysis of the respective spaces, as the proof will be done entirely in Fourier space.

\begin{proposition} \label{FourierConstraint} 
For all $\xi \in \mathbb{R}^{3},\xi \neq 0,$ take $
\eta ,\mu \in \mathbb{R}^{3}$ to be unit vectors such that 
\begin{equation}
\mathcal{B}\left( \xi \right) =\left\{ \frac{\xi }{\left\vert \xi
\right\vert },\eta ,\mu \right\}
\end{equation}
is an orthonormal basis for $\mathbb{R}^{3}$.
Then for all $M\in\mathcal{L}^2$: 

\begin{itemize}

\item
$M\in \mathcal{L}_{st}^{2}$ if and only if
the representation of $\widehat{M}(\xi)$ in the basis $\mathcal{B}\left( \xi \right)$ satisfies
\begin{equation}
\left[ \widehat{M}\left( \xi \right)
\right]_{\mathcal{B}\left( \xi \right)}
\in \spn \left\{ \frac{1}{\sqrt{2}}\left[ 
\begin{array}{ccc}
0 & 1 & 0 \\ 
1 & 0 & 0 \\ 
0 & 0 & 0
\end{array}
\right] ,\frac{1}{\sqrt{2}}\left[ 
\begin{array}{ccc}
0 & 0 & 1 \\ 
0 & 0 & 0 \\ 
1 & 0 & 0
\end{array}
\right] \right\},
\end{equation}
almost everywhere $\xi \in \mathbb{R}^{3}$.

\item
$M\in \mathcal{L}_{Hess}^{2}$ if and only if
the representation of $\widehat{M}(\xi)$ in the basis $\mathcal{B}\left( \xi \right)$ satisfies
\begin{equation}
\left[ \widehat{M}\left( \xi \right)
\right]_{\mathcal{B}\left( \xi \right) }
\in \spn \left\{ \left[ 
\begin{array}{ccc}
1 & 0 & 0 \\ 
0 & 0 & 0 \\ 
0 & 0 & 0
\end{array}
\right] \right\},
\end{equation}
almost everywhere $\xi \in \mathbb{R}^{3}$.

\item
$M\in \mathcal{L}_{divfree}^{2}$ if and only if
the representation of $\widehat{M}(\xi)$ in the basis $\mathcal{B}\left( \xi \right)$ satisfies

\begin{equation}
\left[ \widehat{M}\left( \xi \right)
\right]_{\mathcal{B}\left( \xi \right)}
\in \spn \left\{ \frac{1}{\sqrt{2}}\left[ 
\begin{array}{ccc}
0 & 0 & 0 \\ 
0 & 1 & 0 \\ 
0 & 0 & 1
\end{array}
\right] ,\frac{1}{\sqrt{2}}\left[ 
\begin{array}{ccc}
0 & 0 & 0 \\ 
0 & 1 & 0 \\ 
0 & 0 & -1
\end{array}
\right] ,\frac{1}{\sqrt{2}}\left[ 
\begin{array}{ccc}
0 & 0 & 0 \\ 
0 & 0 & 1 \\ 
0 & 1 & 0
\end{array}
\right] \right\},
\end{equation}
almost everywhere $\xi \in \mathbb{R}^{3}$.
\end{itemize}
\end{proposition}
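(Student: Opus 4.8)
The plan is to carry out the entire argument on the Fourier transform side, where each operator defining $\mathcal{L}^2_{st}$, $\mathcal{L}^2_{Hess}$ and $\mathcal{L}^2_{divfree}$ becomes a pointwise-in-$\xi$ algebraic operation on $\widehat{M}(\xi)\in\mathcal{S}^{3\times3}$. First I would record the Fourier symbols: for $u\in\mathbf{L}^2$, $\widehat{\nabla_{sym}u}(\xi)=\frac{i}{2}\left(\xi\otimes\widehat{u}(\xi)+\widehat{u}(\xi)\otimes\xi\right)$; for $f\in L^2$, $\widehat{\Hess f}(\xi)=-\xi\otimes\xi\,\widehat{f}(\xi)$; for symmetric $M$, $\widehat{\divr M}(\xi)=i\,\widehat{M}(\xi)\xi$; and $(-\Delta)^{-s}$ is multiplication by $|\xi|^{-2s}$. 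In particular $u\in\mathbf{L}^2_{df}$ if and only if $\widehat{u}(\xi)\cdot\xi=0$ for almost every $\xi$. I would also note at the outset that, writing $\widehat{\xi}=\xi/|\xi|$, each of the three span conditions in the statement is independent of the non-canonical choice of $\eta,\mu$: the Hessian condition says $\widehat{M}(\xi)=c(\xi)\,\widehat{\xi}\otimes\widehat{\xi}$, the strain condition says $\widehat{M}(\xi)=\widehat{\xi}\otimes w(\xi)+w(\xi)\otimes\widehat{\xi}$ for some $w(\xi)\perp\xi$, and the divergence-free condition says $\widehat{M}(\xi)\xi=0$. This intrinsic rephrasing is what will let us avoid any measurable-selection argument for $\eta,\mu$.

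The Hessian and divergence-free cases are then short. For $\mathcal{L}^2_{Hess}$: if $M=\Hess(-\Delta)^{-1}f$ then $\widehat{M}(\xi)=-|\xi|^{-2}\,\xi\otimes\xi\,\widehat{f}(\xi)=-\widehat{f}(\xi)\,\widehat{\xi}\otimes\widehat{\xi}$, whose representation in $\mathcal{B}(\xi)$ is the top-left rank-one model matrix scaled by $-\widehat{f}(\xi)$; conversely, if $[\widehat{M}(\xi)]_{\mathcal{B}(\xi)}$ is a scalar multiple $g(\xi)$ of that model matrix, then $\widehat{M}(\xi)=g(\xi)\,\widehat{\xi}\otimes\widehat{\xi}$ and $\widehat{f}:=-g$ gives $M=\Hess(-\Delta)^{-1}f$ with $\|f\|_{L^2}=\|g\|_{L^2}=\|M\|_{L^2}<\infty$, the last equality because the change of basis is Frobenius-norm preserving and $|\widehat{\xi}\otimes\widehat{\xi}|=1$. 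For $\mathcal{L}^2_{divfree}$: $M\in\mathcal{L}^2_{divfree}$ if and only if $i|\xi|^{-1}\widehat{M}(\xi)\xi=0$ a.e., i.e.\ $\widehat{M}(\xi)\widehat{\xi}=0$, i.e.\ (using that $\widehat{M}$ is symmetric) the first row and first column of $[\widehat{M}(\xi)]_{\mathcal{B}(\xi)}$ both vanish, i.e.\ $[\widehat{M}(\xi)]_{\mathcal{B}(\xi)}$ is a symmetric matrix supported in the lower-right $2\times2$ block; the three displayed matrices form an orthonormal basis of that $3$-dimensional space.

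The main case is $\mathcal{L}^2_{st}$. If $M=\nabla_{sym}(-\Delta)^{-\frac12}u$ with $u\in\mathbf{L}^2_{df}$, then $\widehat{M}(\xi)=\frac{i}{2}\bigl(\widehat{\xi}\otimes\widehat{u}(\xi)+\widehat{u}(\xi)\otimes\widehat{\xi}\bigr)$, and since $\widehat{u}(\xi)\perp\xi$ we write $\widehat{u}(\xi)=\widehat{u}_\eta(\xi)\,\eta+\widehat{u}_\mu(\xi)\,\mu$; expanding in $\mathcal{B}(\xi)$ gives $[\widehat{M}(\xi)]_{\mathcal{B}(\xi)}=\frac{i}{\sqrt2}\widehat{u}_\eta(\xi)\,A+\frac{i}{\sqrt2}\widehat{u}_\mu(\xi)\,B$, where $A,B$ are the two displayed strain model matrices, so it lies in their span. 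Conversely, given $[\widehat{M}(\xi)]_{\mathcal{B}(\xi)}=\alpha(\xi)A+\beta(\xi)B$, I would avoid $\eta,\mu$ entirely and set, intrinsically, $\widehat{u}(\xi):=-2i\,\widehat{M}(\xi)\widehat{\xi}$. This is measurable, satisfies $\widehat{u}(\xi)\cdot\xi=0$ (since the $(1,1)$ entry of $[\widehat{M}(\xi)]_{\mathcal{B}(\xi)}$ is $0$), and has $\|u\|_{L^2}^2=\int 4\,|\widehat{M}(\xi)\widehat{\xi}|^2\,\diff\xi=2\int\bigl(|\alpha|^2+|\beta|^2\bigr)\diff\xi=2\|M\|_{L^2}^2<\infty$ by Plancherel and the orthonormality of $A,B$ (consistent with Proposition \ref{isometry}); one then checks directly that $\nabla_{sym}(-\Delta)^{-\frac12}u$ has Fourier transform $\widehat{M}$, so $M\in\mathcal{L}^2_{st}$.

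The one point that needs genuine care --- and where I expect any obstacle to lie --- is precisely this passage from a pointwise algebraic description of $\widehat{M}(\xi)$ to the existence of an honest $L^2$ potential ($f$ or $u$): a priori the coordinates of $\widehat{M}(\xi)$ relative to $\mathcal{B}(\xi)$ are only defined after a non-continuous choice of $\eta(\xi),\mu(\xi)$, so a naive converse would produce a potential of dubious measurability. Writing the candidate potential intrinsically in terms of $\widehat{M}(\xi)$ and $\xi$ removes this, and Plancherel together with the fact that $\xi\mapsto\mathcal{B}(\xi)$ is pointwise orthonormal keeps all $L^2$ norms finite. I would close by remarking that in the basis $\mathcal{B}(\xi)$ the three model subspaces of $\mathcal{S}^{3\times3}$ are mutually orthogonal with dimensions $2+1+3=6=\dim\mathcal{S}^{3\times3}$, which is exactly the pointwise statement underlying the orthogonal decomposition of Theorem \ref{HelmholtzOneIntro}.
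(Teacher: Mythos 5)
Your proposal is correct and follows essentially the same route as the paper: compute the Fourier symbols of $\nabla_{sym}(-\Delta)^{-1/2}$, $\Hess(-\Delta)^{-1}$, and $\divr$, and identify each space pointwise in $\xi$ with the corresponding subspace of $\mathcal{S}^{3\times 3}$ in the basis $\mathcal{B}(\xi)$. The one place you go beyond the paper is the converse direction for $\mathcal{L}^2_{st}$, where your intrinsic definition $\widehat{u}(\xi):=-2i\,\widehat{M}(\xi)\xi/|\xi|$ cleanly sidesteps the measurable-selection issue for $\eta(\xi),\mu(\xi)$ that the paper's coordinate-based recovery of $a,b$ leaves implicit.
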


\begin{proof}
We begin by fixing $M\in \mathcal{L}_{st}^{2},$ 
taking $u\in \mathbf{L}^2_{df} $ such that 
$M=\nabla_{sym}(-\Delta)^{-\frac{1}{2}}u.$ 
First we will need to write the
divergence free constraint in Fourier spaces terms: $\nabla \cdot u=0,$
implies that for all $\xi \in \mathbb{R}^{3},$ $\xi \cdot \hat{u}(\xi )=0.$
Therefore there exists $a,b\in \mathbb{C},$ such that $\hat{u}(\xi )=a\eta
+b\mu .$ This implies that 
\begin{align}
\widehat{(\nabla u)}(\xi )& =2\pi i\xi \otimes \hat{u}(\xi ) \\
& =2\pi ai\xi \otimes \eta +2\pi bi\xi \otimes \mu .
\end{align}
Symmetrizing this expression and 
dividing by $2\pi|\xi|$ we find that 
\begin{equation}
\widehat{M}(\xi )= \frac{ia}{2}\left( \frac{\xi }{|\xi |}\otimes \eta +\eta
\otimes \frac{\xi }{|\xi |}\right) +\frac{ib}{2} \left( \frac{\xi }{|\xi |}
\otimes \mu +\mu \otimes \frac{\xi }{|\xi |}\right) .
\end{equation}
Therefore we can see that 
\begin{equation}
\left[ \widehat{M}\left( \xi \right)
\right]_{\mathcal{B}\left( \xi \right)}
=
\frac{i}{2} \left( 
\begin{array}{ccc}
0 & a & b \\ 
a & 0 & 0 \\ 
b & 0 & 0
\end{array}
\right) ,
\end{equation}
and so we may conclude that 
\begin{equation}
\left[ \widehat{M}\left( \xi \right)
\right]_{\mathcal{B}\left( \xi \right)}
\in \spn
\left\{ \frac{1}{\sqrt{2}}\left( 
\begin{array}{ccc}
0 & 1 & 0 \\ 
1 & 0 & 0 \\ 
0 & 0 & 0
\end{array}
\right) ,\frac{1}{\sqrt{2}}\left( 
\begin{array}{ccc}
0 & 0 & 1 \\ 
0 & 0 & 0 \\ 
1 & 0 & 0
\end{array}
\right) \right\} .
\end{equation}

Now suppose that $\widehat{M}\in \mathcal{L}^{2}$ and 
\begin{equation}
\left[ \widehat{M}\left( \xi \right)
\right]_{\mathcal{B}\left( \xi \right)}
\in \spn
\left\{ \frac{1}{\sqrt{2}}\left( 
\begin{array}{ccc}
0 & 1 & 0 \\ 
1 & 0 & 0 \\ 
0 & 0 & 0
\end{array}
\right) ,\frac{1}{\sqrt{2}}\left( 
\begin{array}{ccc}
0 & 0 & 1 \\ 
0 & 0 & 0 \\ 
1 & 0 & 0
\end{array}
\right) \right\} .
\end{equation}
Following the arguments above we can see that for almost all $\xi \in 
\mathbb{R}^{3},$ 
\begin{equation}
\left[ \widehat{M}\left( \xi \right)
\right]_{\mathcal{B}\left( \xi \right)}
=i\left( 
\begin{array}{ccc}
0 & a & b \\ 
a & 0 & 0 \\ 
b & 0 & 0
\end{array}
\right) ,
\end{equation}
and so we can conclude 
\begin{equation}
\widehat{M}(\xi )=ia \left( \frac{\xi }{|\xi |}\otimes \eta +\eta
\otimes \frac{\xi }{|\xi |}\right) 
+ib \left( \frac{\xi }{|\xi |}
\otimes \mu +\mu \otimes \frac{\xi }{|\xi |}\right) ,
\end{equation}
where $a,b\in L^{2}\left( \mathbb{R}^{3}\right) .$ Letting 
\begin{equation}
\hat{u}(\xi )=2a\eta +2b\mu,
\end{equation}
observe that 
\begin{equation}
\mathcal{F}{(\nabla _{sym}(-\Delta)^{-\frac{1}{2}}u)} (\xi)
= ia\left( \frac{\xi }{|\xi |}
\otimes \eta +\eta \otimes \frac{\xi }{|\xi |}\right) +ib\left( 
\frac{\xi }{|\xi |}\otimes \mu +\mu \otimes \frac{\xi }{|\xi |}\right).
\end{equation}
This implies that 
\begin{equation}
    M=\nabla_{sym}(-\Delta)^{-\frac{1}{2}}u.
\end{equation}
Noting that
\begin{align}
    \left|\hat{u}(\xi)\right|^2
    &=4a^2+4b^2 \\
    &= 2\left|\hat{M}(\xi)\right|^2
\end{align}
and 
\begin{equation}
\xi \cdot \hat{u}(\xi )=0,
\end{equation}
we can conclude that $u\in \mathbf{L}_{df}^{2}$ and so
$M\in \mathcal{L}_{st}^{2}.$

Next we consider $M\in \mathcal{L}_{Hess}^{2},$ taking $M=\Hess(-\Delta
)^{-1}f.$ Taking the Fourier transform we find 
\begin{align}
\widehat{M}(\xi )& =2\pi i\xi \otimes 2\pi i\xi \frac{1}{4\pi ^{2}|\xi |^{2}}
\hat{f}(\xi ) \\
& =-\frac{\xi }{|\xi |}\otimes \frac{\xi }{|\xi |}\hat{f}(\xi ).
\end{align}
Therefore we can see that 
\begin{equation}
\left[ \widehat{M}\left( \xi \right)
\right]_{\mathcal{B}\left( \xi \right)}
=-\hat{f}
(\xi )\left( 
\begin{array}{ccc}
1 & 0 & 0 \\ 
0 & 0 & 0 \\ 
0 & 0 & 0
\end{array}
\right) ,
\end{equation}
and so we may conclude that 
\begin{equation}
\left[ \widehat{M}\left( \xi \right)
\right]_{\mathcal{B}\left( \xi \right)}
\in \spn
\left\{ \left( 
\begin{array}{ccc}
1 & 0 & 0 \\ 
0 & 0 & 0 \\ 
0 & 0 & 0
\end{array}
\right) \right\} .
\end{equation}

Now consider $\widehat{M}\in \mathcal{L}^{2},$ 
\begin{equation}
\left[ \widehat{M}\left( \xi \right)
\right]_{\mathcal{B}\left( \xi \right)}
\in \spn
\left\{ \left( 
\begin{array}{ccc}
1 & 0 & 0 \\ 
0 & 0 & 0 \\ 
0 & 0 & 0
\end{array}
\right) \right\} .
\end{equation}
Then there is an $a\in L^{2},$ such that 
\begin{equation}
\widehat{M}(\xi )=-a\frac{\xi }{|\xi |}\otimes \frac{\xi }{|\xi |}.
\end{equation}
Taking the inverse Fourier transform of f, we set 
\begin{equation}
\hat{f}=a,
\end{equation}
so 
\begin{equation*}
\hat{M}(\xi )=-\hat{f}(\xi )\frac{\xi }{|\xi |}\otimes \frac{\xi }{|\xi |}.
\end{equation*}
Therefore we can conclude that 
\begin{equation}
M=\Hess(-\Delta )^{-1}f,
\end{equation}
and so $M\in \mathcal{L}_{Hess}^{2}$

Finally we consider $M\in \mathcal{L}_{divfree}^{2}.$ We first observe that
for all $j\in \{1,2,3\},$ 
\begin{equation}
\divr(M)_{j}=\sum_{k=1}^{3}\partial _{k}M_{jk},
\end{equation}
noting that the row and column divergence of a matrix is equivalent for symmetric matrices.
Taking the Fourier transform of this identity we find that 
\begin{equation}
\widehat{\divr(M)}_{j}=2\pi i\sum_{k=1}^{3}\widehat{M}_{jk}\xi _{k},
\end{equation}
so we can conclude that 
\begin{equation*}
\widehat{\divr(M)}=2\pi i\widehat{M}\xi .
\end{equation*}
Therefore the condition $\divr(M)=0,$ can be restated in Fourier space as $
\widehat{M}\xi =0,$ for all $\xi \in \mathbb{R}^{3}.$ This implies that the
first column, and hence by symmetry the first row, of $\left[ \widehat{M}(\xi )
\right]_{\mathcal{B}\left(\xi\right)}$
is zero. Therefore we may conclude that 
\begin{equation}
\left[ \widehat{M}\left( \xi \right)
\right]_{\mathcal{B}\left( \xi \right)} 
=\left( 
\begin{array}{ccc}
0 & 0 & 0 \\ 
0 & a & c \\ 
0 & c & b
\end{array}
\right).
\end{equation}
Note that 
\begin{equation}
\left( 
\begin{array}{ccc}
0 & 0 & 0 \\ 
0 & a & c \\ 
0 & c & b
\end{array}
\right) =\frac{a+b}{2}\left( 
\begin{array}{ccc}
0 & 0 & 0 \\ 
0 & 1 & 0 \\ 
0 & 0 & 1
\end{array}
\right) +\frac{a-b}{2}\left( 
\begin{array}{ccc}
0 & 0 & 0 \\ 
0 & 1 & 0 \\ 
0 & 0 & -1
\end{array}
\right) +c\left( 
\begin{array}{ccc}
0 & 0 & 0 \\ 
0 & 0 & 1 \\ 
0 & 1 & 0
\end{array}
\right) ,
\end{equation}
and we may conclude that 
\begin{equation}
\left[ \widehat{M}\left( \xi \right)
\right]_{\mathcal{B}\left( \xi \right)}
\in \spn
\left\{ \frac{1}{\sqrt{2}}\left( 
\begin{array}{ccc}
0 & 0 & 0 \\ 
0 & 1 & 0 \\ 
0 & 0 & 1
\end{array}
\right) ,\frac{1}{\sqrt{2}}\left( 
\begin{array}{ccc}
0 & 0 & 0 \\ 
0 & 1 & 0 \\ 
0 & 0 & -1
\end{array}
\right) ,\frac{1}{\sqrt{2}}\left( 
\begin{array}{ccc}
0 & 0 & 0 \\ 
0 & 0 & 1 \\ 
0 & 1 & 0
\end{array}
\right) \right\} .
\end{equation}

Now suppose that $\hat{M}\in \mathcal{L}^{2}$ and 
\begin{equation}
\left[ \widehat{M}\left( \xi \right)
\right]_{\mathcal{B}\left( \xi \right)}
\in \spn
\left\{ \frac{1}{\sqrt{2}}\left( 
\begin{array}{ccc}
0 & 0 & 0 \\ 
0 & 1 & 0 \\ 
0 & 0 & 1
\end{array}
\right) ,\frac{1}{\sqrt{2}}\left( 
\begin{array}{ccc}
0 & 0 & 0 \\ 
0 & 1 & 0 \\ 
0 & 0 & -1
\end{array}
\right) ,\frac{1}{\sqrt{2}}\left( 
\begin{array}{ccc}
0 & 0 & 0 \\ 
0 & 0 & 1 \\ 
0 & 1 & 0
\end{array}
\right) \right\} .
\end{equation}
We can clearly see that for almost all $\xi \in \mathbb{R}^{3}$ 
\begin{equation}
\left[ \widehat{M}\left( \xi \right)
\right]_{\mathcal{B}\left( \xi \right)}
e_{1}=0,
\end{equation}
which implies that 
\begin{equation}
\widehat{M}(\xi )\xi =0.
\end{equation}
As we have shown above, this is the exactly the condition $\divr(M)=0,$
written in Fourier space terms, so we can conclude that $M\in \mathcal{L}_{divfree}^{2}.$ This completes the proof.
\end{proof}

\begin{lemma}
\label{LemmaSym6} The set of matrices 
\begin{equation}
\begin{split}
\mathcal{B}= \Bigg \{ \left(
\begin{array}{ccc}
1 & 0 & 0 \\ 
0 & 0 & 0 \\ 
0 & 0 & 0
\end{array}
\right), \frac{1}{\sqrt{2}} \left(
\begin{array}{ccc}
0 & 1 & 0 \\ 
1 & 0 & 0 \\ 
0 & 0 & 0
\end{array}
\right), \frac{1}{\sqrt{2}} \left(
\begin{array}{ccc}
0 & 0 & 1 \\ 
0 & 0 & 0 \\ 
1 & 0 & 0
\end{array}
\right), \\
\frac{1}{\sqrt{2}} \left(
\begin{array}{ccc}
0 & 0 & 0 \\ 
0 & 1 & 0 \\ 
0 & 0 & 1
\end{array}
\right), \frac{1}{\sqrt{2}} \left(
\begin{array}{ccc}
0 & 0 & 0 \\ 
0 & 1 & 0 \\ 
0 & 0 & -1
\end{array}
\right), \frac{1}{\sqrt{2}} \left(
\begin{array}{ccc}
0 & 0 & 0 \\ 
0 & 0 & 1 \\ 
0 & 1 & 0
\end{array}
\right) \Bigg\}
\end{split}
\end{equation}
is an orthonormal basis for the space of symmetric matrices, 
\begin{equation}
\mathcal{S}^{3\times 3}_{\mathbb{C}}
=\left\{ \left(
\begin{array}{ccc}
a & d & e \\ 
d & b & f \\ 
e & f & c
\end{array}
\right): a,b,c,d,e,f \in \mathbb{C} \right\}.
\end{equation}
\end{lemma}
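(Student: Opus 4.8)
The plan is to verify directly the three properties that make $\mathcal{B}$ an orthonormal basis: it has the right cardinality, its elements are orthonormal for the Hilbert--Schmidt (Frobenius) inner product $\langle A,B\rangle=\tr\!\left(AB^{*}\right)=\sum_{i,j}A_{ij}\overline{B_{ij}}$, and it spans. First I would record that $\mathcal{S}^{3\times 3}_{\mathbb{C}}$ has complex dimension $6$, since a symmetric matrix is freely determined by its three diagonal entries $a,b,c$ and its three strictly-upper entries $d,e,f$. Consequently it is enough to exhibit six orthonormal elements: an orthonormal set is automatically linearly independent, and a linearly independent set of size equal to the dimension is automatically a basis.

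Next I would check orthonormality by a short case analysis on supports. Write $E_{ij}$ for the matrix unit with a single $1$ in position $(i,j)$. The first element $E_{11}$ has norm $1$; each of the other five elements has exactly two nonzero entries of modulus $1/\sqrt{2}$, hence squared norm $\tfrac12+\tfrac12=1$. For orthogonality, group the six matrices by the location of their nonzero entries: $E_{11}$ is supported at $(1,1)$; the three ``off-diagonal'' generators $\tfrac{1}{\sqrt2}(E_{12}+E_{21})$, $\tfrac{1}{\sqrt2}(E_{13}+E_{31})$, $\tfrac{1}{\sqrt2}(E_{23}+E_{32})$ have pairwise disjoint supports, each disjoint from $(1,1)$ and from $\{(2,2),(3,3)\}$; and the two ``diagonal'' generators $\tfrac{1}{\sqrt2}(E_{22}+E_{33})$, $\tfrac{1}{\sqrt2}(E_{22}-E_{33})$ are supported on $\{(2,2),(3,3)\}$. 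Every pair drawn from different groups, and every pair of off-diagonal generators, therefore has disjoint support and inner product $0$. The only pair with overlapping support is $\bigl(\tfrac{1}{\sqrt2}(E_{22}+E_{33}),\tfrac{1}{\sqrt2}(E_{22}-E_{33})\bigr)$, whose inner product is $\tfrac12\bigl(1\cdot1+1\cdot(-1)\bigr)=0$. This exhausts all $\binom{6}{2}$ pairs.

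With orthonormality of six vectors in a six-dimensional space in hand, the basis property follows; alternatively one may also exhibit the expansion explicitly, writing an arbitrary symmetric matrix with diagonal $a,b,c$ and off-diagonal entries $d,e,f$ as $aE_{11}+\sqrt2\,d\cdot\tfrac{1}{\sqrt2}(E_{12}+E_{21})+\sqrt2\,e\cdot\tfrac{1}{\sqrt2}(E_{13}+E_{31})+\sqrt2\,f\cdot\tfrac{1}{\sqrt2}(E_{23}+E_{32})+\tfrac{b+c}{\sqrt2}\cdot\tfrac{1}{\sqrt2}(E_{22}+E_{33})+\tfrac{b-c}{\sqrt2}\cdot\tfrac{1}{\sqrt2}(E_{22}-E_{33})$, using $\tfrac{b+c}{2}(E_{22}+E_{33})+\tfrac{b-c}{2}(E_{22}-E_{33})=bE_{22}+cE_{33}$. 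There is no real obstacle here: the lemma is a bookkeeping step that assembles the three spanning sets of Proposition~\ref{FourierConstraint} into one orthonormal frame for $\mathcal{S}^{3\times3}_{\mathbb{C}}$, and the only point requiring a moment's attention is the cancellation in the single overlapping pair of diagonal generators --- which is precisely why the combination $E_{22}-E_{33}$ is used rather than, say, $E_{33}$ alone.
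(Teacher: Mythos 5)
Your proof is correct and follows exactly the route the paper takes, namely checking that the six matrices are orthonormal for the Frobenius inner product and then invoking $\dim_{\mathbb{C}}\mathcal{S}^{3\times 3}_{\mathbb{C}}=6$; the paper merely asserts this is a trivial linear-algebra exercise, while you have written out the details (the disjoint-support case analysis and the one overlapping pair).
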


\begin{proof}
The proof is a trivial linear algebra exercise. It is easy to check that the matrices in $\mathcal{B}$ are orthonormal, and it is clear that $\dim\left(
\mathcal{S}^{3\times 3}\right)=6$. This means any set of $6$ orthonormal
matrices is an orthonormal basis, and so this completes the proof.
\end{proof}

Now that the preliminary Fourier analysis is out of the way, we will prove Theorem \ref{HelmholtzOneIntro}, which is restated for the reader's convenience.

\begin{theorem} \label{HelmholtzOne}
We have the orthogonal decomposition
\begin{equation}
\mathcal{L}^2=\mathcal{L}^2_{st} \oplus \mathcal{L}^2_{Hess} \oplus 
\mathcal{L}^2_{divfree}
\end{equation}
\end{theorem}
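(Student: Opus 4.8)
The plan is to carry out the entire argument on the Fourier transform side, where Proposition~\ref{FourierConstraint} has already reduced membership in the three subspaces to pointwise linear-algebraic conditions on $\widehat{M}(\xi)$. The crucial observation is that, relative to the orthonormal basis $\mathcal{B}$ of $\mathcal{S}^{3\times 3}_{\mathbb{C}}$ from Lemma~\ref{LemmaSym6}, the characterizations in Proposition~\ref{FourierConstraint} assert precisely that $\mathcal{L}^2_{st}$, $\mathcal{L}^2_{Hess}$, and $\mathcal{L}^2_{divfree}$ correspond fiberwise to the spans of three \emph{complementary} subsets of $\mathcal{B}$: the two off-diagonal generators involving $\frac{\xi}{|\xi|}$ for $\mathcal{L}^2_{st}$, the single generator $\frac{\xi}{|\xi|}\otimes\frac{\xi}{|\xi|}$ for $\mathcal{L}^2_{Hess}$, and the remaining three generators (those supported on the $\eta,\mu$ block) for $\mathcal{L}^2_{divfree}$. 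Since these $2+1+3$ generators exhaust $\mathcal{B}$, we obtain for almost every $\xi\neq 0$ the pointwise orthogonal decomposition
\begin{equation}
    \mathcal{S}^{3\times 3}_{\mathbb{C}}=V_{st}(\xi)\oplus V_{Hess}(\xi)\oplus V_{divfree}(\xi),
\end{equation}
where each $V_\bullet(\xi)$ is the corresponding span.

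The second step is to promote this pointwise statement to an $\mathcal{L}^2$ statement by exhibiting the three fiber projections as genuine Fourier multipliers. Although the auxiliary frame $\{\eta,\mu\}$ cannot be chosen continuously in $\xi$, the three projections do not depend on that choice: writing $\omega=\tfrac{\xi}{|\xi|}$, the projection onto $V_{Hess}(\xi)$ is $N\mapsto (\omega^{tr}N\omega)\,\omega\otimes\omega$, the projection onto $V_{divfree}(\xi)$ is $N\mapsto N-\omega\otimes(N\omega)-(N\omega)\otimes\omega+(\omega^{tr}N\omega)\,\omega\otimes\omega$, and the projection onto $V_{st}(\xi)$ is the remaining part $N\mapsto \omega\otimes(N\omega)+(N\omega)\otimes\omega-2(\omega^{tr}N\omega)\,\omega\otimes\omega$. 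Each of these is a symmetric-matrix-valued multiplier that is homogeneous of degree $0$, smooth away from the origin, and of fiberwise operator norm at most $1$, so each defines a bounded operator $P_{st}$, $P_{Hess}$, $P_{divfree}$ on $\mathcal{L}^2$ by $\widehat{P_\bullet M}(\xi)=P_\bullet(\xi)\widehat{M}(\xi)$, with boundedness immediate from Plancherel. By the pointwise decomposition these operators are mutually orthogonal idempotents summing to the identity on each fiber, hence on $\mathcal{L}^2$; and because Proposition~\ref{FourierConstraint} is stated as an ``if and only if,'' we get directly that $P_{st}M\in\mathcal{L}^2_{st}$, $P_{Hess}M\in\mathcal{L}^2_{Hess}$, and $P_{divfree}M\in\mathcal{L}^2_{divfree}$ for every $M\in\mathcal{L}^2$, so every $M$ is a sum of three elements, one from each subspace.

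Finally, orthogonality of the decomposition follows from fiberwise orthogonality together with Plancherel: for $M\in\mathcal{L}^2_{st}$ and $N\in\mathcal{L}^2_{Hess}$ (and similarly for the other two pairings) one has $\langle M,N\rangle_{\mathcal{L}^2}=\int_{\mathbb{R}^3}\langle \widehat{M}(\xi),\widehat{N}(\xi)\rangle\,\diff\xi=0$, since $\widehat{M}(\xi)\in V_{st}(\xi)\perp V_{Hess}(\xi)\ni\widehat{N}(\xi)$ for almost every $\xi$; the same computation shows the three subspaces are closed, being the ranges of the orthogonal projections $P_\bullet$. I expect the only genuine subtlety to be the well-definedness issue just addressed---namely verifying that the fiber projections are intrinsic Fourier multipliers independent of the non-canonical frame $\{\eta,\mu\}$, so that no measurable-selection argument for $\eta,\mu$ is needed---since once that is in place the decomposition, orthogonality, and closedness are all immediate. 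The set $\{\xi=0\}$ has Lebesgue measure zero and plays no role.
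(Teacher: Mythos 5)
Your proof is correct and follows essentially the same Fourier-side strategy as the paper: use Proposition~\ref{FourierConstraint} and Lemma~\ref{LemmaSym6} to get a pointwise orthogonal decomposition of $\mathcal{S}^{3\times 3}_{\mathbb{C}}$, then invoke Plancherel for orthogonality of the three subspaces. Your explicit frame-independent multiplier formulas (in terms of $\omega=\xi/|\xi|$ alone) are a nice touch that cleanly sidesteps the measurable-selection issue for the auxiliary frame $\{\eta,\mu\}$, which the paper's coordinate-based argument glides over.
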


\begin{proof}
We will start by proving that 
\begin{equation}
\mathcal{L}^{2}=\mathcal{L}_{st}^{2}+\mathcal{L}_{Hess}^{2}+\mathcal{L}
_{divfree}^{2}.
\end{equation}
Fix $M\in \mathcal{L}^{2}.$ Let 
\begin{align}
V_{1}& =\spn\left\{ \frac{1}{\sqrt{2}}\left( 
\begin{array}{ccc}
0 & 1 & 0 \\ 
1 & 0 & 0 \\ 
0 & 0 & 0
\end{array}
\right) ,\frac{1}{\sqrt{2}}\left( 
\begin{array}{ccc}
0 & 0 & 1 \\ 
0 & 0 & 0 \\ 
1 & 0 & 0
\end{array}
\right) \right\} , \\
V_{2}& =\spn\left\{ \left( 
\begin{array}{ccc}
1 & 0 & 0 \\ 
0 & 0 & 0 \\ 
0 & 0 & 0
\end{array}
\right) \right\} , \\
V_{3}& =\spn\left\{ \frac{1}{\sqrt{2}}\left( 
\begin{array}{ccc}
0 & 0 & 0 \\ 
0 & 1 & 0 \\ 
0 & 0 & 1
\end{array}
\right) ,\frac{1}{\sqrt{2}}\left( 
\begin{array}{ccc}
0 & 0 & 0 \\ 
0 & 1 & 0 \\ 
0 & 0 & -1
\end{array}
\right) ,\frac{1}{\sqrt{2}}\left( 
\begin{array}{ccc}
0 & 0 & 0 \\ 
0 & 0 & 1 \\ 
0 & 1 & 0
\end{array}
\right) \right\}.
\end{align}
From Lemma \ref{LemmaSym6}, we know that 
\begin{equation}
\mathcal{S}^{3 \times 3}
=V_{1}\oplus V_{2}\oplus V_{3}.
\end{equation}
Therefore for each $\xi \in \mathbb{R}^{3},$ there exists a unique $\Tilde{M}^{1}(\xi )\in
V_{1},\Tilde{M}^{2}(\xi )\in V^{2},\Tilde{M}^{3}(\xi )\in V_{3},$ such that 
\begin{equation}
\left[ \widehat{M}(\xi)
\right]_{\mathcal{B}(\xi)} 
=\Tilde{M}^{1}+\Tilde{M}^{2}+
\Tilde{M}^{3}
\end{equation}
Inverting the Fourier transform,
take $M^{1},M^{2},M^{3}\in\mathcal{L}^2$ such that 
\begin{align}
\left[ \widehat{M}^{1}\right] _{\mathcal{B}(\xi )}& =\Tilde{M^{1}} \\
\left[ \widehat{M}^{2}\right] _{\mathcal{B}(\xi )}& =\Tilde{M^{2}} \\
\left[ \widehat{M}^{3}\right] _{\mathcal{B}(\xi )}& =\Tilde{M^{3}}.
\end{align}
Clearly we will then have
\begin{equation}
M=M^{1}+M^{2}+M^{3},
\end{equation}
and by Proposition \ref{FourierConstraint}, we can see that $M^{1}\in 
\mathcal{L}_{st}^{2},M^{2}\in \mathcal{L}_{Hes}^{2},M^{3}\in \mathcal{L}
_{divfree}^{2}.$ Therefore we can conclude that 
\begin{equation}
\mathcal{L}^{2}=\mathcal{L}_{st}^{2}+\mathcal{L}_{Hess}^{2}+\mathcal{L}
_{divfree}^{2}.
\end{equation}

It remains to prove orthogonality. Using the fact that the Fourier transform
is an isometry, and that the matrix inner product is independent of the
choice of orthonormal basis, we compute that for $M,S\in \mathcal{L}^{2}$ 
\begin{align}
\left\langle S,M\right\rangle & =\left\langle \widehat{S},\widehat{M}
\right\rangle \\
& =\int_{\mathbb{R}^{3}}
\left[ \widehat{M}(\xi )
\right]_{\mathcal{B}(\xi }
\odot \left[ \bar{\widehat{S}}(\xi )
\right]_{\mathcal{B}(\xi )} \diff\xi.
\end{align}
This implies that the orthogonality of $V_{1},V_{2},V_{3},$ together with
Proposition \ref{FourierConstraint}, which states the image of the Fourier
transform with respect to the basis $\mathcal{B}(\xi )$ must be in $
V_{1},V_{2},V_{3}$ for $\mathcal{L}_{st}^{2},\mathcal{L}_{Hess}^{2},
\mathcal{L}_{divfree}^{2}$ respectively, implies that $\mathcal{L}
_{st}^{2},\mathcal{L}_{Hess}^{2},\mathcal{L}_{divfree}^{2}$ are
orthogonal. This completes the proof.
\end{proof}

\begin{remark}
While Theorem \ref{HelmholtzOne} is now proven, most of the differential structure of the spaces has been hidden in the way we picked our basis for the Fourier transform for each $\xi\in\mathbb{R}^3$,
so it is worthwhile to go through the 
formal computations showing that 
$\mathcal{L}^2_{st}, \mathcal{L}^2_{Hess},
\mathcal{L}^2_{divfree}$
are orthogonal. The proof that their sum space is all of $\mathcal{L}^2$ is much harder on the physical space side, because we cannot simply count dimensions, 
which is all the proof of the sum space result amounts to on the Fourier space side, once the choice of
basis $\mathcal{B}(\xi)$ reduces the problem to a six dimensional linear algebra problem. 
The orthogonality, on the other hand, can be understood quite easily by integrating by parts formally, which we will do now.

Fix $S\in \mathcal{L}_{st}^{2},M\in \mathcal{L}_{divfree}^{2},$ and let $
S=\nabla _{sym}(-\Delta)^{-\frac{1}{2}}u,$ with $u\in \mathbf{L}^2_{df}.$ Then we can compute that 
\begin{align}
\left\langle S,M\right\rangle & =\left\langle \nabla _{sym}(-\Delta)^{-\frac{1}{2}}u,M\right\rangle
\\
&=\left\langle \nabla (-\Delta)^{-\frac{1}{2}} u
,M\right\rangle \\
&=\left\langle u,-\divr(-\Delta)^{-\frac{1}{2}} M\right\rangle \\
& =0.
\end{align}
Fix $S\in \mathcal{L}_{st}^{2},M\in \mathcal{L}_{Hess}^{2},$ 
and let 
$S=\nabla_{sym}(-\Delta)^{-\frac{1}{2}}u,M=\Hess(-\Delta)^{-1}f.$ Then we compute that 
\begin{align}
\left\langle S,M\right\rangle & =\left\langle 
\nabla_{sym}(-\Delta)^{-\frac{1}{2}}u,
\Hess(-\Delta)^{-1}f\right\rangle \\
&=\left\langle \nabla(-\Delta)^{-\frac{1}{2}}u,
\Hess(-\Delta)^{-1}f\right\rangle \\
&=\left\langle u,-\divr\Hess(-\Delta)^{-\frac{3}{2}}f
\right\rangle \\
&=\left\langle u,\nabla (-\Delta)^{-\frac{1}{2}} f\right\rangle \\
&=-\left\langle \nabla \cdot (-\Delta)^{-\frac{1}{2}} u,
f\right\rangle \\
&=0.
\end{align}
Finally fix $M\in \mathcal{L}_{divfree}^{2},Q\in \mathcal{L}_{Hess}^{2},$
and let $Q=\Hess(-\Delta )^{-1}f.$ Then we compute that 
\begin{align}
\left\langle Q,M\right\rangle & =\left\langle 
\Hess(-\Delta)^{-1}f,M\right\rangle \\
&=\left\langle \nabla \nabla (-\Delta )^{-1}f,M\right\rangle \\
&=\left\langle \nabla (-\Delta)^{-\frac{1}{2}}f
,-\divr(-\Delta)^{-\frac{1}{2}} M\right\rangle \\
&= \left\langle f
,\divr^2(-\Delta)^{-1} M\right\rangle \\
&=0.
\end{align}
\end{remark}

In order to proceed with the further decomposition in Theorem \ref{HelmholtzTwoIntro}, we also need to give Fourier space side characterization of $L^2_{\widetilde{Id}}$ and $L^2_{tr\&divfree}$.

\begin{proposition} \label{FourierConstrainDivFree}
For all $\xi \in \mathbb{R}^{3},\xi \neq 0,$ take $
\eta ,\mu \in \mathbb{R}^{3}$ to be unit vectors such that 
\begin{equation}
\mathcal{B}\left( \xi \right) =\left\{ \frac{\xi }{\left\vert \xi
\right\vert },\eta ,\mu \right\}
\end{equation}
is an orthonormal basis for $\mathbb{R}^{3}$.
$M\in \mathcal{L}_{\widetilde{Id}}^{2},$ if and only if $\widehat{M}\in \mathcal{L}
^{2}$ and almost everywhere $\xi \in \mathbb{R}^{3}$ 
\begin{equation}
\left[ \widehat{M}\left( \xi \right)
\right]_{\mathcal{B}\left( \xi \right)}
\in \spn \left\{ \frac{1}{\sqrt{2}}\left[ 
\begin{array}{ccc}
0 & 0 & 0 \\ 
0 & 1 & 0 \\ 
0 & 0 & 1
\end{array}
\right] \right\}.
\end{equation}
$M\in \mathcal{L}_{tr\&divfree}^{2},$ if and only if $\widehat{M}\in \mathcal{L}
^{2}$ and almost everywhere $\xi \in \mathbb{R}^{3}$ 
\begin{equation}
\left[ \widehat{M}\left( \xi \right)
\right]_{\mathcal{B}\left( \xi \right)}
\in \spn \left\{ \frac{1}{\sqrt{2}}\left[ 
\begin{array}{ccc}
0 & 0 & 0 \\ 
0 & 1 & 0 \\ 
0 & 0 & -1
\end{array}
\right] ,\frac{1}{\sqrt{2}}\left[ 
\begin{array}{ccc}
0 & 0 & 0 \\ 
0 & 0 & 1 \\ 
0 & 1 & 0
\end{array}
\right] \right\}.
\end{equation}
\end{proposition}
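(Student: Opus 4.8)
The plan is to work entirely on the Fourier side and to piggyback on the elementary matrix computations already performed in the proof of Proposition \ref{FourierConstraint}. Two facts from that proof are all we need: first, for $f\in L^2$ one has $\widehat{\Hess(-\Delta)^{-1}f}(\xi)=-\tfrac{\xi}{|\xi|}\otimes\tfrac{\xi}{|\xi|}\,\hat f(\xi)$, which in the basis $\mathcal{B}(\xi)$ is the matrix $-\hat f(\xi)\,\mathrm{diag}(1,0,0)$; second, $\widehat{\divr(M)}(\xi)=2\pi i\,\widehat M(\xi)\xi$, so $\divr(M)=0$ is equivalent to the first row and column of $[\widehat M(\xi)]_{\mathcal{B}(\xi)}$ vanishing almost everywhere. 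I will also use that both the trace and the matrix (Frobenius) inner product are invariant under the orthogonal change of basis $\mathcal{B}(\xi)$, and that the Fourier transform acts componentwise so that $\widehat{\tr(M)}=\tr(\widehat M)$.

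For $\mathcal{L}^2_{\widetilde{Id}}$: if $M=fI_3+\Hess(-\Delta)^{-1}f$ with $f\in L^2$, then because $I_3$ is represented by the identity in every orthonormal basis, $[\widehat M(\xi)]_{\mathcal{B}(\xi)}=\hat f(\xi)\bigl(\mathrm{diag}(1,1,1)-\mathrm{diag}(1,0,0)\bigr)=\hat f(\xi)\,\mathrm{diag}(0,1,1)$, which lies in the claimed one-dimensional span almost everywhere. Conversely, if $M\in\mathcal{L}^2$ satisfies $[\widehat M(\xi)]_{\mathcal{B}(\xi)}=a(\xi)\,\mathrm{diag}(0,1,1)$ for some measurable $a$, then $|\widehat M(\xi)|^2=2|a(\xi)|^2$, so $a\in L^2$; setting $f:=\mathcal{F}^{-1}a\in L^2$ and running the computation backwards gives $\widehat M(\xi)=\hat f(\xi)\bigl(I_3-\tfrac{\xi}{|\xi|}\otimes\tfrac{\xi}{|\xi|}\bigr)$, i.e.\ $M=fI_3+\Hess(-\Delta)^{-1}f$, so $M\in\mathcal{L}^2_{\widetilde{Id}}$.

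For $\mathcal{L}^2_{tr\&divfree}$: the divergence-free part of Proposition \ref{FourierConstraint} already says that $M\in\mathcal{L}^2_{divfree}$ iff $[\widehat M(\xi)]_{\mathcal{B}(\xi)}$ lies in the span $V_3$ of the three matrices listed there, all of which have vanishing first row and column. Adding the trace-free constraint, which is equivalent (by basis-invariance of the trace together with Plancherel) to $\tr\bigl([\widehat M(\xi)]_{\mathcal{B}(\xi)}\bigr)=0$ a.e., and observing that among the three generators of $V_3$ only $\mathrm{diag}(0,1,1)$ has nonzero trace, forces $[\widehat M(\xi)]_{\mathcal{B}(\xi)}$ into the span of the remaining two generators, which is exactly the claimed two-dimensional space. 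The converse is immediate: any matrix-valued symbol taking values in that span has vanishing first row and column and vanishing trace, so the corresponding $M$ — which lies in $\mathcal{L}^2$ since its norm is, up to a constant, that of its two coefficient functions — is simultaneously divergence free and trace free.

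There is no genuine obstacle here; the proposition is pure bookkeeping sitting on top of Proposition \ref{FourierConstraint}. The only two points that deserve a line of care are (i) the $L^2$-to-$L^2$ nature of the correspondence $f\leftrightarrow M$ for $\mathcal{L}^2_{\widetilde{Id}}$, which is the identity $\|M\|_{L^2}^2=2\|f\|_{L^2}^2$ coming from $|\mathrm{diag}(0,1,1)|=\sqrt{2}$, and (ii) the translation of the intrinsic conditions $\tr(M)=0$ and $\divr(M)=0$ into pointwise linear conditions on $[\widehat M(\xi)]_{\mathcal{B}(\xi)}$, which is precisely where the invariance of the trace and of the matrix inner product under the $\xi$-dependent orthonormal change of basis is used.
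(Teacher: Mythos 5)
Your proposal is correct and follows essentially the same route as the paper: both directions of each equivalence are handled on the Fourier side using the basis $\mathcal{B}(\xi)$, with the $\widetilde{Id}$ case reduced to the identity $\widehat{M}=\hat f\,(I_3-\tfrac{\xi\otimes\xi}{|\xi|^2})$ and the tr\&divfree case obtained by imposing the trace condition on the divergence-free span from Proposition \ref{FourierConstraint}. Your observation that only one of the three generators of $V_3$ has nonzero trace is a slightly cleaner way of phrasing the coefficient elimination the paper carries out explicitly, but it is the same argument.
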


\begin{proof}
Suppose $M\in \mathcal{L}^2_{\widetilde{Id}}.$ Then there exists $f\in L^2$, such that
\begin{equation}
    M=\Hess(-\Delta)^{-1}f+f I_3.
\end{equation}
Taking the Fourier transform of $M$, we find that
\begin{equation}
    \widehat{M}=\hat{f}\left(I_3
    -\frac{\xi \otimes \xi}{|\xi|^2}\right),
\end{equation}
and therefore we can see that 
\begin{equation}
\left[ \widehat{M}\left( \xi \right)
\right]_{\mathcal{B}\left( \xi \right)}
= \hat{f}(\xi)\left[ 
\begin{array}{ccc}
0 & 0 & 0 \\ 
0 & 1 & 0 \\ 
0 & 0 & 1
\end{array}
\right],
\end{equation}
and
\begin{equation}
\left[ \widehat{M}\left( \xi \right)
\right]_{\mathcal{B}\left( \xi \right)}
\in \spn \left\{ \frac{1}{\sqrt{2}}\left[ 
\begin{array}{ccc}
0 & 0 & 0 \\ 
0 & 1 & 0 \\ 
0 & 0 & 1
\end{array}
\right] \right\}.
\end{equation}
We can also clearly see that $f\in L^2,$ implies that 
$\widehat{M}\in \mathcal{L}^2$, so this completes the proof for the forward direction.

Now suppose that $\widehat{M}\in \mathcal{L}^2$ and for almost all $\xi\in\mathbb{R}^3$,
\begin{equation}
\left[ \widehat{M}\left( \xi \right)
\right]_{\mathcal{B}\left( \xi \right)}
\in \spn \left\{ \frac{1}{\sqrt{2}}\left[ 
\begin{array}{ccc}
0 & 0 & 0 \\ 
0 & 1 & 0 \\ 
0 & 0 & 1
\end{array}
\right] \right\}.
\end{equation}
Then there exists $\hat{f}\in L^2$ such that almost everywhere $\xi\in \mathbb{R}^3,$
\begin{equation}
    \widehat{M}(\xi)=\hat{f}(\xi) \left(I_3
    -\frac{\xi\otimes\xi}{|\xi|^2}\right).
\end{equation}
Taking the inverse Fourier transform, we can conclude that
\begin{equation}
    M=fI_3+\Hess(-\Delta)^{-1}f.
\end{equation}
This completes the proof of the Fourier space characterization of 
$\mathcal{L}^2_{\widetilde{Id}}$.

We will now consider the corresponding Fourier space characterization of $\mathcal{L}^2_{tr\&divfree}$.
Suppose $M\in \mathcal{L}^2_{tr\&divfree}$.
We know from the proof of Proposition \ref{FourierConstraint}
that if $\divr(-\Delta)^{-\frac{1}{2}}M=0$, then
\begin{equation}
    \widehat{M}(\xi)\xi=0
\end{equation}
for almost all $\xi \in \mathbb{R}^3$.
Therefore, using our orthonormal basis $\mathcal{B}(\xi)$, there exists $f,g,h\in \mathcal{L}^2$, such that
\begin{equation}
    \widehat{M}(\xi)= f(\xi) \eta \otimes \eta 
    + h(\xi) \mu\otimes\mu
    +g(\xi) (\eta\otimes\mu+\mu\otimes\eta).
\end{equation}
However we know that $\tr(M)=0,$ and therefore
\begin{align}
    \tr\left(\widehat{M}\right)
    &=
    f-h \\
    &=0.
\end{align}
This means that
\begin{equation}
    \hat{M}(\xi)=f(\xi)(\eta\otimes\eta-\mu\otimes\mu)
    +g(\xi)(\eta\otimes\mu+\mu\otimes\eta),
\end{equation}
and so we may conclude that
\begin{equation}
    \left[\widehat{M}(\xi)\right]_{\mathcal{B}(\xi)}
    =
    \left[\begin{array}{ccc}
         0 & 0 &0  \\
         0 & f &g \\
         0& g & -f
    \end{array}\right],
\end{equation}
and
\begin{equation}
\left[ \widehat{M}\left( \xi \right)
\right]_{\mathcal{B}\left( \xi \right)}
\in \spn \left\{ \frac{1}{\sqrt{2}}\left[ 
\begin{array}{ccc}
0 & 0 & 0 \\ 
0 & 1 & 0 \\ 
0 & 0 & -1
\end{array}
\right] ,\frac{1}{\sqrt{2}}\left[ 
\begin{array}{ccc}
0 & 0 & 0 \\ 
0 & 0 & 1 \\ 
0 & 1 & 0
\end{array}
\right] \right\}.
\end{equation}
This completes the proof of the forward direction.

Now suppose $\widehat{M}\in \mathcal{L}^2$ and 
\begin{equation}
\left[ \widehat{M}\left( \xi \right)
\right]_{\mathcal{B}\left( \xi \right)}
\in \spn \left\{ \frac{1}{\sqrt{2}}\left[ 
\begin{array}{ccc}
0 & 0 & 0 \\ 
0 & 1 & 0 \\ 
0 & 0 & -1
\end{array}
\right] ,\frac{1}{\sqrt{2}}\left[ 
\begin{array}{ccc}
0 & 0 & 0 \\ 
0 & 0 & 1 \\ 
0 & 1 & 0
\end{array}
\right] \right\}.
\end{equation}
This implies that there exists $f,g\in L^2$ such that
\begin{equation}
    \hat{M}(\xi)=f(\xi)(\eta\otimes\eta-\mu\otimes\mu)
    +g(\xi)(\eta\otimes\mu+\mu\otimes\eta).
\end{equation}
Note that we can immediately see that for almost all $\xi \in \mathbb{R}^3,$
\begin{equation}
    \tr\left(\widehat{M}(\xi)\right)=0,
\end{equation}
and
\begin{equation} 
    \widehat{M}(\xi) \xi =0.
\end{equation}
Going from the Fourier space characterization back to the physical space, we can see that
\begin{equation}
    \tr(M)=0,
\end{equation}
and
\begin{equation}
    \divr(-\Delta)^{-\frac{1}{2}}M=0.
\end{equation}
This implies that $M\in\mathcal{L}^2_{tr\&divfree}$,
and this completes the proof.
\end{proof}

\begin{theorem} \label{HelmholtzTwo}
We have the further orthogonal decomposition
\begin{equation}
    \mathcal{L}^2_{divfree}
    =
    \mathcal{L}^2_{\widetilde{Id}} \oplus 
    \mathcal{L}^2_{tr\&divfree},
\end{equation}
and therefore the entire space $\mathcal{L}^2$ has the orthogonal decomposition
\begin{equation}
    \mathcal{L}^2=
    \mathcal{L}^2_{st} \oplus \mathcal{L}^2_{Hess} 
    \oplus L^2_{\widetilde{Id}} \oplus \mathcal{L}^2_{tr\&divfree}
\end{equation}
\end{theorem}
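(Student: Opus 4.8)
The plan is to prove Theorem \ref{HelmholtzTwo} entirely on the Fourier space side, exactly as was done for Theorem \ref{HelmholtzOne}, using the fiberwise orthonormal basis $\mathcal{B}(\xi)$ from Proposition \ref{FourierConstraint} and the characterizations of $\mathcal{L}^2_{\widetilde{Id}}$ and $\mathcal{L}^2_{tr\&divfree}$ just established in Proposition \ref{FourierConstrainDivFree}. The key observation is that in the six-dimensional space $\mathcal{S}^{3\times 3}$ with orthonormal basis $\mathcal{B}$, the subspace $V_3$ appearing in the proof of Theorem \ref{HelmholtzOne} (which characterizes $\mathcal{L}^2_{divfree}$ fiberwise) is exactly the three-dimensional span of the last three basis matrices, and this splits orthogonally as the line spanned by $\tfrac{1}{\sqrt2}\mathrm{diag}(0,1,1)$ — which characterizes $\mathcal{L}^2_{\widetilde{Id}}$ — together with the two-dimensional span of $\tfrac{1}{\sqrt2}\mathrm{diag}(0,1,-1)$ and $\tfrac{1}{\sqrt2}(e_2\otimes e_3 + e_3\otimes e_2)$ — which characterizes $\mathcal{L}^2_{tr\&divfree}$.

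First I would fix $M\in\mathcal{L}^2_{divfree}$ and invoke Proposition \ref{FourierConstraint} to write $[\widehat M(\xi)]_{\mathcal{B}(\xi)} = \alpha(\xi)\tfrac{1}{\sqrt2}\mathrm{diag}(0,1,1) + \beta(\xi)\tfrac{1}{\sqrt2}\mathrm{diag}(0,1,-1) + \gamma(\xi)\tfrac{1}{\sqrt2}(e_2\otimes e_3+e_3\otimes e_2)$ for $\alpha,\beta,\gamma\in L^2$; then I would set $M^{\widetilde{Id}}$ to be the inverse Fourier transform of the first term and $M^{tr\&divfree}$ the inverse transform of the remaining two terms, so that $M = M^{\widetilde{Id}} + M^{tr\&divfree}$. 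By Proposition \ref{FourierConstrainDivFree}, $M^{\widetilde{Id}}\in\mathcal{L}^2_{\widetilde{Id}}$ and $M^{tr\&divfree}\in\mathcal{L}^2_{tr\&divfree}$, giving $\mathcal{L}^2_{divfree} = \mathcal{L}^2_{\widetilde{Id}} + \mathcal{L}^2_{tr\&divfree}$. Conversely, both of these spaces are contained in $\mathcal{L}^2_{divfree}$: the first because $\widehat M\xi = 0$ when $[\widehat M]_{\mathcal{B}}$ is a multiple of $\mathrm{diag}(0,1,1)$, and the second because the trace-and-divergence-free condition includes divergence-freeness by definition. Orthogonality then follows from the same Plancherel argument as in Theorem \ref{HelmholtzOne}: since the matrix inner product is basis-independent, $\langle M^{\widetilde{Id}}, M^{tr\&divfree}\rangle = \int_{\mathbb{R}^3} [\widehat{M^{\widetilde{Id}}}(\xi)]_{\mathcal{B}(\xi)} \odot \overline{[\widehat{M^{tr\&divfree}}(\xi)]_{\mathcal{B}(\xi)}}\,\diff\xi = 0$ because $\mathrm{diag}(0,1,1)$ is orthogonal to both $\mathrm{diag}(0,1,-1)$ and $e_2\otimes e_3+e_3\otimes e_2$ in $\mathcal{S}^{3\times 3}$.

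For the second assertion, I would simply substitute this refined decomposition of $\mathcal{L}^2_{divfree}$ into the decomposition $\mathcal{L}^2 = \mathcal{L}^2_{st}\oplus\mathcal{L}^2_{Hess}\oplus\mathcal{L}^2_{divfree}$ from Theorem \ref{HelmholtzOne}; associativity of orthogonal direct sums then yields $\mathcal{L}^2 = \mathcal{L}^2_{st}\oplus\mathcal{L}^2_{Hess}\oplus\mathcal{L}^2_{\widetilde{Id}}\oplus\mathcal{L}^2_{tr\&divfree}$, and no further orthogonality needs checking since $\mathcal{L}^2_{\widetilde{Id}}\oplus\mathcal{L}^2_{tr\&divfree}\subseteq\mathcal{L}^2_{divfree}$ is already orthogonal to both $\mathcal{L}^2_{st}$ and $\mathcal{L}^2_{Hess}$.

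Honestly, there is no serious obstacle here: the entire content has been front-loaded into Propositions \ref{FourierConstraint} and \ref{FourierConstrainDivFree}, which reduce everything to the trivial fact that a $3$-dimensional space splits orthogonally into a $1$-dimensional and a $2$-dimensional piece. The only point requiring any care is the measurability of the coefficient functions $\alpha,\beta,\gamma$ and the verification that they lie in $L^2$ — this follows because $M$ itself is in $\mathcal{L}^2$ so its fiberwise norm $|[\widehat M(\xi)]_{\mathcal{B}(\xi)}|^2 = \tfrac12(\alpha^2+\beta^2+\gamma^2)$ is integrable, and the orthonormality of $\mathcal{B}(\xi)$ means the coefficients are obtained by taking (measurable) inner products against fixed basis matrices. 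I would state this briefly and then present the computation as above.
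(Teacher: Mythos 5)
Your proof is correct and follows essentially the same route as the paper: decompose fiberwise in the basis $\mathcal{B}(\xi)$ using Propositions \ref{FourierConstraint} and \ref{FourierConstrainDivFree}, invert the Fourier transform on each piece, and get orthogonality from Plancherel plus the pointwise orthogonality of the subspaces of $\mathcal{S}^{3\times 3}$. (One tiny slip in your aside: since the listed basis matrices are already orthonormal, the fiberwise norm is $|\alpha|^2+|\beta|^2+|\gamma|^2$ with no extra factor of $\tfrac12$; this does not affect the argument.)
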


\begin{proof}
We have already shown that 
\begin{equation}
    \mathcal{L}^2=
    \mathcal{L}^2_{st} \oplus \mathcal{L}^2_{Hess} 
    \oplus \mathcal{L}^2_{divfree},
\end{equation}
so it suffices to show that
\begin{equation}
    \mathcal{L}^2_{divfree}
    =
    \mathcal{L}^2_{\widetilde{Id}} \oplus 
    \mathcal{L}^2_{tr\&divfree}.
\end{equation}
We will start by proving
\begin{equation}
    \mathcal{L}^2_{divfree}
    =
    \mathcal{L}^2_{\widetilde{Id}} 
    +\mathcal{L}^2_{tr\&divfree}.
\end{equation}
First we take $V_3$ as in Proposition \ref{FourierConstraint}, and define $W_1$ and $W_2$ as follows:
\begin{align}
    V_3&=
    \spn\left\{ \frac{1}{\sqrt{2}}\left( 
\begin{array}{ccc}
0 & 0 & 0 \\ 
0 & 1 & 0 \\ 
0 & 0 & 1
\end{array}
\right) ,\frac{1}{\sqrt{2}}\left( 
\begin{array}{ccc}
0 & 0 & 0 \\ 
0 & 1 & 0 \\ 
0 & 0 & -1
\end{array}
\right) ,\frac{1}{\sqrt{2}}\left( 
\begin{array}{ccc}
0 & 0 & 0 \\ 
0 & 0 & 1 \\ 
0 & 1 & 0
\end{array}
\right) \right\} \\
    W_1 &=
    \spn\left\{ \frac{1}{\sqrt{2}}\left( 
\begin{array}{ccc}
0 & 0 & 0 \\ 
0 & 1 & 0 \\ 
0 & 0 & 1
\end{array}
\right) \right\} \\
    W_2 &=
    \spn\left\{ 
    \frac{1}{\sqrt{2}}\left( 
\begin{array}{ccc}
0 & 0 & 0 \\ 
0 & 1 & 0 \\ 
0 & 0 & -1
\end{array}
\right),
\frac{1}{\sqrt{2}}\left( 
\begin{array}{ccc}
0 & 0 & 0 \\ 
0 & 0 & 1 \\ 
0 & 1 & 0
\end{array}
\right) \right\}.
\end{align}
It is immediately clear that
\begin{equation}
    V_3= W_1 \oplus W_2.
\end{equation}
From Proposition \ref{FourierConstraint}, we know that 
for all $M\in \mathcal{L}^2_{divfree},$ 
\begin{equation}
    \left[\widehat{M}(\xi)\right]_{\mathcal{B}(\xi)}
    \in V_3.
\end{equation}
Therefore, for each $\xi\in\mathbb{R}^3$ there exists a unique $\Tilde{M}^1(\xi)\in W_1, \Tilde{M}^2(\xi) \in W_2,$ such that
\begin{equation}
    \left[\widehat{M}(\xi)\right]_{\mathcal{B}(\xi)}
    =\Tilde{M}^1(\xi)+\Tilde{M^2}(\xi).
\end{equation}
Noting that clearly 
$\Tilde{M}^1,\Tilde{M}^2 \in\mathcal{L}^2$
and inverting the Fourier transform, take $M^1,M^2 \in\mathcal{L}^2$ such that
\begin{align}
    \left[\widehat{M^1}(\xi)\right]_{\mathcal{B}(\xi)}
    &= \Tilde{M}^1(\xi) \\
    \left[\widehat{M^2}(\xi)\right]_{\mathcal{B}(\xi)}
    &= \Tilde{M}^2(\xi).
\end{align}
Clearly we will then have
\begin{equation}
    M=M^1+M^2,
\end{equation}
and by Proposition \ref{FourierConstrainDivFree}, we have
$M^1\in \mathcal{L}^2_{\Tilde{Id}}$ and
$M^2 \in \mathcal{L}^2_{tr\&divfree}$.
We have now proven that
\begin{equation}
    \mathcal{L}^2_{divfree}
    =
    \mathcal{L}^2_{\widetilde{Id}} 
    +\mathcal{L}^2_{tr\&divfree}.
\end{equation}

It remains to prove orthogonality. Using the fact that the Fourier transform
is an isometry, and that the matrix inner product is independent of the
choice of orthonormal basis, we compute that for $M,S\in \mathcal{L}^{2}$ 
\begin{align}
\left\langle S,M\right\rangle & =\left\langle \widehat{S},\widehat{M}
\right\rangle \\
& =\int_{\mathbb{R}^{3}}
\left[ \widehat{M}(\xi )
\right]_{\mathcal{B}(\xi }
\odot \left[ \bar{\widehat{S}}(\xi )
\right]_{\mathcal{B}(\xi )} \diff\xi.
\end{align}
This implies that the orthogonality of $W_{1}$and $W_{2}$ together with
Proposition \ref{FourierConstrainDivFree}, which states the image of the Fourier
transform with respect to the basis $\mathcal{B}(\xi )$ 
must be in $W_1$ and $W_2$
for $\mathcal{L}_{\Tilde{Id}}^{2}$ and
$\mathcal{L}_{tr\&divfree}^{2}$ respectively, 
implies that $\mathcal{L}_{\Tilde{Id}}^{2}$ and
$\mathcal{L}_{tr\&divfree}^{2}$ are orthogonal. 
This completes the proof.
\end{proof}

\begin{remark}
As in the proof of Theorem \ref{HelmholtzOne}, the differential structure of the spaces in question is obscured in the proof of Theorem \ref{HelmholtzTwo}, because the differential structure is all buried in the choice of basis on the Fourier space side. The key fact is the orthogonality of $\mathcal{L}_{\widetilde{Id}}^{2}$ and $\mathcal{L}_{tr\&divfree}^{2}$.
We can see formally integrating by parts that
for all $f\in L^2, Q\in \mathcal{L}^2_{\tr\&divfree}$,
\begin{align}
    \left<fI_3+\Hess(-\Delta)^{-1}f,Q\right>
    &=
    \left<f,\tr(Q)\right>
    -\left<\nabla(-\Delta)^{-\frac{1}{2}}f,
    \divr(-\Delta)^{-\frac{1}{2}}Q\right> \\
    &=0.
\end{align}
The arguments here are precisely the same as on the Fourier side, but the choice of basis obscures what is really going on, whereas in the above computation the role of the divergence free and trace free constraints are clear.
\end{remark}

We will now justify the introduction of the space $\mathcal{L}^2_{\widetilde{Id}}$, as a space of adjusted identities, by showing that if is the space $\mathcal{L}^2_{Id}$ projected onto the orthogonal complement of $\mathcal{L}^2_{Hess}$.

\begin{proposition}
The space $\mathcal{L}^2_{Id}$ is the space of identity matrices projected to be orthogonal to the Hessians, with
\begin{equation}
    \mathcal{L}^2_{\widetilde{Id}}=
    P_{Hess}^\perp \mathcal{L}^2_{Id}.
\end{equation}
In particular, for all $f\in L^2,$ we have
\begin{equation} \label{AdjustedID}
    P_{Hess}^\perp \left(f I_3\right)=
    f I_3+
    \Hess(-\Delta)^{-1} f.
\end{equation}
\end{proposition}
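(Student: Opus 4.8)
The plan is to compute the orthogonal projection directly. Theorem \ref{HelmholtzOne} in particular exhibits $\mathcal{L}^2_{Hess}$ as a closed subspace of $\mathcal{L}^2$, so $P_{Hess}$ and $P_{Hess}^\perp = I - P_{Hess}$ are well defined, and by uniqueness of the orthogonal decomposition it suffices, given $f \in L^2$, to produce a splitting $fI_3 = R + N$ with $R \in \mathcal{L}^2_{Hess}$ and $N \perp \mathcal{L}^2_{Hess}$; then necessarily $P_{Hess}^\perp(fI_3) = N$. I would take the candidate $R = -\Hess(-\Delta)^{-1}f$ and $N = fI_3 + \Hess(-\Delta)^{-1}f$, so that $R + N = fI_3$ is immediate and $R \in \mathcal{L}^2_{Hess}$ holds by Definition \ref{BigDef}.

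It then remains to verify $N \perp \mathcal{L}^2_{Hess}$, that is, $\langle fI_3 + \Hess(-\Delta)^{-1}f,\ \Hess(-\Delta)^{-1}g\rangle = 0$ for all $g \in L^2$, which I would split into two pieces. For the first, integrating by parts to move one derivative of the Hessian and then contracting the trace gives $\langle fI_3, \Hess(-\Delta)^{-1}g\rangle = \langle f, \tr\Hess(-\Delta)^{-1}g\rangle = \langle f, \Delta(-\Delta)^{-1}g\rangle = -\langle f, g\rangle$. For the second, the cleanest computation is on the Fourier side: since $\widehat{\Hess(-\Delta)^{-1}f}(\xi) = -\frac{\xi\otimes\xi}{|\xi|^2}\hat f(\xi)$, as in the proof of Proposition \ref{FourierConstraint}, Plancherel gives $\langle \Hess(-\Delta)^{-1}f, \Hess(-\Delta)^{-1}g\rangle = \int_{\mathbb{R}^3} \frac{(\xi\otimes\xi)\odot(\xi\otimes\xi)}{|\xi|^4}\,\hat f(\xi)\,\overline{\hat g(\xi)}\,\diff\xi = \int_{\mathbb{R}^3} \hat f(\xi)\,\overline{\hat g(\xi)}\,\diff\xi = \langle f, g\rangle$, using $(\xi\otimes\xi)\odot(\xi\otimes\xi) = \sum_{i,j}\xi_i^2\xi_j^2 = |\xi|^4$. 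Adding the two contributions yields $0$, so $P_{Hess}^\perp(fI_3) = fI_3 + \Hess(-\Delta)^{-1}f$, which is \eqref{AdjustedID}.

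Finally, letting $f$ range over all of $L^2$ and recalling from Definition \ref{BigDef} that $\mathcal{L}^2_{Id} = \{fI_3 : f \in L^2\}$ and $\mathcal{L}^2_{\widetilde{Id}} = \{fI_3 + \Hess(-\Delta)^{-1}f : f \in L^2\}$, the identity \eqref{AdjustedID} says precisely that $P_{Hess}^\perp$ carries $\mathcal{L}^2_{Id}$ onto $\mathcal{L}^2_{\widetilde{Id}}$, i.e.\ $\mathcal{L}^2_{\widetilde{Id}} = P_{Hess}^\perp \mathcal{L}^2_{Id}$. Nothing in this argument is deep; the only step needing genuine care is the identity $\langle \Hess(-\Delta)^{-1}f, \Hess(-\Delta)^{-1}g\rangle = \langle f, g\rangle$, which encodes the fact that the full second-Riesz-transform tensor $\frac{\xi\otimes\xi}{|\xi|^2}$ acts as the identity on scalar multiples of $I_3$. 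An alternative to the Fourier computation, which meshes more directly with the earlier development, is to work in the orthonormal basis $\mathcal{B}(\xi)$ of Proposition \ref{FourierConstraint}: there $\widehat{fI_3}$ is represented by $\hat f\,\mathrm{diag}(1,1,1)$ while $\mathcal{L}^2_{Hess}$ corresponds to the span of $\mathrm{diag}(1,0,0)$, so the projection onto $\mathcal{L}^2_{Hess}$ picks out $\hat f\,\mathrm{diag}(1,0,0)$ and the complement is $\hat f\,\mathrm{diag}(0,1,1)$, which by Proposition \ref{FourierConstrainDivFree} is exactly the Fourier representation of an element of $\mathcal{L}^2_{\widetilde{Id}}$.
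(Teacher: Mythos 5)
Your proposal is correct and is essentially the paper's argument: the paper likewise computes $P_{Hess}(fI_3)=-\Hess(-\Delta)^{-1}f$ from the Fourier representation $\widehat{\Hess(-\Delta)^{-1}f}=-\frac{\xi\otimes\xi}{|\xi|^2}\hat f$ and reads off \eqref{AdjustedID}, which is exactly the ``alternative'' you sketch at the end (your main route just replaces reading the projection off the basis $\mathcal{B}(\xi)$ with the equivalent direct verification $\langle fI_3+\Hess(-\Delta)^{-1}f,\Hess(-\Delta)^{-1}g\rangle=-\langle f,g\rangle+\langle f,g\rangle=0$). One cosmetic quibble: the identity $\langle fI_3,M\rangle=\langle f,\tr M\rangle$ is pointwise linear algebra, not an integration by parts.
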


\begin{proof}
It suffices to prove the identity \eqref{AdjustedID}.
We begin by observing that,
\begin{equation}
    \widehat{\left(fI_3\right)}=\hat{f}I_3.
\end{equation}
From the Fourier space decomposition in Theorem \ref{HelmholtzOne}, we can see that
\begin{equation}
    \widehat{P_{Hess}(fI_3)}=
    \hat{f} \frac{\xi}{|\xi|}\otimes \frac{\xi}{|\xi|},
\end{equation}
and therefore we can conclude that
\begin{equation}
    P_{Hess}(fI_3)=-\Hess(-\Delta)^{-1}f.
\end{equation}
It then follows that
\begin{equation}
    P_{Hess}^\perp(fI_3)= f I_3+ \Hess(-\Delta)^{-1} f,
\end{equation}
and this completes the proof.
\end{proof}

We can now give a complete representation of the orthogonal complement of the space of strain matrices, which was given in Corollary \ref{OrthogonalComplementIntro}, and is restated for the reader's convenience.

\begin{corollary} \label{OrthogonalComplement}
The orthogonal complement of the strain constraint space is given by
\begin{align}
    \left(\mathcal{L}^2_{st}\right)^\perp 
    &=
    \mathcal{L}^2_{Hess} \oplus
    \mathcal{L}^2_{\widetilde{Id}} \oplus
    \mathcal{L}^2_{tr\&divfree} \\
    &=
    \mathcal{L}^2_{Hess} 
\oplus \mathcal{L}^2_{divfree}.
\end{align}
\end{corollary}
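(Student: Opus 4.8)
The plan is to read the corollary off directly from the orthogonal decompositions already established in Theorems \ref{HelmholtzOne} and \ref{HelmholtzTwo}, using only the elementary Hilbert space fact that if $\mathcal{H}=X_1\oplus X_2\oplus\cdots\oplus X_n$ is an orthogonal direct sum of mutually orthogonal closed subspaces, then $X_1^\perp=X_2\oplus\cdots\oplus X_n$. Concretely, since $\mathcal{L}^2_{Hess}\oplus\mathcal{L}^2_{divfree}$ is orthogonal to $\mathcal{L}^2_{st}$ (each pair of summands being orthogonal by Theorem \ref{HelmholtzOne}) and its algebraic sum with $\mathcal{L}^2_{st}$ exhausts all of $\mathcal{L}^2$ (again by Theorem \ref{HelmholtzOne}), we get $\mathcal{L}^2_{Hess}\oplus\mathcal{L}^2_{divfree}=\left(\mathcal{L}^2_{st}\right)^\perp$, which is the second displayed line of the corollary. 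For the first line, I would simply substitute the finer decomposition $\mathcal{L}^2_{divfree}=\mathcal{L}^2_{\widetilde{Id}}\oplus\mathcal{L}^2_{tr\&divfree}$ from Theorem \ref{HelmholtzTwo}.

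The one point deserving a word of justification is the passage from ``orthogonal algebraic complement inside $\mathcal{L}^2$'' to ``orthogonal complement'': for this one needs the summands to be closed subspaces, so that the orthogonal complement is well defined and the algebraic direct sum is genuinely the Hilbert space orthogonal direct sum. This is immediate from the Fourier space characterizations of Propositions \ref{FourierConstraint} and \ref{FourierConstrainDivFree}: since the Fourier transform is unitary on $\mathcal{L}^2$, and each of the spaces $\mathcal{L}^2_{st},\mathcal{L}^2_{Hess},\mathcal{L}^2_{divfree},\mathcal{L}^2_{\widetilde{Id}},\mathcal{L}^2_{tr\&divfree}$ is the set of $M$ for which $\left[\widehat{M}(\xi)\right]_{\mathcal{B}(\xi)}$ lies, for almost every $\xi$, in a fixed linear subspace $V_i(\xi)$ (respectively $W_i(\xi)$) of $\mathcal{S}^{3\times 3}_{\mathbb{C}}$, each space is the image under the inverse Fourier transform of a closed subspace of $L^2(\mathbb{R}^3;\mathcal{S}^{3\times 3}_{\mathbb{C}})$; indeed the fiberwise orthogonal projection onto $V_i(\xi)$ is a bounded (Fourier multiplier type) operator on $\mathcal{L}^2$, so its range and kernel are closed.

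I expect essentially no obstacle here: the entire content of the corollary is already contained in Theorems \ref{HelmholtzOne} and \ref{HelmholtzTwo}, whose proofs in turn reduce, via Lemma \ref{LemmaSym6}, to the fixed six-dimensional orthogonal decomposition $\mathcal{S}^{3\times 3}_{\mathbb{C}}=V_1\oplus V_2\oplus V_3$ applied fiberwise in $\xi$. The corollary is therefore a formal consequence, and the write-up should be only a few lines: cite the two theorems, invoke the Hilbert space complement identity, and substitute the $\mathcal{L}^2_{divfree}$ decomposition.
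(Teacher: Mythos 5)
Your proposal is correct and takes essentially the same route as the paper, which also deduces the corollary as an immediate formal consequence of Theorems \ref{HelmholtzOne} and \ref{HelmholtzTwo}. The additional remark about closedness of the summands (via the Fourier multiplier characterization) is a valid, if unstated in the paper, point of rigor, but it does not change the argument.
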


\begin{proof}
This follows immediately from the orthogonal decompositions in Theorems \ref{HelmholtzOne} and \ref{HelmholtzTwo}.
\end{proof}

The decomposition in Theorem \ref{HelmholtzTwo} also allows us to give a distributional characterization of the strain space, analogous to the characterization that divergence free vector fields are those vector fields which are orthogonal to all gradients.
We will now prove Corollary \ref{Distribution}, which is restated for the reader's convenience.

\begin{corollary} \label{DistrubtionBody}
Suppose $S\in \mathcal{L}^2$. Then $S\in \mathcal{L}^2_{st}$
if and only if, for all $f,g\in C_c^\infty(\mathbb{R}^3)$
\begin{align}
    \left<S,\Hess(f)\right> &=0 \\
    \left<S,g I_3\right> &=0,
\end{align}
and for all $M\in C_c^\infty
\left(\mathbb{R}^3;\mathcal{S}^{3\times 3}\right)$, with
$\tr(M)=0$ and $\divr(M)=0$,
\begin{equation}
    \left<S,M\right>=0.
\end{equation}
\end{corollary}

\begin{proof}
The proof of this result essentially comes down to Corollary \ref{OrthogonalComplement} and the fact that $C_c^\infty$ is dense in $L^2$.
First we will observe that $S \in \mathcal{L}^2_{st}$ if and only if for all 
$M \in \left(\mathcal{L}^2_{st}\right)^\perp$,
\begin{equation}
    \left<S,M\right>=0.
\end{equation}
Suppose $S\in \mathcal{L}^2_{st}$. 
Then clearly for all $f,g\in L^2,$
\begin{align}
    \left<S,\Hess(-\Delta)^{-1}f\right>&=0 \\
    \left<S,\Hess(-\Delta)^{-1}g+gI_3\right>&=0,
\end{align}
and for all $M\in \mathcal{L}^2_{tr\&divfree}$,
\begin{equation}
    \left<S,M\right>=0.
\end{equation}
In particular this implies that for all $f\in L^2,$
\begin{align}
    \left<S, f I_3\right>
    &=
    \left<S,\Hess(-\Delta)^{-1}f+fI_3\right>
    -\left<S,\Hess(-\Delta)^{-1}f\right> \\
    &=
    0.
\end{align}
Using the fact that $C_c^\infty \subset \dot{H}^2$,
we can see that
for all $f,g\in C_c^\infty(\mathbb{R}^3)$
\begin{align}
    \left<S,\Hess(f)\right> &=0 \\
    \left<S,g I_3\right> &=0,
\end{align}
and for all $M\in C_c^\infty
\left(\mathbb{R}^3;\mathcal{S}^{3\times 3}\right)$, with
$\tr(M)=0$ and $\divr(M)=0$,
\begin{equation}
    \left<S,M\right>=0.
\end{equation}

Now we will consider the reverse direction. 
Suppose for all $f,g\in C_c^\infty(\mathbb{R}^3)$
\begin{align}
    \left<S,\Hess(f)\right> &=0 \\
    \left<S,g I_3\right> &=0,
\end{align}
and for all $M\in C_c^\infty
\left(\mathbb{R}^3;\mathcal{S}^{3\times 3}\right)$, with
$\tr(M)=0$ and $\divr(M)=0$,
\begin{equation}
    \left<S,M\right>=0.
\end{equation}
Using the fact that $C_c^\infty$ is dense in $\dot{H}^2$, we can conclude that for all $g\in \dot{H}^2$,
\begin{equation}
    \left<S,\Hess(g)\right> =0,
\end{equation}
and consequently for all $f\in L^2$,
\begin{equation} \label{PerpHess}
    \left<S,\Hess(-\Delta)^{-1}f\right> =0.
\end{equation}
Likewise, using the fact that $C_c^\infty$ is dense in $L^2,$
we can conclude for all $g\in L^2$,
\begin{equation} \label{PerpID}
    \left<S,g I_3\right> =0;
\end{equation}
and we can also conclude for all $M\in \mathcal{L}^2_{tr\&divfree}$,
\begin{equation}
    \left<S,M\right>=0.
\end{equation}
Putting together \eqref{PerpHess} and \eqref{PerpID}, we can conclude that 
for all $f\in L^2$,
\begin{equation}
    \left<S,\Hess(-\Delta)^{-1}f+f I_3\right>=0.
\end{equation}
Therefore, applying Corollary \ref{OrthogonalComplement}, which states
\begin{equation}
\left(\mathcal{L}^2_{st}\right)^\perp 
    =
    \mathcal{L}^2_{Hess} \oplus
    \mathcal{L}^2_{\widetilde{Id}} \oplus
    \mathcal{L}^2_{tr\&divfree},
\end{equation}
we find that for all 
$M\in \left(\mathcal{L}^2_{st}\right)^\perp$,
\begin{equation}
    \left<S,M\right>=0.
\end{equation}
This implies that $S\in\mathcal{L}^2_{st}$
and completes the proof.
\end{proof}

The orthogonal complement of Hessians is even more straightforward, as it is simply the kernel of divergence squared. From this it follows that the space of strain matrices can be expressed as the direct difference of the kernel of divergence squared and the kernel of divergence.

\begin{proposition} \label{HessOrthog}
The orthogonal complement of the space of Hessians is given by the set of matrices that are divergence squared free,
\begin{equation}
    \left(\mathcal{L}^2_{Hess}\right)^\perp
    = \ker\left(\divr^2\right)
\end{equation}
\end{proposition}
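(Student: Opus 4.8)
The plan is to recognise $\mathcal{L}^2_{Hess}$ as the range of a bounded linear operator on a Hilbert space and then to invoke the standard fact that the orthogonal complement of the range of a bounded operator equals the kernel of its adjoint. Define $T\colon L^2 \to \mathcal{L}^2$ by $Tf = \Hess(-\Delta)^{-1}f$; on the Fourier side this is multiplication by the bounded matrix symbol $-\frac{\xi}{|\xi|}\otimes\frac{\xi}{|\xi|}$, so $T$ is bounded and, by Definition \ref{BigDef}, $\mathcal{L}^2_{Hess}$ is exactly its range. The first step is to identify the adjoint. Integrating by parts twice---or, to stay rigorously at the level of $L^2$, simply reading off the conjugate symbol on the Fourier side---one obtains, for $f\in L^2$ and $M\in\mathcal{L}^2$,
\[
    \langle Tf, M\rangle = \sum_{i,j=1}^3 \langle \partial_i\partial_j(-\Delta)^{-1}f,\, M_{ij}\rangle = \langle f,\, \divr^2(-\Delta)^{-1}M\rangle,
\]
so that $T^\ast M = \divr^2(-\Delta)^{-1}M$. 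Therefore
\[
    \left(\mathcal{L}^2_{Hess}\right)^\perp = \ker T^\ast = \left\{M\in\mathcal{L}^2 : \divr^2(-\Delta)^{-1}M = 0\right\},
\]
which is precisely $\ker(\divr^2)$ as defined in Definition \ref{BigDef}. This exact adjoint computation already appears, in disguise, in the last integration by parts of the remark following Theorem \ref{HelmholtzOne}.

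Alternatively, and so as to keep the differential structure visible, one can argue entirely on the Fourier side with the machinery already set up. Since $\widehat{\divr^2(-\Delta)^{-1}M}(\xi) = -|\xi|^{-2}\,\xi^{tr}\widehat{M}(\xi)\xi$, membership $M\in\ker(\divr^2)$ is equivalent to the $(1,1)$-entry of $\left[\widehat{M}(\xi)\right]_{\mathcal{B}(\xi)}$ vanishing for almost every $\xi$. By Proposition \ref{FourierConstraint}, $\mathcal{L}^2_{Hess}$ is exactly the set of $M$ whose representation $\left[\widehat{M}(\xi)\right]_{\mathcal{B}(\xi)}$ lies in the span of the single basis matrix $e_1\otimes e_1$ from Lemma \ref{LemmaSym6}; using that the Fourier transform is an isometry and that the matrix inner product is independent of the choice of orthonormal basis, its orthogonal complement is the set of $M$ with $\left[\widehat{M}(\xi)\right]_{\mathcal{B}(\xi)}$ in the span of the remaining five basis matrices, i.e.\ exactly the matrices whose $(1,1)$-entry in the basis $\mathcal{B}(\xi)$ vanishes almost everywhere. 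The two descriptions coincide, which proves the proposition.

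There is no substantial obstacle here; the only two points that need a little care are the rigorous justification of the adjoint identity---cleanest done by passing to the bounded Fourier multiplier rather than by a formal integration by parts---and checking that the normalisation in $\divr^2(-\Delta)^{-1}$ matches the definition of $\ker(\divr^2)$ in Definition \ref{BigDef}, so that the object named in the statement really is $\mathcal{L}^2_{div^2free}$. Finally, combining this proposition with the orthogonal decomposition of Theorem \ref{HelmholtzOne} and Corollary \ref{OrthogonalComplement} at once gives $\ker(\divr^2) = \mathcal{L}^2_{st}\oplus\mathcal{L}^2_{divfree}$, equivalently $\mathcal{L}^2_{st} = \ker(\divr^2)\ominus\ker(\divr)$, which is the representation of the strain space announced just after the statement.
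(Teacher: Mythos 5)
Your primary argument---identifying $\mathcal{L}^2_{Hess}$ as the range of the bounded operator $T=\Hess(-\Delta)^{-1}$ and using $(\operatorname{ran}T)^\perp=\ker T^\ast$---is exactly the paper's proof, which simply phrases the adjoint identity as ``integrating by parts twice'' in each direction rather than invoking the range--kernel duality by name. The Fourier-side alternative you add is a valid and clean second proof that the paper does not write out for this proposition, though it is entirely in the spirit of Proposition \ref{FourierConstraint} and Lemma \ref{LemmaSym6}: reading off that $\ker(\divr^2)$ is the vanishing of the $(1,1)$-entry of $[\widehat{M}(\xi)]_{\mathcal{B}(\xi)}$ makes the orthogonality immediate. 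Both routes are correct; your observation that one must check the $(-\Delta)^{-1}$ normalisation against Definition \ref{BigDef} is a fair point of care, and your closing remark correctly anticipates the theorem $\ker(\divr^2)=\ker(\divr)\oplus\mathcal{L}^2_{st}$ that the paper deduces immediately afterward.
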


\begin{proof}
Suppose $M\in\mathcal{L}^2, \divr^2(-\Delta)^{-1}M=0$.
Then integrating by parts twice we can see that 
for all $f\in L^2$,
\begin{align}
    \left<\Hess(-\Delta)^{-1}f,M\right>
    &=
    \left<f, \divr^2(-\Delta)^{-1}M\right> \\
    &=0,
\end{align}
and therefore $M\in\left(\mathcal{L}^2_{Hess}\right)^\perp$

Now suppose $M\in\left(\mathcal{L}^2_{Hess}\right)^\perp$.
Again integrating by parts twice, this implies that for all 
$f\in L^2,$
\begin{align}
    \left<\divr^2(-\Delta)^{-1}M,f\right>
    &=
    \left<M,\Hess(-\Delta)^{-1}f\right> \\
    &=0.
\end{align}
Using the weak side characterization, this implies that 
$\divr^2(-\Delta)^{-1}M=0$ in $L^2$,
 and this completes the proof.
\end{proof}

We will now prove Theorem \ref{DifferenceThm}, which is restated for the reader's convenience.
\begin{theorem}
\begin{equation} \label{DifferenceA}
  \mathcal{L}^2_{st}=\ker\left(\divr^2\right) \ominus \ker(\divr),
\end{equation}
and equivalently
\begin{equation} \label{DifferenceB}
    \ker\left(\divr^2\right)= 
    \ker(\divr) \oplus \mathcal{L}^2_{st}.
\end{equation}
\end{theorem}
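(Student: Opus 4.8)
The plan is to obtain both identities as immediate consequences of the orthogonal decomposition in Theorem~\ref{HelmholtzOne} together with the characterization of $\left(\mathcal{L}^2_{Hess}\right)^\perp$ in Proposition~\ref{HessOrthog}; essentially no new computation is required beyond bookkeeping of orthogonal complements.

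First I would record the trivial inclusion $\ker(\divr)\subseteq\ker\left(\divr^2\right)$: since $\divr^2(M)=\nabla\cdot\divr(M)$, any $M$ with $\divr(-\Delta)^{-\frac{1}{2}}M=0$ automatically satisfies $\divr^2(-\Delta)^{-1}M=0$. (Equivalently, $\mathcal{L}^2_{divfree}\subseteq\left(\mathcal{L}^2_{Hess}\right)^\perp$ is visible directly from the orthogonality computed in the remark following Theorem~\ref{HelmholtzOne}.) This makes the notation $\ker\left(\divr^2\right)\ominus\ker(\divr)$ meaningful, namely the orthogonal complement of $\ker(\divr)$ taken inside the closed subspace $\ker\left(\divr^2\right)$.

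Next I would take the orthogonal complement of $\mathcal{L}^2_{Hess}$ in the decomposition $\mathcal{L}^2=\mathcal{L}^2_{st}\oplus\mathcal{L}^2_{Hess}\oplus\mathcal{L}^2_{divfree}$ of Theorem~\ref{HelmholtzOne}, which gives $\left(\mathcal{L}^2_{Hess}\right)^\perp=\mathcal{L}^2_{st}\oplus\mathcal{L}^2_{divfree}$. By Proposition~\ref{HessOrthog} the left-hand side equals $\ker\left(\divr^2\right)$, and by Definition~\ref{BigDef} we have $\mathcal{L}^2_{divfree}=\ker(\divr)$. Substituting yields $\ker\left(\divr^2\right)=\mathcal{L}^2_{st}\oplus\ker(\divr)$, which is precisely \eqref{DifferenceB}; and since this is an orthogonal direct sum with $\ker(\divr)\subseteq\ker\left(\divr^2\right)$, the summand $\mathcal{L}^2_{st}$ is exactly the orthogonal complement of $\ker(\divr)$ inside $\ker\left(\divr^2\right)$, which is \eqref{DifferenceA}.

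There is no genuine obstacle here; the content has been front-loaded into Theorem~\ref{HelmholtzOne} and Proposition~\ref{HessOrthog}. The only point deserving a sentence of care is the verification that the pieces really assemble into an orthogonal sum sitting inside $\ker\left(\divr^2\right)$ — that is, that $\mathcal{L}^2_{st}\perp\ker(\divr)$ and that both lie in $\ker\left(\divr^2\right)$ — but the orthogonality is part of Theorem~\ref{HelmholtzOne}, and the containment follows from $\ker(\divr)\subseteq\ker\left(\divr^2\right)$ together with $\mathcal{L}^2_{st}\subseteq\left(\mathcal{L}^2_{Hess}\right)^\perp=\ker\left(\divr^2\right)$.
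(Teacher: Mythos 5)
Your proposal is correct and follows essentially the same route as the paper: take the orthogonal complement of $\mathcal{L}^2_{Hess}$ in the decomposition of Theorem \ref{HelmholtzOne} to get $\left(\mathcal{L}^2_{Hess}\right)^\perp=\mathcal{L}^2_{st}\oplus\ker(\divr)$, then identify $\left(\mathcal{L}^2_{Hess}\right)^\perp$ with $\ker\left(\divr^2\right)$ via Proposition \ref{HessOrthog}. The extra remarks on the inclusion $\ker(\divr)\subseteq\ker\left(\divr^2\right)$ are a harmless (and slightly more careful) addition to what the paper writes.
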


\begin{proof}
First we will define the direct difference
for three Hilbert spaces, $X,Y,Z$.
We will say
$Z=X\ominus Y$ if and only if
$X=Y \oplus Z$.
The statements \eqref{DifferenceA} and \eqref{DifferenceB} are equivalent from the definition of the direct difference, so it suffices to prove \eqref{DifferenceB}.
We know from Theorem \ref{HelmholtzOne} that
\begin{equation}
    \mathcal{L}^2=\mathcal{L}^2_{Hess}
    \oplus \mathcal{L}^2_{st} \oplus \ker(\divr),
\end{equation}
and so it is clear that 
\begin{equation}
    \left(\mathcal{L}^2_{Hess}\right)^\perp
    =\mathcal{L}^2_{st} \oplus \ker(\divr).
\end{equation}
However, we showed in Proposition \ref{HessOrthog}, that
\begin{equation}
    \left(\mathcal{L}^2_{Hess}\right)^\perp
    = \ker \left(\divr^2\right),
\end{equation}
and so we may conclude that
\begin{equation} 
    \ker\left(\divr^2\right)= 
    \ker(\divr) \oplus \mathcal{L}^2_{st}.
\end{equation}
This completes the proof.
\end{proof}

Related to this characterization, we can compute the size of the strain projection $P_{st}$ of a matrix in terms of the curl of its divergence.
We now prove the identity \eqref{CurlFormula}, which is restated for the reader's convenience.

\begin{proposition}
For all $M\in\mathcal{L}^2,$ we have the identity for the magnitude of the projection of $M$ onto $\mathcal{L}^2_{st}$
\begin{equation}
    \left\|P_{st}(M)\right\|_{L^2}^2=
    2\left\|\nabla\times\divr(-\Delta)^{-1}
    M\right\|_{L^2}^2.
\end{equation}
\end{proposition}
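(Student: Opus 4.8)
The plan is to work entirely on the Fourier side, using the orthonormal basis $\mathcal{B}(\xi)$ from Proposition \ref{FourierConstraint}, where the strain projection becomes the projection onto the two-dimensional span $V_1$. First I would compute $\left[\widehat{M}(\xi)\right]_{\mathcal{B}(\xi)}$ in block form: writing $\hat{u} = \widehat{\divr(M)}$ schematically, the entries of $\widehat{M}$ in the directions $\frac{\xi}{|\xi|}\otimes\eta + \eta\otimes\frac{\xi}{|\xi|}$ and $\frac{\xi}{|\xi|}\otimes\mu + \mu\otimes\frac{\xi}{|\xi|}$ are exactly what survives the projection $P_{st}$, so that by Proposition \ref{isometry} (the isometry $\|\nabla_{sym}(-\Delta)^{-1/2}u\|^2 = \frac12\|u\|^2$ for $u \in \mathbf{L}^2_{df}$) one gets $\|P_{st}(M)\|_{L^2}^2 = \frac12\|P_{df}\,\divr(-\Delta)^{-1/2}M\|_{L^2}^2$ after identifying the divergence-free part of a suitable vector field built from the $\frac{\xi}{|\xi|}$-row of $\widehat{M}$.

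The key intermediate step is to pin down \emph{which} vector field this is. From the computation in the proof of Proposition \ref{FourierConstraint}, $\widehat{\divr(M)}(\xi) = 2\pi i\, \widehat{M}(\xi)\xi$, so $\divr(-\Delta)^{-1}M$ has Fourier transform proportional to $\frac{1}{|\xi|^2}\widehat{M}(\xi)\xi$, and its components in the $\eta,\mu$ directions record precisely the $(1,2)$ and $(1,3)$ entries of $\left[\widehat{M}(\xi)\right]_{\mathcal{B}(\xi)}$, i.e. the entries carrying the $V_1$-component of $\widehat{M}$, while the $\frac{\xi}{|\xi|}$-component of this vector corresponds to the $\mathcal{L}^2_{Hess}$ part. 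Projecting off the $\frac{\xi}{|\xi|}$-component is the Helmholtz projection $P_{df}$, and on the Fourier side $|\xi|\, P_{df}(-\Delta)^{-1/2}\divr(-\Delta)^{-1}M$ is, up to the constant, the curl of $\divr(-\Delta)^{-1}M$: indeed $\widehat{\nabla\times v}(\xi) = 2\pi i\,\xi\times\hat v(\xi)$, and $|\xi\times\hat v| = |\xi|\,|P_{\xi^\perp}\hat v|$. This is where the factor $2$ in the stated identity comes from: one factor $\frac12$ from Proposition \ref{isometry}, cancelled and then the curl-vs-projection relation $\|\nabla\times v\|^2 = \|{\,}|\xi|\,P_{df}v\|^2$ contributing the compensating factor, leaving an overall $2$.

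Concretely the chain I would assemble is: $\|P_{st}(M)\|^2 = 2\|\text{(the }V_1\text{ part of }\widehat M\text{)}\|^2$ directly from the normalization of the $V_1$ basis vectors; then identify that $V_1$-part with $\bigl(\text{entries }(1,2),(1,3)\text{ of }[\widehat M]_{\mathcal B(\xi)}\bigr)$, hence with the $\eta,\mu$-components of $\frac{1}{2\pi i|\xi|^2}\widehat M(\xi)\xi = \widehat{\divr(-\Delta)^{-1}M}(\xi)$ scaled by $|\xi|$; then recognize $|\xi|\,(\eta,\mu\text{-part of }\hat v)$ as $\tfrac{1}{2\pi}|\widehat{\nabla\times v}(\xi)|$ with $v = \divr(-\Delta)^{-1}M$, and invoke Plancherel. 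The main obstacle is purely bookkeeping: tracking the $2\pi$ factors and the $\frac{1}{\sqrt2}$ normalizations through the basis change so that the constant lands exactly on $2$, and making sure the cross-term structure (the fact that only the off-diagonal $(1,2),(1,3)$ slots feed $P_{st}$, not the $(1,1)$ slot which feeds $P_{Hess}$) is correctly encoded by the curl rather than the full gradient. Once the constants are verified the identity follows immediately; alternatively, one could bypass the Fourier computation and argue abstractly that $\divr P_{st}(-\Delta)^{-1} = P_{df}\divr(-\Delta)^{-1}$ from the commutator relation \eqref{ProjectionDiv}, combine with Proposition \ref{isometry} applied to the divergence-free vector field $\divr(-\Delta)^{-1/2}P_{st}(M)$, and then pass from $P_{df}$ to $\nabla\times$ using the classical identity $\|\nabla\times(-\Delta)^{-1/2}w\|_{L^2} = \|w\|_{L^2}$ for $w \in \mathbf{L}^2_{df}$ (the second equality in Proposition \ref{isometry}). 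I would present this second route as the clean proof and relegate the Fourier bookkeeping to a remark.
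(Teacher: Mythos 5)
Your second (``clean'') route is essentially the paper's own proof: the paper decomposes $M=\Hess(-\Delta)^{-1}f+\nabla_{sym}(-\Delta)^{-1/2}u+Q$, takes the divergence (which kills $Q$), takes the curl (which kills the gradient term), and applies both equalities of Proposition \ref{isometry} --- exactly the chain you describe via the commutator relation $\divr P_{st}=P_{df}\divr$ and the identity $\|\nabla\times(-\Delta)^{-1/2}u\|_{L^2}=\|u\|_{L^2}$. One small caution: the intermediate identity in your first paragraph should read $\|P_{st}(M)\|_{L^2}^2=2\|P_{df}\divr(-\Delta)^{-1/2}M\|_{L^2}^2$ rather than $\tfrac12\|\cdots\|^2$ (since $P_{df}\divr(-\Delta)^{-1/2}M=-\tfrac12 u$), but as you flagged the constants as the bookkeeping to be verified and your final target and logic are correct, this does not affect the argument.
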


\begin{proof}
We will begin by letting
\begin{equation}
    M=\Hess(-\Delta)^{-1}f
    +\nabla_{sym}(-\Delta)^{-\frac{1}{2}}u
    +Q,
\end{equation}
where $\nabla\cdot u=0, \divr(Q)=0$.
Taking the divergence of this representation, we find
\begin{equation}
    -2\divr(M)=2\nabla f+(-\Delta)^{\frac{1}{2}}u,
\end{equation}
and taking the curl and inverting the Laplacian we find that
\begin{equation}
    -2\nabla\times\divr(-\Delta)^{-1}M= 
    \nabla\times (-\Delta)^{-\frac{1}{2}}u.
\end{equation}
Applying Proposition \ref{isometry}, we find that
\begin{align}
    \left\|P_{st}(M)\right\|_{L^2}^2
    &=
    \left\|\nabla_{sym}(-\Delta)^{-\frac{1}{2}}
    u\right\|_{L^2}^2 \\
    &=
    \frac{1}{2}\left\|\nabla\times
    (-\Delta)^{-\frac{1}{2}}u\right\|_{L^2}^2 \\
    &=
    2\left\|\nabla\times\divr(-\Delta)^{-1}
    M\right\|_{L^2}^2.
\end{align}
\end{proof}

We will now consider the the rotational invariants of the space $\mathcal{L}^2_{st}$ proving Proposition \ref{RotInvarIntro}, which is restated here for the reader's convenience.

\begin{proposition} \label{RotInvar}
For all $S\in\mathcal{L}^2_{st}$, and for all
rotation matrices $Q\in SO(3),$ $S^Q \in\mathcal{L}^2_{st}$,
where
\begin{equation}
    S^Q(x)= Q^{tr} S(Qx) Q
\end{equation}
\end{proposition}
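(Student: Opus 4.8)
The plan is to verify the rotation invariance directly from the generating-field definition $\mathcal{L}^2_{st}=\{\nabla_{sym}(-\Delta)^{-\frac12}u : u\in\mathbf{L}^2_{df}\}$, producing an explicit divergence-free field whose symmetrized normalized gradient equals $S^Q$. First I would fix $S\in\mathcal{L}^2_{st}$ and $Q\in SO(3)$, and write $S=\nabla_{sym}(-\Delta)^{-\frac12}u$ with $\nabla\cdot u=0$. The natural candidate for the generating field of $S^Q$ is $u^Q(x)=Q^{tr}u(Qx)$; I would then compute $\nabla(u^Q)(x)=Q^{tr}(\nabla u)(Qx)Q$ by the chain rule, symmetrize to get $\nabla_{sym}(u^Q)(x)=Q^{tr}(\nabla_{sym}u)(Qx)Q$, and observe that since $Q$ is orthogonal the operator $(-\Delta)^{-\frac12}$ (a Fourier multiplier in $|\xi|$) commutes with the change of variables $x\mapsto Qx$, so $(-\Delta)^{-\frac12}$ applied to $v(Q\cdot)$ equals $\big((-\Delta)^{-\frac12}v\big)(Q\cdot)$. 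Combining these gives $\nabla_{sym}(-\Delta)^{-\frac12}u^Q(x)=Q^{tr}\big(\nabla_{sym}(-\Delta)^{-\frac12}u\big)(Qx)Q = Q^{tr}S(Qx)Q = S^Q(x)$.

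It then remains to check that $u^Q\in\mathbf{L}^2_{df}$, i.e.\ $\nabla\cdot u^Q=0$. This follows from the same chain-rule computation: $\nabla\cdot u^Q(x)=\tr\big(\nabla u^Q(x)\big)=\tr\big(Q^{tr}(\nabla u)(Qx)Q\big)=\tr\big((\nabla u)(Qx)\big)=(\nabla\cdot u)(Qx)=0$, using the cyclicity of the trace and $Q^{tr}Q=I$. (Alternatively one can invoke the rotation invariance of $\mathbf{L}^2_{df}$ quoted from \cite{MajdaBertozzi}, noting that $u^Q$ here is exactly the field $x\mapsto Q^{tr}u(Qx)$ from that statement.) One also records that $\|u^Q\|_{L^2}=\|u\|_{L^2}$ by the change of variables and orthogonality of $Q$, so $u^Q$ is genuinely in $L^2$, hence in $\mathbf{L}^2_{df}$, and therefore $S^Q=\nabla_{sym}(-\Delta)^{-\frac12}u^Q\in\mathcal{L}^2_{st}$.

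The only genuinely delicate point—and the one I would state carefully rather than wave at—is the commutation of the fractional Laplacian with the rotation. On the Fourier side, if $v_Q(x)=v(Qx)$ then $\widehat{v_Q}(\xi)=\widehat{v}(Q\xi)$ (again because $\det Q=1$ and $Q^{tr}=Q^{-1}$), so $\widehat{(-\Delta)^{-\frac12}v_Q}(\xi)=|\xi|^{-1}\widehat{v}(Q\xi)=|Q\xi|^{-1}\widehat v(Q\xi)=\widehat{(-\Delta)^{-\frac12}v}(Q\xi)=\widehat{\big((-\Delta)^{-\frac12}v\big)_Q}(\xi)$, where we used $|Q\xi|=|\xi|$. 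This is the identity $(-\Delta)^{-\frac12}(v\circ Q)=\big((-\Delta)^{-\frac12}v\big)\circ Q$, and an entirely analogous computation handles the $\nabla$ and $\nabla_{sym}$ pieces simultaneously on the Fourier side if one prefers to do the whole argument there: $\widehat{S^Q}(\xi)=Q^{tr}\widehat S(Q\xi)Q$, and Proposition \ref{FourierConstraint} shows membership in $\mathcal{L}^2_{st}$ is characterized by the representation of $\widehat S(\xi)$ in the frame $\{\xi/|\xi|,\eta,\mu\}$ lying in a fixed two-dimensional span; since conjugating by $Q^{tr}$ and replacing $\xi$ by $Q\xi$ just transports the adapted frame $\mathcal{B}(Q\xi)$ to $Q^{tr}\mathcal{B}(Q\xi)$, which is again an orthonormal frame of the required form for $\xi$, the span condition is preserved. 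Either route works; I expect the Fourier-side version via Proposition \ref{FourierConstraint} to be the cleanest to write.
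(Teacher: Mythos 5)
Your proof is correct and follows essentially the same route as the paper's: construct the candidate field $u^{Q}(x)=Q^{tr}u(Qx)$, verify via the chain rule that $\nabla u^{Q}(x)=Q^{tr}\nabla u(Qx)Q$, check $\nabla\cdot u^{Q}=0$ by cyclicity of the trace, and symmetrize. The one (small) difference is that the paper parameterizes $\mathcal{L}^2_{st}$ as $\nabla_{sym}\dot{\mathbf H}^1_{df}$ and so never needs to commute $(-\Delta)^{-1/2}$ past the rotation, whereas your choice of the $\nabla_{sym}(-\Delta)^{-1/2}\mathbf{L}^2_{df}$ parameterization forces that extra commutation lemma — which you state and prove correctly on the Fourier side. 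Your sketched alternative argument purely via Proposition~\ref{FourierConstraint} is also valid and is not in the paper; it has the advantage of making the rotation-covariance of the adapted frame $\mathcal{B}(\xi)$ the whole story, at the cost of needing to be careful that the frame-covariance really does preserve the two-dimensional span, which you gesture at rather than spell out.
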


\begin{proof} 
Suppose $S\in\mathcal{L}^2_{st}$ and $Q\in SO(3)$.
We know that there exists a unique 
$u\in\mathbf{\dot{H}}^1_{df},$ such that
\begin{equation}
    S=\nabla_{sym}u.
\end{equation}
Following the standard approach for the rotational invariance for the space of divergence free vector fields---see for instance Chapter 1 in \cite{MajdaBertozzi}---let
\begin{equation}
    u^Q(x)=Q^{tr}u(Qx)
\end{equation}
Observe that
\begin{align}
    u^Q_j(x)
    &=
    \sum_{k=1}^3 Q^{tr}_{jk} u_k(Qx) \\
    &=
    \sum_{k=1}^3 Q_{kj} u_k(Qx)
\end{align}
Applying the chain rule we find that
\begin{equation}
    \partial_i u^Q_j (x)= 
    \sum_{k,m=1}^3
    Q_{kj} Q_{mi} \left(\partial_m u_k\right)(Qx).
\end{equation}
Writing out this gradient as a matrix, we can see that
\begin{equation}
    \nabla u^Q(x)=Q^{tr} \nabla u(Qx) Q.
\end{equation}

Because $Q\in SO(3),$ we can immediately see that 
\begin{equation}
    \|\nabla u^Q\|_{L^2}=\|\nabla u\|_{L^2},
\end{equation}
and furthermore that 
\begin{align}
    \nabla \cdot u^Q(x)
    &=
    \tr\left(\nabla u^Q(x)\right) \\
    &=
    \tr\left(Q^{tr}\nabla u(Qx) Q\right) \\
    &=
    \tr \left(\nabla u(Qx)\right) \\
    &=
    (\nabla \cdot u)(Qx)\\
    &=
    0.
\end{align}
Therefore we can see that $u^Q \in \mathbf{H}^1_{df}$.
Finally we observe that
\begin{align}
    \nabla_{sym} u^Q(x)
    &=
    \frac{1}{2}\nabla u^Q(x)
    +\frac{1}{2}\left(\nabla u^Q\right)^{tr}(x) \\
    &=
    \frac{1}{2}Q^{tr}\nabla u(Qx) Q
    +\frac{1}{2}Q^{tr} \left(\nabla u\right)^{tr}(Qx)Q \\
    &=
    Q^{tr} S(Qx) Q.
\end{align}
Therefore we can see that $S^Q=\nabla_{sym} u^Q$, and so $S^Q\in\mathcal{L}^2_{st}$,
and this completes the proof.
\end{proof}

\section{Inequalities and projection bounds}
\label{InequalitiesSection}

In this section, we consider the projection of max-mid matrices onto the space $\mathcal{L}^2_{st}$, in particular proving Theorem \ref{OneDirectionMaxIntro}, which will be restated in this section as Theorem \ref{OneDirectionMax}.
We will begin by proving a sharp inequality for a single diagonal entry of a matrix $S\in \mathcal{L}^2_{st}$ in terms of a general, fixed basis.

\begin{theorem} \label{OneDiagCompBound}
For all $S\in\mathcal{L}^2_{st}$
\begin{equation}
    \|S_{33}\|_{L^2}^2 \leq 
    \frac{1}{2} \|S\|_{L^2}^2
\end{equation}
More generally, for all $S\in\mathcal{L}^2_{st}$ and
for all fixed unit vectors $v\in\mathbb{R}^3, |v|=1,$
\begin{equation} \label{OneDirectionBound}
    \|S \odot (v\otimes v)\|_{L^2}^2 \leq 
    \frac{1}{2} \|S\|_{L^2}^2
\end{equation}
\end{theorem}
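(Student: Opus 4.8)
The plan is to work entirely on the Fourier side, using the characterization of $\mathcal{L}^2_{st}$ from Proposition \ref{FourierConstraint} together with Plancherel. Since the quantity $\|S\odot(v\otimes v)\|_{L^2}^2 = \int_{\mathbb{R}^3} \bigl|\widehat{S}(\xi) \odot (v\otimes v)\bigr|^2 \diff\xi$ (the inner product $\odot$ with the fixed rank-one matrix $v\otimes v$ picks out the component $v^{tr}\widehat{S}(\xi)v$, up to the standard Frobenius normalization), it suffices to prove the pointwise-in-$\xi$ estimate $\bigl|v^{tr}\widehat{S}(\xi)v\bigr|^2 \le \tfrac12 |\widehat{S}(\xi)|^2$ for a.e. $\xi$, for any $S\in\mathcal{L}^2_{st}$; integrating then gives \eqref{OneDirectionBound}, and taking $v=e_3$ gives the first inequality as a special case. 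Here $|\widehat{S}(\xi)|^2$ is the squared Frobenius norm, which by Plancherel integrates to $\|S\|_{L^2}^2$.

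For the pointwise step, I fix $\xi\neq 0$ and use the orthonormal basis $\mathcal{B}(\xi)=\{\xi/|\xi|,\eta,\mu\}$. By Proposition \ref{FourierConstraint}, in this basis $[\widehat{S}(\xi)]_{\mathcal{B}(\xi)} = \tfrac{i}{\sqrt{2}}\bigl(\alpha\, E_{12} + \beta\, E_{13}\bigr)$ (symmetrized), i.e. it is determined by two scalars $\alpha,\beta\in\mathbb{C}$, with $|\widehat{S}(\xi)|^2 = |\alpha|^2 + |\beta|^2$ after accounting for the $\tfrac{1}{\sqrt2}$ normalizations of the two basis matrices. Writing $v$ in the basis $\mathcal{B}(\xi)$ as $v = c_0\,\xi/|\xi| + c_1\eta + c_2\mu$ with $|c_0|^2+|c_1|^2+|c_2|^2=1$, a short computation gives $v^{tr}\widehat{S}(\xi)v = \sqrt{2}\,i\,c_0(\alpha c_1 + \beta c_2)$ (the strain Fourier symbol has zero $(\eta,\eta)$, $(\mu,\mu)$, $(\eta,\mu)$ entries, so only cross terms with the $\xi/|\xi|$ direction survive). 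Then by Cauchy--Schwarz,
\begin{equation}
    \bigl|v^{tr}\widehat{S}(\xi)v\bigr|^2 = 2\,|c_0|^2\,|\alpha c_1 + \beta c_2|^2 \le 2\,|c_0|^2\,(|c_1|^2+|c_2|^2)\,(|\alpha|^2+|\beta|^2).
\end{equation}
Since $|c_0|^2(|c_1|^2+|c_2|^2) \le \bigl(\tfrac{|c_0|^2 + |c_1|^2+|c_2|^2}{2}\bigr)^2 = \tfrac14$ by AM--GM, the right side is at most $\tfrac12(|\alpha|^2+|\beta|^2) = \tfrac12|\widehat{S}(\xi)|^2$, which is exactly the claimed pointwise bound. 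Integrating in $\xi$ and applying Plancherel to both sides completes the proof.

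I expect the only real subtlety — not an obstacle so much as bookkeeping — to be getting the normalization constants right: tracking the factors of $\tfrac{1}{\sqrt2}$ in the basis $\mathcal{B}$ of Lemma \ref{LemmaSym6}, the factor of $2$ relating $\|\widehat{u}\|$ to $\|\widehat{M}\|$ implicit in Proposition \ref{FourierConstraint}, and the precise meaning of $S\odot(v\otimes v)$ (whether it denotes the scalar $v^{tr}Sv$ or that scalar times $v\otimes v$), so that the constant comes out to exactly $\tfrac12$ rather than something off by a factor. The sharpness (equality when $v$ is at $45^\circ$ to $\xi/|\xi|$ in the plane spanned by $\xi/|\xi|$ and the relevant combination of $\eta,\mu$, uniformly in $\xi$ — which is not attainable in $L^2$ but is approached) is worth a remark but is not needed for the stated inequality.
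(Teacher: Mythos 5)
Your proof is correct and is essentially the same argument as the paper's: both pass to the Fourier side, use the span characterization of $\widehat{S}(\xi)$ from Proposition \ref{FourierConstraint}, and reduce the pointwise bound to the elementary inequality $t(1-t)\le \tfrac14$ with $t=\bigl(v\cdot \xi/|\xi|\bigr)^2$. The only difference is bookkeeping: the paper works directly with $\widehat{u}(\xi)$ and the orthonormality of $\xi/|\xi|$ and $\widehat{u}/|\widehat{u}|$, whereas you expand $[\widehat{S}(\xi)]_{\mathcal{B}(\xi)}$ in the two basis matrices and apply Cauchy--Schwarz; these are the same computation in different coordinates.
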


\begin{proof}
Noting that $S_{33}=S\odot (e_3\otimes e_3)$, it suffices to prove the inequality \eqref{OneDirectionBound}.
We know that $S\in \mathcal{L}^2_{st},$ so let
\begin{equation}
    S=\nabla_{sym}(-\Delta)^{-\frac{1}{2}}u,
\end{equation}
for some $u\in \mathbf{L}^2_{df}$.
Applying the isometry from Proposition \ref{isometry}, we can conclude that
\begin{equation}
    \|S\|_{L^2}^2=\frac{1}{2}\|u\|_{L^2}^2.
\end{equation}
Using the fact that the Fourier transform is an isometry on $L^2$ and the fact that $v\in\mathbb{R}^3$ is a fixed vector we compute that
\begin{align}
    \|S \odot (v\otimes v)\|_{L^2}^2
    &=
    \left\|\mathcal{F}\left(S \odot 
    (v\otimes v)\right)\right\|_{L^2}^2 \\
    &=
    \left\|\hat{S} \odot(v\otimes v)\right\|_{L^2}^2.
\end{align}
Observing that 
\begin{equation}
    \hat{S}=\frac{i}{2}\left(
    \frac{\xi}{|\xi|}\otimes \hat{u}
    +\hat{u}\otimes \frac{\xi}{|\xi|}
    \right),
\end{equation}
we can compute that
\begin{equation}
    \hat{S}\odot (v\otimes v)=
    \left(v\cdot \frac{\xi}{|\xi|}\right)
    \left(v\cdot \hat{u}\right),
\end{equation}
and so
\begin{align}
    \|S \odot (v\otimes v)\|_{L^2}^2
    &=
    \int_{\mathbb{R}^3} 
    \left(v\cdot \frac{\xi}{|\xi|}\right)^2
    \left(v\cdot \hat{u}(\xi)\right)^2 \diff\xi \\
    &=
    \int_{\mathbb{R}^3} 
    \left(v\cdot \frac{\xi}{|\xi|}\right)^2
    \left(v\cdot \frac{\hat{u}(\xi)}
    {\left|\hat{u}(\xi)\right|}\right)^2
    \left|\hat{u}(\xi)\right|^2 \diff\xi.
\end{align}

Using the divergence free constraint, we can see that 
$\frac{\hat{u}(\xi)}{\left|\hat{u}(\xi)\right|}$ and
$\frac{\xi}{|\xi|}$ are orthonormal vectors, and therefore
\begin{align}
    \left(v\cdot \frac{\xi}{|\xi|}\right)^2
    +\left(v\cdot \frac{\hat{u}(\xi)}
    {\left|\hat{u}(\xi)\right|}\right)^2
    &\leq
    |v|^2 \\
    &=
    1,
\end{align}
so we may conclude that
\begin{equation}
    \left(v\cdot \frac{\hat{u}(\xi)}
    {\left|\hat{u}(\xi)\right|}\right)^2
    \leq
    1- \left(v\cdot \frac{\xi}{|\xi|}\right)^2.
\end{equation}
Setting $r=\left(v\cdot \frac{\xi}{|\xi|}\right)^2,$ observing that $0\leq r \leq 1,$ and applying Plancherel's identity and the Proposition \ref{isometry}, we find that
\begin{align}
    \|S \odot (v\otimes v)\|_{L^2}^2
    &\leq 
    \int_{\mathbb{R}^3}
    \left(v\cdot \frac{\xi}{|\xi|}\right)^2
    \left(1- \left(v\cdot \frac{\xi}{|\xi|}\right)^2\right)
    \left|\hat{u}(\xi)\right|^2 \diff\xi \\
    &\leq
    \int_{\mathbb{R}^3}
    \left(\sup_{0\leq r\leq 1} r(1-r)\right)
    \left|\hat{u}(\xi)\right|^2 \diff\xi \\
    &=
    \frac{1}{4}\int_{\mathbb{R}^3}
    \left|\hat{u}(\xi)\right|^2 \diff\xi \\
    &=
    \frac{1}{4}\|u\|_{L^2}^2 \\
    &=
    \frac{1}{2}\|S\|_{L^2}^2.
\end{align}
This completes the proof.

\end{proof}

We will now show that this inequality is sharp by providing a family of near maximizers.

\begin{theorem} \label{OneDiagCompSharp}
Suppose $S\in\mathcal{L}^2_{st}$ and 
$\supp\left(\hat{S}\right) \subset 
\left\{\xi\in\mathbb{R}^3: 
\frac{1}{2}-\frac{\epsilon}{2}<\frac{\xi_3^2}{|\xi|^2}
<\frac{1}{2}+\frac{\epsilon}{2}\right\},$ 
for some $0<\epsilon<1$.
Furthermore, suppose $S=\nabla_{sym}u$ with $u\in \dot{H}^1_{df}$ satisfying
$e_\theta \cdot \hat{u}(\xi)=0,$ for all $\xi\in\mathbb{R}^3.$
Then
\begin{equation}
    \|S_{33}\|_{L^2}^2 
    \geq \frac{1}{2}(1-\epsilon)^2 \|S\|_{L^2}^2.
\end{equation}
\end{theorem}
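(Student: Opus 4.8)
The plan is to retrace the computation in the proof of Theorem~\ref{OneDiagCompBound} with $v=e_3$, and to observe that under the two additional hypotheses every inequality used there becomes (very nearly) an equality. Recall that every $S\in\mathcal{L}^2_{st}$ can be written as $S=\nabla_{sym}(-\Delta)^{-\frac12}w$ for a unique $w\in\mathbf{L}^2_{df}$; here $w=(-\Delta)^{\frac12}u$, so $\hat w(\xi)=2\pi|\xi|\hat u(\xi)$ and the hypothesis $e_\theta\cdot\hat u=0$ is equivalent to $e_\theta\cdot\hat w=0$. The pointwise identity $|\hat S(\xi)|^2=\tfrac12|\hat w(\xi)|^2$ (the cross term vanishes because $\hat w\perp\xi$), which is the pointwise form of Proposition~\ref{isometry}, shows that $\hat S$ and $\hat w$ have the same support up to a null set, so the frequency-support hypothesis passes from $\hat S$ to $\hat w$. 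With $v=e_3$ the computation in the proof of Theorem~\ref{OneDiagCompBound} then gives the exact identity
\begin{equation}
    \|S_{33}\|_{L^2}^2
    =\int_{\mathbb{R}^3}
    \left(e_3\cdot\frac{\xi}{|\xi|}\right)^2
    \left|e_3\cdot\frac{\hat w(\xi)}{|\hat w(\xi)|}\right|^2
    |\hat w(\xi)|^2\,\diff\xi ,
\end{equation}
together with $\|S\|_{L^2}^2=\tfrac12\|w\|_{L^2}^2$.

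The key step is to evaluate the middle factor exactly. Set $r=r(\xi)=\xi_3^2/|\xi|^2$; on $\supp\hat w$ we have $r<\tfrac12+\tfrac{\epsilon}{2}<1$, so $\xi$ is off the $\xi_3$-axis and the triple $\{\xi/|\xi|,\,e_\theta,\,e_\phi\}$, with $e_\phi:=(e_3-(e_3\cdot\tfrac{\xi}{|\xi|})\tfrac{\xi}{|\xi|})/\sqrt{1-r}$, is a real orthonormal basis of $\mathbb{R}^3$. Since $\hat w(\xi)$ is orthogonal to $\xi$ (the divergence-free constraint) and to $e_\theta$ (by hypothesis), it is a complex multiple of $e_\phi(\xi)$, and therefore
\begin{equation}
    \left|e_3\cdot\frac{\hat w(\xi)}{|\hat w(\xi)|}\right|^2
    =\bigl(e_3\cdot e_\phi(\xi)\bigr)^2=1-r ,
\end{equation}
which replaces the bound $\le 1-r$ used for the upper estimate by an equality. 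Hence $\|S_{33}\|_{L^2}^2=\int_{\mathbb{R}^3}r(1-r)\,|\hat w(\xi)|^2\,\diff\xi$. Finally, $r(1-r)=\tfrac14-(r-\tfrac12)^2$ is minimized on $\{\tfrac12-\tfrac\epsilon2<r<\tfrac12+\tfrac\epsilon2\}$ at the endpoints, so $r(1-r)\ge\tfrac14(1-\epsilon^2)\ge\tfrac14(1-\epsilon)^2$ there, and
\begin{equation}
    \|S_{33}\|_{L^2}^2\ge\frac{(1-\epsilon)^2}{4}\,\|w\|_{L^2}^2
    =\frac{(1-\epsilon)^2}{2}\,\|S\|_{L^2}^2 .
\end{equation}

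The only genuine subtlety — the step that needs care rather than routine computation — is the geometric identification in the second paragraph: one must work with the complex modulus $\bigl|e_3\cdot\hat w/|\hat w|\bigr|^2$ (which is what actually appears in the squared-norm identity and is phase-independent, so it equals $(e_3\cdot e_\phi)^2$ irrespective of the $\mathbb{C}$-scalar relating $\hat w$ and $e_\phi$), and one must verify that $e_\theta$ and $e_\phi$ are well defined on the relevant frequency support, which is exactly what the condition $r<\tfrac12+\tfrac\epsilon2<1$ guarantees. Everything else is a direct reuse of the estimates already in the proof of Theorem~\ref{OneDiagCompBound} together with the isometry of Proposition~\ref{isometry}.
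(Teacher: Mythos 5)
Your proof is correct and follows essentially the same route as the paper's: both work in Fourier space with the cylindrical frame $\{\xi/|\xi|,e_\theta,e_3\}$, use the divergence-free constraint together with $e_\theta\cdot\hat u=0$ to pin $\hat u(\xi)$ to the meridional direction, reduce $\|S_{33}\|_{L^2}^2$ to $\int r(1-r)|\hat w|^2$ with $r=\xi_3^2/|\xi|^2$, and bound $r(1-r)$ below on the prescribed frequency shell. Your packaging of this as the equality case of the estimate in Theorem \ref{OneDiagCompBound} (with the careful treatment of the complex modulus, which the paper's notation glosses over) is a slightly cleaner presentation of the identical computation.
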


\begin{proof}
Recall that the basis for $\mathbb{R}^3$ in cylindrical coordinates is given by
\begin{align}
    e_r &= 
    \frac{1}{\left(\xi_1^2+\xi_2^2\right)^\frac{1}{2}}
    \left(\begin{array}{ccc}
         \xi_1  \\
         \xi_2 \\
         0
    \end{array}\right) \\
        e_\theta &= 
    \frac{1}{\left(\xi_1^2+\xi_2^2\right)^\frac{1}{2}}
    \left(\begin{array}{ccc}
         -\xi_2  \\
         \xi_1 \\
         0
    \end{array}\right) \\
    e_3 &=
    \left(\begin{array}{ccc}
        0  \\
        0 \\
        1
    \end{array}\right).
\end{align}
We know that $e_\theta\cdot \hat{u}(\xi)=0$, so
\begin{equation}
    \hat{u}(\xi)= f(\xi) e_r+ g(\xi) e_3.
\end{equation}
Observe that $\xi=\left(\xi_1^2+\xi_2^2\right)^\frac{1}{2}
e_r+\xi_3 e_3$, and using the constraint $\nabla\cdot u=0$, 
we find
\begin{align}
    \xi\cdot\hat{u}(\xi)
    &=
    \left(\xi_1^2+\xi_2^2\right)^\frac{1}{2} f(\xi)
    +\xi_3 g(\xi) \\
    &=0.
\end{align}
Therefore 
\begin{equation}
    f(\xi)=-\frac{\xi_3}
    {\left(\xi_2^1+\xi_2^2\right)^\frac{1}{2}} g(\xi),
\end{equation}
and so
\begin{equation}
    \hat{u}(\xi)=g(\xi) \left( e_3
    -\frac{\xi_3}{\left(\xi_1^2+\xi_2^2\right)^\frac{1}{2}}
    e_r \right).
\end{equation}
In particular this means that
\begin{align}
    \left|\hat{u}(\xi)\right|^2
    &=
    |g(\xi)|^2\left(1+\frac{\xi_3^2}{\xi_1^2+\xi_2^2}\right) \\
    &=
    |g(\xi)|^2 \left(\frac{\xi_1^2+\xi_2^2+\xi_3^2}
    {\xi_1^2+\xi_2^2}\right)\\
    &=
    |g(\xi)|^2 \left(\frac{|\xi|^2}{|\xi|^2-\xi_3^2}\right).
\end{align}

Next we will note that 
\begin{align}
    \|S_{33}\|_{L^2}^2
    &=
    \|\partial_3 u_3\|_{L^2}^2 \\
    &=
    \left\|\widehat{\partial_3 u_3}\right\|_{L^2}^2 \\
    &=
    \int_{\mathbb{R}^3} 4\pi^2\xi_3^2
    \left|g(\xi)\right|^2 \diff\xi \\
    &=
    \int_{\mathbb{R}^3} 4\pi^2\xi_3^2
    \left|\hat{u}(\xi)\right|^2
    \left(\frac{|\xi|^2-\xi_3^2}{|\xi|^2}\right)\diff\xi \\
    &=
    \int_{\mathbb{R}^3} 4\pi^2|\xi|^2
    \left|\hat{u}(\xi)\right|^2
    \frac{\xi_3^2}{|\xi|^2}
    \left(1-\frac{\xi_3^2}{|\xi|^2}\right)\diff\xi .
\end{align}
Recalling that by hypothesis, for all $\xi\in\mathbb{R}^3$ such that $\hat{u}(\xi)\neq 0$,
\begin{equation}
    \frac{1}{2}-\frac{\epsilon}{2}
    <\frac{\xi_3^2}{|\xi|^2}
    <\frac{1}{2}+\frac{\epsilon}{2},
\end{equation}
we can see that
\begin{equation}
    \frac{\xi_3^2}{|\xi|^2}
    \left(1-\frac{\xi_3^2}{|\xi|^2}\right)
    >
    \left(\frac{1}{2}-\frac{\epsilon}{2}\right)^2.
\end{equation}
Applying Plancherel's formula and Proposition \ref{isometry}, we find that
\begin{align}
    \|S_{33}\|_{L^2}^2
    &\geq
    \left(\frac{1}{2}-\frac{\epsilon}{2}\right)^2
    \int_{\mathbb{R}^3} 4 \pi^2 |\xi|^2 
    \left|\hat{u}(\xi)\right|^2 \diff\xi \\
    &=
    \left(\frac{1}{2}-\frac{\epsilon}{2}\right)^2
    \|\nabla u\|_{L^2}^2 \\
    &=
    \left(\frac{1}{2}-\frac{\epsilon}{2}\right)^2
    2\|S\|_{L^2}^2 \\
    &=
    (1-\epsilon)^2 \frac{1}{2}\|S\|_{L^2}^2
\end{align}
\end{proof}

\begin{theorem} \label{OneDirectionMax}
Let $v\in\mathbb{R}^3$ be a fixed unit vector, $|v|=1$, then purely axial compression along the $v$ axis, and planar stretching in the plane perpendicular to $v$ is not possible. That is there does not exist 
$\lambda \in L^2, \lambda\geq 0$,
not identically zero, such that
\begin{equation}
    \frac{\lambda}{\sqrt{6}}\left(I_3-v\otimes v\right)
    \in \mathcal{L}^2_{st}.
\end{equation}
In fact, for all $v\in\mathbb{R}^3, |v|=1$ we have the explicit bound on the projection of matrices of the form onto $\mathcal{L}^2_{st}$,
\begin{equation}
    \sup_{\substack{\|\lambda\|_{L^2}=1 \\
    \lambda \geq 0}}
    \left\|P_{st}\left(\frac{\lambda}{\sqrt{6}}
    \left(I_3-3v \otimes v\right)\right)
    \right\|_{L^2}^2=\frac{3}{4}.
\end{equation}

We can also give explicitly a family of near maximizers for the case $v=e_3$ as follows. Suppose
$\|\lambda\|_{L^2}=1$ and
$\supp\left(\hat{\lambda}\right) \in 
\left\{\xi\in\mathbb{R}^3: 
\frac{1}{2}-\frac{\epsilon}{2}
<\frac{\xi_3^2}{|\xi|^2}
<\frac{1}{2}+\frac{\epsilon}{2}\right\}$.
Then
\begin{equation}
    \left\|P_{st}\left(\frac{\lambda}{\sqrt{6}}
    \left(I_3-3 e_3\otimes e_3\right)\right)\right\|_{L^2}
    >\frac{3}{4}(1-\epsilon)^2.
\end{equation}
The analogous family of maximizers for any other fixed $v\in\mathbb{R}^3, |v|=1$ can be immediately deduced by the rotational symmetry of $\mathcal{L}^2_{st}$,
Proposition \ref{RotInvar}.
\end{theorem}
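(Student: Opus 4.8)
\emph{Proof plan.} By the rotational invariance of $\mathcal{L}^2_{st}$ (Proposition \ref{RotInvar}) we may assume $v=e_3$. Indeed, for a general unit vector $v$ choose $Q\in SO(3)$ with $Qv=e_3$; then $T\mapsto T^{Q^{tr}}$, with $T^{Q^{tr}}(x)=QT(Q^{tr}x)Q^{tr}$, is a linear isometry of $\mathcal{L}^2$ that maps $\mathcal{L}^2_{st}$ bijectively onto itself (and therefore commutes with $P_{st}$), sends $\tfrac{\lambda}{\sqrt6}(I_3-3v\otimes v)$ to $\tfrac{\lambda(Q^{tr}\,\cdot)}{\sqrt6}(I_3-3e_3\otimes e_3)$, and preserves the $L^2$ norm of $\lambda$. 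So fix $M=\tfrac{\lambda}{\sqrt6}\bigl(I_3-3e_3\otimes e_3\bigr)$, a real diagonal symmetric-matrix field with $\widehat M(\xi)=\tfrac{\widehat\lambda(\xi)}{\sqrt6}\operatorname{diag}(1,1,-2)$; since $|I_3-3e_3\otimes e_3|^2=6$ we have $\|M\|_{L^2}^2=\|\lambda\|_{L^2}^2$.

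The heart of the argument is to compute $\|P_{st}M\|_{L^2}^2$ on the Fourier side, just as in the proof of Theorem \ref{HelmholtzOne}: $P_{st}$ acts pointwise by $\widehat{P_{st}M}(\xi)=P_{st}\widehat M(\xi)$, and by Proposition \ref{FourierConstraint} the latter is obtained from $\bigl[\widehat M(\xi)\bigr]_{\mathcal{B}(\xi)}$ by keeping only the two (mutually orthogonal, Frobenius-unit) strain slots, namely the entries coupling $\xi/|\xi|$ with $\spn\{\eta,\mu\}$. Using that $\widehat M(\xi)$ is complex symmetric and that $\{\xi/|\xi|,\eta,\mu\}$ is orthonormal, this yields
\begin{equation}
\bigl\|P_{st}\widehat M(\xi)\bigr\|^2
=2\Bigl(\bigl|\widehat M(\xi)\tfrac{\xi}{|\xi|}\bigr|^2
-\bigl|\tfrac{\xi}{|\xi|}\cdot\widehat M(\xi)\tfrac{\xi}{|\xi|}\bigr|^2\Bigr),
\end{equation}
the factor $2$ coming from the symmetry of the two off-diagonal slots. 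Plugging in $\widehat M(\xi)=\tfrac{\widehat\lambda(\xi)}{\sqrt6}\operatorname{diag}(1,1,-2)$ and writing $r=r(\xi)=\xi_3^2/|\xi|^2\in[0,1]$, the bracket simplifies via $(1+3r)-(1-3r)^2=9r(1-r)$, and after Plancherel one gets
\begin{equation}
\|P_{st}M\|_{L^2}^2
=3\int_{\mathbb{R}^3}|\widehat\lambda(\xi)|^2\,
\frac{\xi_3^2}{|\xi|^2}\Bigl(1-\frac{\xi_3^2}{|\xi|^2}\Bigr)\diff\xi .
\end{equation}

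All three assertions follow from this identity. Since $0\le r(1-r)\le\tfrac14$ on $[0,1]$, we get $\|P_{st}M\|_{L^2}^2\le\tfrac34\|\lambda\|_{L^2}^2$, which is the stated bound when $\|\lambda\|_{L^2}=1$; this upper bound can also be obtained directly from $\|P_{st}M\|^2=\langle M,P_{st}M\rangle$, the trace-free constraint on $P_{st}M$, Cauchy--Schwarz, and Theorem \ref{OneDiagCompBound}. If $\lambda$ is not identically zero and $M\in\mathcal{L}^2_{st}$, then $P_{st}M=M$, so $\|\lambda\|_{L^2}^2=\|M\|_{L^2}^2=3\int|\widehat\lambda|^2 r(1-r)$; but $1-3r(1-r)\ge\tfrac14>0$ for every $r\in[0,1]$, which forces $\widehat\lambda\equiv0$ and hence $\lambda\equiv0$, a contradiction --- this proves the non-existence statement. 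Finally, for sharpness and the explicit near-maximizer family at $v=e_3$, take any real $\lambda$ with $\|\lambda\|_{L^2}=1$ and $\supp\widehat\lambda\subset\bigl\{\xi:\ \tfrac12-\tfrac\epsilon2<\xi_3^2/|\xi|^2<\tfrac12+\tfrac\epsilon2\bigr\}$; this cone-shell has positive measure and is symmetric under $\xi\mapsto-\xi$ (so a real such $\lambda$ exists), and on it $r(1-r)>\tfrac{1-\epsilon}{2}\cdot\tfrac{1+\epsilon}{2}\ge\tfrac{(1-\epsilon)^2}{4}$, whence $\|P_{st}M\|_{L^2}^2>\tfrac34(1-\epsilon)^2$; letting $\epsilon\downarrow0$ shows the supremum is exactly $\tfrac34$. (Alternatively, the strain matrices constructed in Theorem \ref{OneDiagCompSharp} can be used to witness near-maximality.)

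The step I expect to require the most care is the Fourier computation itself --- specifically, verifying that $P_{st}\widehat M(\xi)$ is exactly the orthogonal projection of $\bigl[\widehat M(\xi)\bigr]_{\mathcal{B}(\xi)}$ onto the two strain directions identified in Proposition \ref{FourierConstraint} (with the correct factor of $2$ from the off-diagonal symmetry), and that substituting the diagonal multiplier collapses the pointwise weight to a function of $\xi_3^2/|\xi|^2$ alone. Once this is in hand, the remaining maximization $\sup_{0\le r\le1}r(1-r)=\tfrac14$ is trivial, and the sharp constant $\tfrac34$, the non-existence, and the near-maximizers all follow immediately.
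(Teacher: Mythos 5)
Your proof is correct in its main thrust and in fact takes a more direct route than the paper for the upper bound. The paper obtains $\le\frac34$ by duality: it pairs $\frac{\lambda}{\sqrt6}(I_3-3v\otimes v)$ against a unit $S\in\mathcal{L}^2_{st}$, reduces to $\langle\lambda,S\odot(v\otimes v)\rangle$, and invokes Cauchy--Schwarz together with the inequality $\|S\odot(v\otimes v)\|_{L^2}^2\le\frac12\|S\|_{L^2}^2$ of Theorem \ref{OneDiagCompBound}; it then separately exhibits an explicit $S\in\mathcal{L}^2_{st}$ with Fourier support in the thin cone to show near-maximality. You instead compute $\|P_{st}\widehat M(\xi)\|^2$ pointwise from the Fourier-side projection of Proposition \ref{FourierConstraint} and arrive at the exact identity
\begin{equation*}
\|P_{st}M\|_{L^2}^2=3\int_{\mathbb{R}^3}|\widehat\lambda(\xi)|^2\,\frac{\xi_3^2}{|\xi|^2}\Bigl(1-\frac{\xi_3^2}{|\xi|^2}\Bigr)\diff\xi,
\end{equation*}
from which the bound $\le\frac34$, the non-existence, and the near-maximizer estimate all drop out at once. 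Your algebra checks: $(1+3r)-(1-3r)^2=9r(1-r)$, and $\frac{2}{6}\cdot 9=3$. This is a genuine simplification and the identity is the cleanest possible statement of what is going on.

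There is one gap. The supremum in the theorem is taken over $\lambda\geq 0$ as well as $\|\lambda\|_{L^2}=1$. Your near-maximizer family requires only that $\widehat\lambda$ be supported on the thin cone $\{\frac12-\frac\epsilon2<\xi_3^2/|\xi|^2<\frac12+\frac\epsilon2\}$; you arrange for $\lambda$ to be real by exploiting the $\xi\mapsto-\xi$ symmetry of the cone, but you never arrange $\lambda\geq 0$. So your argument proves $\sup_{\|\lambda\|_{L^2}=1}\|P_{st}M\|_{L^2}^2=\frac34$ over \emph{all} unit $\lambda$, but it does not by itself establish that the supremum over \emph{non-negative} $\lambda$ equals $\frac34$ rather than something smaller --- a priori, positivity of $\lambda$ in physical space could obstruct concentrating $|\widehat\lambda|^2$ on the cone. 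The paper closes this by a separate Gaussian construction: it takes $\widehat\lambda_n$ to be an anisotropic Gaussian concentrating on the line through $\frac1{\sqrt2}(0,1,1)$ (which sits on $\xi_3^2/|\xi|^2=\frac12$), and uses that the inverse Fourier transform of a Gaussian is again Gaussian, hence strictly positive, so $\lambda_n>0$. You should append an analogous positivity argument; once your exact formula is in hand, the Gaussian family plugs in directly and the limit follows from dominated convergence (or simply from continuity of $r(1-r)$ at $r=\frac12$).

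Two minor remarks: the stated non-existence clause has $(I_3-v\otimes v)$, which has trace $2\neq 0$ and is therefore ruled out trivially by the trace-free constraint; as in your reading, this is presumably a typo for $(I_3-3v\otimes v)$, and your argument treats the intended case. Also, your rotational-reduction step using $T\mapsto T^{Q^{tr}}$ is the inverse of the map in Proposition \ref{RotInvar}, but since that map is a unitary bijection of $\mathcal{L}^2_{st}$ (and hence commutes with $P_{st}$), this is fine.
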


\begin{proof}
First we will prove that for all $\lambda\in L^2,
\|\lambda\|_{L^2}=1,$ 
\begin{equation}
    \left\|P_{st}\left(\frac{\lambda}{\sqrt{6}}
    \left(I_3-3v \otimes v\right)\right)
    \right\|_{L^2}^2\leq \frac{3}{4}.
\end{equation}
Applying Corollary \ref{DistrubtionBody} and Theorem \ref{OneDiagCompBound},
we can see that for all $S\in \mathcal{L}^2_{st}, 
\|S\|_{L^2}=1$
\begin{align}
    \left< \frac{\lambda}{\sqrt{6}}
    \left(I_3-3v \otimes v\right); S\right>
    &=
    -\frac{3}{\sqrt{6}}
    \left< \lambda v \otimes v; S\right> \\
    &=
    -\frac{\sqrt{3}}{\sqrt{2}}
    \left< \lambda; S\odot (v \otimes v)\right> \\
    &\leq
    \frac{\sqrt{3}}{\sqrt{2}}\|\lambda\|_{L^2}
    \|S\cdot (v\otimes v)\|_{L^2} \\
    &\leq
    \frac{\sqrt{3}}{2}\|S\|_{L^2} \\
    &=
    \frac{3}{\sqrt{2}}.
\end{align}
Using the dual space characterization of the $L^2$ norm,
we can see that
\begin{align}
    \left\|P_{st}\left(\frac{\lambda}{\sqrt{6}}
    \left(I_3-3v \otimes v\right)\right)
    \right\|_{L^2}
    &=
    \sup_{\substack{S\in \mathcal{L}^2_{st} \\
    \|S\|_{L^2}=1}} \left<\left(\frac{\lambda}
    {\sqrt{6}}\left(I_3-3v \otimes v\right)\right);S\right> \\
    &\leq
    \frac{\sqrt{3}}{2}.
\end{align}
Therefore, we can clearly see that for all $\|\lambda\|_{L^2}=1,$ $v\in\mathbb{R}^3$ fixed, $|v|=1$,
\begin{equation}
    \left\|P_{st}\left(\frac{\lambda}{\sqrt{6}}
    \left(I_3-3v \otimes v\right)\right)
    \right\|_{L^2}^2\leq \frac{3}{4},
\end{equation}
which implies that
\begin{equation}
    \sup_{\substack{\|\lambda\|_{L^2}=1 \\
    \lambda \geq 0}}
    \left\|P_{st}\left(\frac{\lambda}{\sqrt{6}}
    \left(I_3-3v \otimes v\right)\right)
    \right\|_{L^2}^2 \leq \frac{3}{4}.
\end{equation}

Now we will show this inequality holds with equality for all $v\in\mathbb{R}^3, |v|=1$.
Using the rotational invariance of the space $\mathcal{L}^2_{st}$ from Proposition \ref{RotInvarIntro}, we can take $v=e_3$ without loss of generality.
Fix $0<\epsilon<1,$ and fix  $\lambda\in L^2$ such that $\lambda\geq 0$, $\|\lambda\|_{L^2}=1$, and
$\supp\left(\hat{\lambda}\right) \in 
\left\{\xi\in\mathbb{R}^3:
\frac{1}{2}-\frac{\epsilon}{2}
<\frac{\xi_3^2}{|\xi|^2}
<\frac{1}{2}+\frac{\epsilon}{2}
\right\}$.
Let 
\begin{equation}
\hat{S}(\xi)=\frac{1}{\sqrt{2}} \hat{\lambda}(\xi)
\sgn\left(\frac{\xi_3}{|\xi|}\right)
\left(\frac{\xi}{|\xi|}\otimes w(\xi)
+w(\xi) \otimes \frac{\xi}{|\xi|}\right),    
\end{equation}
where
\begin{equation}
    w(\xi)=\frac{1}{|\xi|\left(|\xi|^2-\xi_3^2
    \right)^\frac{1}{2}}\left(\xi_3\xi-|\xi|^2e_3\right).
\end{equation}
Note that 
\begin{equation}
    \xi \cdot w(\xi)=0,
\end{equation}
so by Proposition \ref{FourierConstraint}, $S\in\mathcal{L}^2$.
Furthermore, note that 
\begin{align}
    |w(\xi)|^2  &=
    \frac{1}{|\xi|^2\left(|\xi|^2-\xi_3^2\right)}
    (|\xi|^4- \xi_3^2|\xi|^2) \\
    &= 1,
\end{align}
and so we can conclude that $\|S\|_{L^2}=1$.
Next we will take the inner product, using Plancherel's formula to find that
\begin{align}
        \left<\frac{\lambda}{\sqrt{6}}
        \left(I_3- 3 e_3 \otimes e_3\right),S\right>
        &=
        -\frac{\sqrt{3}}{\sqrt{2}}
        \left<\lambda,S_{33}\right> \\
        &=
        -\frac{\sqrt{3}}{\sqrt{2}}
        \left<\hat{\lambda},\hat{S}_{33}\right> \\
        &=
        \sqrt{3} \int_{\mathbb{R}^3} 
        |\hat{\lambda}(\xi)|^2
        \sgn\left(\frac{\xi_3}{|\xi|}\right)
        \frac{\xi_3}{|\xi|}
        \frac{\sqrt{|\xi|^2-\xi_3^2}}{|\xi|}
        \diff\xi. \\
        &=
        \sqrt{3} \int_{\mathbb{R}^3} 
        |\hat{\lambda}(\xi)|^2
        \left(
        \frac{\xi_3^2}{|\xi|^2}
        \frac{|\xi|^2-\xi_3^2}{|\xi|^2}
        \right)^\frac{1}{2} \diff\xi .
\end{align}
Applying the condition that 
for all $\xi\in\supp\left(\hat{\lambda}\right),$
\begin{equation}
    \frac{1}{2}-\frac{\epsilon}{2}
    <\frac{\xi_3^2}{|\xi|^2}
    <\frac{1}{2}+\frac{\epsilon}{2},
\end{equation}
we find that
\begin{align}
    \left<\frac{\lambda}{\sqrt{6}}
    \left(I_3- 3 e_3 \otimes e_3\right),S\right>
    &=
    \sqrt{3} \int_{\mathbb{R}^3} 
    |\hat{\lambda}(\xi)|^2
    \left(
    \frac{\xi_3^2}{|\xi|^2}
    \left(1-\frac{\xi_3^2}{|\xi|^2}\right)
    \right)^\frac{1}{2} \diff\xi \\
    &>
    \sqrt{3}\left(\frac{1}{2}-\frac{\epsilon}{2}\right) \|\lambda\|_{L^2}^2 \\
    &=
    \frac{\sqrt{3}}{2}(1-\epsilon).
\end{align}
Recalling that $S\in\mathcal{L}^2_{st}, \|S\|_{L^2}=1,$
we find that 
\begin{equation}
    \left\|P_{st}\left(\frac{\lambda}{\sqrt{6}}
    \left(I_3-3 e_3\otimes e_3\right)\right)
    \right\|_{L^2}
    >\frac{\sqrt{3}}{2}(1-\epsilon).
\end{equation}
Therefore we can conclude that for all $0<\epsilon<1,$ 
there exists 
$\lambda\in L^2, \lambda\geq 0, \|\lambda\|_{L^2}=1$
such that
\begin{equation}
    \left\|P_{st}\left(\frac{\lambda}{\sqrt{6}}
    \left(I_3-3 e_3\otimes e_3\right)\right)\right\|_{L^2}
    >\frac{3}{4}(1-\epsilon)^2.
\end{equation}

While we have shown that the family of near maximizers gets arbitrarily close to $\frac{3}{4}$, 
the stipulation that $\lambda\geq 0$ is not addressed by that condition. We will now give an example of a sequence satisfying the condition that $\lambda\geq 0$ that comes arbitrarily close to $\frac{3}{4}$.
Let
\begin{align}
    v_1&=
    \left(\begin{array}{c}
         1\\
         0 \\
         0 
    \end{array}\right) \\
    v_2&=\frac{1}{\sqrt{2}}
    \left(\begin{array}{c}
         0\\
         -1 \\
         1 
    \end{array}\right) \\
     v_3&=\frac{1}{\sqrt{2}}
    \left(\begin{array}{c}
         0\\
         1 \\
         1 
    \end{array}\right).
\end{align}
Now we define $\lambda$ by letting its Fourier transform be given by
\begin{equation}
    \hat{\lambda}_n(\xi)=a_n
    \exp\left(-n(v_1\cdot \xi)^2-n(v_2\cdot\xi)^2
    -\frac{1}{n}(v_3\cdot\xi)^2\right),
\end{equation}
where $a_n$ is chosen such that
\begin{equation}
    \|\lambda_n\|_{L^2}=
    \|\hat{\lambda}_n\|_{L^2}
    =1.
\end{equation}
We can see that $\hat{\lambda}_n$ is a Gaussian function, and the Fourier transform preserves the set of Gaussian functions, and so we can conclude that $\lambda_n$ is Gaussian and that for all $x \in\mathbb{R}^3, \lambda_n(x)>0.$
Finally, we can see that in the limit 
$n\to\infty$ the support of $\hat{\lambda}_n$ concentrates on the set $\spn\left\{v_3\right\}$. Noting that
\begin{equation}
    \spn\left\{v_3\right\}\subset
    \left\{\xi\in\mathbb{R}^3: 
    \frac{\xi_3^2}{|\xi|^2}
    =\frac{1}{2}\right\},
\end{equation}
we can conclude that
\begin{equation}
    \lim_{n\to\infty}
    \left\|P_{st}\left(\frac{\lambda}{\sqrt{6}}
    \left(I_3-3e_3\otimes e_3\right)
    \right)\right\|_{L^2}^2
    =\frac{3}{4},
\end{equation}
and this completes the proof.
\end{proof}

We will now show that any nontrivial matrix in $\mathcal{L}^2_{maxmid}\cap \mathcal{L}^2_{st}$ must stretch in all three directions, proving Theorem \ref{TwoDirectionCaseIntro}.

\begin{theorem} \label{TwoDirectionCase}
Suppose that $|v(x)|=1$ and $v_1(x)=0$ almost everywhere $x\in\mathbb{R}^3$. That is, suppose $v$ is a unit vector varying in space, but restricted to the $yz$ plane.
Then there does not exist $\lambda\in L^2, \lambda\geq 0,$ and $\lambda$ not identically zero, such that
\begin{equation}
    \lambda(I_3-3v\otimes v)\in \mathcal{L}^2_{st}.
\end{equation}
This means that any nontrivial maxmid matrix in the strain constraint space must compress in all three directions. The axis of compression cannot be confined to any fixed plane.
\end{theorem}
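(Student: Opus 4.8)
The plan is to push the hypothesis $v_1 = 0$ through to the first row and column of the matrix, where the entries become trivial, and then read off what the Fourier-side characterization of $\mathcal{L}^2_{st}$ forces. Write $S = \lambda\left(I_3 - 3v\otimes v\right)$ and suppose $S \in \mathcal{L}^2_{st}$. Since $v_1(x) = 0$ almost everywhere, $\left(v\otimes v\right)_{1j} = v_1 v_j = 0$, so the first row and column of $S$ are $(\lambda, 0, 0)$; that is, $S_{11} = \lambda$ and $S_{12} = S_{13} = 0$. Passing to Fourier space, Proposition \ref{FourierConstraint} (equivalently, writing the constraint $S + 2\nabla_{sym}\divr(-\Delta)^{-1}S = 0$ of Theorem \ref{CharacterizationARMA} as a homogeneous degree-zero multiplier) says that $S \in \mathcal{L}^2_{st}$ is equivalent to
\[
    \widehat{S}(\xi) = \frac{1}{|\xi|^2}\left(\xi \otimes p(\xi) + p(\xi)\otimes\xi\right), \qquad p(\xi) := \widehat{S}(\xi)\,\xi, \qquad p(\xi)\cdot\xi = 0,
\]
for almost every $\xi \in \mathbb{R}^3$. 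Reading off the $(1,1)$, $(1,2)$ and $(1,3)$ entries and matching them against $\widehat{S_{11}} = \widehat{\lambda}$, $\widehat{S_{12}} = \widehat{S_{13}} = 0$ gives, for almost every $\xi$,
\[
    \widehat{\lambda}(\xi) = \frac{2\xi_1 p_1(\xi)}{|\xi|^2}, \qquad \xi_1 p_2(\xi) + p_1(\xi)\xi_2 = 0, \qquad \xi_1 p_3(\xi) + p_1(\xi)\xi_3 = 0.
\]

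The key step is then to show $p_1(\xi) = 0$ for almost every $\xi$. If $\xi_1 = 0$, the last two relations force $p_1(\xi)(\xi_2, \xi_3) = 0$, and since $(\xi_2, \xi_3) \neq 0$ this gives $p_1(\xi) = 0$. If $\xi_1 \neq 0$, solving the last two relations for $p_2(\xi), p_3(\xi)$ and substituting into $p(\xi)\cdot\xi = 0$ yields $p_1(\xi)\left(\xi_1^2 - \xi_2^2 - \xi_3^2\right) = 0$; since the cone $\{\xi_1^2 = \xi_2^2 + \xi_3^2\}$ has Lebesgue measure zero, $p_1(\xi) = 0$ for almost every such $\xi$ as well. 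In every case $\widehat{\lambda}(\xi) = 2\xi_1 p_1(\xi)/|\xi|^2 = 0$ almost everywhere, hence $\lambda = 0$ in $L^2$; note that this argument never uses the sign condition $\lambda \geq 0$. The statement for an arbitrary fixed plane then follows from the rotational invariance of $\mathcal{L}^2_{st}$ in Proposition \ref{RotInvar}, conjugating by a rotation carrying the normal of the plane to $e_1$.

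I do not expect a genuinely hard step. The two things to be careful about are that the \emph{full} strain constraint is needed here --- the weaker conditions $\tr(S) = 0$, $\divr(S) = 0$, or even $\divr^2(S) = 0$ are not enough to force $p_1 \equiv 0$ (this is precisely why the direct estimate from Theorem \ref{OneDiagCompBound} applied to the direction $e_1$, which only yields $\|\lambda\|_{L^2}^2 \le 3\|\lambda\|_{L^2}^2$, is too weak) --- and that the exceptional set $\{\xi_1^2 = \xi_2^2 + \xi_3^2\}$ is correctly recognized as null and discarded.
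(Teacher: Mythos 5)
Your proof is correct, and it takes a genuinely different route from the paper's. The paper argues in physical space: writing $S=\nabla_{sym}u$ with $u\in\mathbf{\dot{H}}^1_{df}$, the $(1,1)$ entry gives $\partial_1 u_1=\lambda$; since $\lambda\geq 0$ and is not identically zero, $u_1$ is nondecreasing along $x_1$-lines and on a positive-measure set of lines its total increase $\int\lambda\,\diff\tau$ is strictly positive, so $u_1$ cannot decay at infinity, contradicting $u\in\mathbf{\dot{H}}^1_{df}\subset\mathbf{L}^6$. Your argument instead works entirely on the Fourier side: you use the representation
\begin{equation*}
\widehat{S}(\xi)=\frac{1}{|\xi|^2}\bigl(\xi\otimes p(\xi)+p(\xi)\otimes\xi\bigr),\qquad p(\xi)\cdot\xi=0,
\end{equation*}
which is indeed equivalent to the span condition in Proposition \ref{FourierConstraint}, and close a short linear system from the $(1,2)$, $(1,3)$ entries together with $p\cdot\xi=0$ to get $p_1=0$ a.e.\ (discarding the null cone $\xi_1^2=\xi_2^2+\xi_3^2$), whence $\widehat{\lambda}=0$.

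The two proofs trade different pieces of the hypothesis, which is worth noticing. The paper's proof uses only $S_{11}=\lambda$ but relies crucially on the sign $\lambda\geq 0$ to get monotonicity and hence well-defined line limits; making the resulting ``$u_1$ does not decay'' contradiction rigorous against $u\in\mathbf{L}^6$ requires a little care. Your proof uses the vanishing of the off-diagonal entries $S_{12}=S_{13}=0$ but never invokes the sign of $\lambda$, so it actually establishes the slightly stronger statement that no nonzero $\lambda\in L^2$ of either sign can work; it also avoids the decay-at-infinity subtleties entirely and makes transparent which part of the Riesz-transform constraint is responsible for the rigidity. Your aside about Theorem \ref{OneDiagCompBound} being too weak in the $e_1$ direction is also correct: it only gives $\|\lambda\|_{L^2}^2\leq 3\|\lambda\|_{L^2}^2$, which carries no information, so the full multiplier structure (not just a diagonal projection bound) is genuinely needed here.
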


\begin{proof}
Suppose towards contradiction that there exists
$\lambda\in L^2, \lambda\geq 0,$ and $\lambda$ not identically zero,
and $v\in \mathbf{L}^\infty$ with $|v(x)|=1$ and $v_1(x)=0$ almost everywhere $x\in\mathbb{R}^3$,
such that
\begin{equation}
    \lambda(I_3-3v\otimes v)\in \mathcal{L}^2_{st}.
\end{equation}
Then there exists $u\in\mathbf{\dot{H}}^1_{df}$, such that
\begin{equation}
     \lambda(I_3-3v\otimes v)=\nabla_{sym}u.
\end{equation}
Taking the $(1,1)$ entry of each of these matrices, we find that
\begin{equation}
    \partial_1 u_1=\lambda.
\end{equation}
We know by hypothesis that $\lambda$ is not identically zero---that is $\lambda>0$ on a set of positive measure---and so there must exist a set of positive measure $\Omega\subset \mathbb{R}^2$, such that
\begin{equation}
    \int_{-\infty}^{+\infty} \lambda(\tau,x_2,x_3)
    \diff\tau>0,
\end{equation}
for all $(x_2,x_3)\in\Omega$.
Using the fact that $\partial_1u_1=\lambda$,
we can conclude that for all
$(x_2,x_3)\in\Omega$,
\begin{align}
    \lim_{b\to+\infty} u_1(b,x_2,x_3)
    -\lim_{a\to-\infty}u_1(a,x_2,x_3)
    &=\int_{-\infty}^{+\infty}
    \lambda(\tau,x_2,x_3) \diff\tau \\
    &>0.
\end{align}
This implies that either there exists a set of positive measure, $\Omega^+$, such that for all $(x_2,x_3)\in\Omega^+$,
\begin{equation}
    \lim_{b\to+\infty} u_1(b,x_2,x_3)>0,
\end{equation}
or there exists a set of positive measure, $\Omega^-$, such that for all $(x_2,x_3)\in\Omega^-$,
\begin{equation}
    \lim_{a\to -\infty} u_1(a,x_2,x_3)<0.
\end{equation}
Both of these possibilities contradict the assumption that
$u\in \mathbf{\dot{H}}^1 \subset\mathbf{L}^6$,
and this completes the proof.
\end{proof}

We will now prove Theorem \ref{EigenGapIntro}, showing that a gap in the projection of maxmid matrices onto the space $\mathcal{L}^2_{st}$ implies a gap between the largest and intermediate eigenvalues of any strain matrix.

\begin{theorem} \label{EigenGap}
Suppose
\begin{equation} \label{SupHypothesis}
    \sup_{\substack{\|\lambda\|_{L^2}=1 \\ 
    \lambda\geq 0\\
    |v(x)|=1
    }}
    \left\|P_{st}\left(\frac{\lambda}{\sqrt{6}}
    \left(I_3-3v \otimes v\right)\right)
    \right\|_{L^2}^2=r^2<1.
\end{equation}
Then for all $S\in\mathcal{L}^2_{st}$,
\begin{equation}
    \|\lambda_3-\lambda_2^+\|_{L^2}\geq 
    \frac{1-r}{\sqrt{2}} \|S\|_{L^2},
\end{equation}
where $\lambda_1(x)\leq \lambda_2(s)\leq \lambda_3(x)$
are the eigenvalues of $S(x)$ 
and $\lambda_2^+=\max(0,\lambda_2)$.
\end{theorem}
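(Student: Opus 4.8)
The plan is to split $S$ pointwise as $S(x)=N(x)+E(x)$, where $N(x)$ is the max-mid matrix obtained by replacing $\lambda_2(x)$ and $\lambda_3(x)$ by their average and $E(x)$ is the traceless remainder living in the span of the top two eigendirections; $N$ will then be of the admissible form for the hypothesis \eqref{SupHypothesis}, while $|E(x)|$ is comparable to $\lambda_3(x)-\lambda_2^+(x)$. Concretely, for almost every $x$ pick a measurable orthonormal eigenbasis $w_1(x),w_2(x),w_3(x)$ of $S(x)$ with $S(x)w_i=\lambda_i w_i$ and $\lambda_1\le\lambda_2\le\lambda_3$; since $\tr S=0$ we have $\lambda_1\le 0\le\lambda_3$ and $\lambda_1=-(\lambda_2+\lambda_3)$. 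Set
\[
  N=\lambda_1\, w_1\otimes w_1+\tfrac{\lambda_2+\lambda_3}{2}\bigl(w_2\otimes w_2+w_3\otimes w_3\bigr)=\frac{\mu}{\sqrt 6}\bigl(I_3-3\,w_1\otimes w_1\bigr),\qquad \mu:=-\tfrac{\sqrt 6}{2}\lambda_1\ge 0,
\]
and $E:=S-N=\tfrac{\lambda_3-\lambda_2}{2}\bigl(w_3\otimes w_3-w_2\otimes w_2\bigr)$. A one-line computation shows $N(x)$ and $E(x)$ are orthogonal, so $|N(x)|^2+|E(x)|^2=|S(x)|^2$; in particular $\mu\in L^2$ with $\|\mu\|_{L^2}=\|N\|_{L^2}\le\|S\|_{L^2}$, $E\in\mathcal{L}^2$, and $|E(x)|^2=\tfrac12\bigl(\lambda_3(x)-\lambda_2(x)\bigr)^2$.

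Next I would run the duality argument. Since $S\in\mathcal{L}^2_{st}$ and $P_{st}$ is the orthogonal projection onto $\mathcal{L}^2_{st}$, we have $\langle S,N\rangle=\langle S,P_{st}(N)\rangle\le\|S\|_{L^2}\|P_{st}(N)\|_{L^2}$, and because $N$ is exactly of the form $\tfrac{\mu}{\sqrt 6}(I_3-3\,w_1\otimes w_1)$ with $\mu\ge 0$ and $|w_1|=1$, the hypothesis \eqref{SupHypothesis} (rescaled from $\|\mu\|_{L^2}=1$) gives $\|P_{st}(N)\|_{L^2}\le r\|N\|_{L^2}$. Expanding and using Cauchy--Schwarz on the other term,
\[
  \|S\|_{L^2}^2=\langle S,N\rangle+\langle S,E\rangle\le r\|S\|_{L^2}\|N\|_{L^2}+\|S\|_{L^2}\|E\|_{L^2}\le r\|S\|_{L^2}^2+\|S\|_{L^2}\|E\|_{L^2},
\]
so (for $S\neq 0$) $(1-r)\|S\|_{L^2}\le\|E\|_{L^2}$. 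It remains to control $\|E\|_{L^2}$ by $\|\lambda_3-\lambda_2^+\|_{L^2}$. Pointwise I claim $(\lambda_3-\lambda_2)^2\le 4(\lambda_3-\lambda_2^+)^2$: this is immediate when $\lambda_2\ge 0$ since then $\lambda_2^+=\lambda_2$, and when $\lambda_2<0$ one has $\lambda_2^+=0$ and $0\le\lambda_3-\lambda_2=\lambda_3+|\lambda_2|\le 2\lambda_3$, using the trace-free identity $\lambda_3=|\lambda_1|+|\lambda_2|\ge|\lambda_2|$, whence $(\lambda_3-\lambda_2)^2\le 4\lambda_3^2=4(\lambda_3-\lambda_2^+)^2$. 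Integrating, $\|E\|_{L^2}^2=\tfrac12\int(\lambda_3-\lambda_2)^2\le 2\|\lambda_3-\lambda_2^+\|_{L^2}^2$, so $\|E\|_{L^2}\le\sqrt 2\,\|\lambda_3-\lambda_2^+\|_{L^2}$, and combining with the previous inequality gives the claim.

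There is no deep obstacle here; the steps requiring a little care are the measurable selection of the eigenbasis (standard, with the degenerate locus $\lambda_1=\lambda_2=\lambda_3$ being precisely where $S=0$, so any choice works there) and checking that the constructed $N$ genuinely lies in the family appearing in \eqref{SupHypothesis}, which is exactly the content of the pointwise orthogonality $|N|^2+|E|^2=|S|^2$. The passage from $\lambda_2$ to $\lambda_2^+$, and the resulting factor $\tfrac{1}{\sqrt 2}$ rather than $\sqrt 2$ in the final bound, is entirely an artifact of the constant $4$ in the elementary inequality $(\lambda_3-\lambda_2)^2\le 4(\lambda_3-\lambda_2^+)^2$, which in turn comes from the trace-free constraint forcing $\lambda_3\ge|\lambda_2|$.
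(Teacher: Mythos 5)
Your proof is correct, and it follows the same overall strategy as the paper (split $S$ into an admissible max-mid piece plus a remainder controlled by the eigenvalue gap, then apply the hypothesis to the max-mid piece), but the execution is genuinely different in two respects. First, the splitting: the paper writes $S=\lambda_2^+\left(I_3-3v_1\otimes v_1\right)+\tilde S$ with $\tilde S$ defined piecewise according to the sign of $\lambda_2$, whereas you take $N=-\tfrac{\lambda_1}{2}\left(I_3-3w_1\otimes w_1\right)$ (averaging the top two eigenvalues) and $E=S-N$, which is a single formula valid everywhere and, crucially, is \emph{pointwise orthogonal}, so that $|N|^2+|E|^2=|S|^2$ and $\|E\|_{L^2}^2=\tfrac12\|\lambda_3-\lambda_2\|_{L^2}^2$ exactly. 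Second, the way the hypothesis enters: the paper applies $P_{st}$ to both summands and uses the triangle inequality, while you use duality, $\|S\|^2=\langle S,N\rangle+\langle S,E\rangle\le r\|S\|\|N\|+\|S\|\|E\|$, together with $\|N\|\le\|S\|$ from the pointwise orthogonality. Both routes then invoke the same elementary consequence of the trace-free constraint ($\lambda_3\ge|\lambda_2|$ when $\lambda_2\le 0$) to convert $\lambda_3-\lambda_2$ into $\lambda_3-\lambda_2^+$ at the cost of a factor of $2$, and both land on the constant $\tfrac{1-r}{\sqrt2}$. Your version is arguably cleaner: the orthogonal splitting avoids the paper's case analysis in defining $\tilde S$ and the auxiliary estimate $\sqrt6\,\|\lambda_2^+\|_{L^2}\le\|S\|_{L^2}$, and your exact identity $\|E\|_{L^2}=\tfrac{1}{\sqrt2}\|\lambda_3-\lambda_2\|_{L^2}$ would in fact yield the stronger bound $\|\lambda_3-\lambda_2\|_{L^2}\ge\sqrt2\,(1-r)\|S\|_{L^2}$ in the subsequent corollary. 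Your attention to the measurable eigenbasis selection and to verifying that $N$ really lies in the admissible family of \eqref{SupHypothesis} is appropriate and handled correctly.
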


\begin{proof}
Fix $S\in\mathcal{L}^2_{st}$. Then we know that 
\begin{equation}
    S=
    \lambda_1 v_1\otimes v_1
    + \lambda_2 v_2\otimes v_2
    +\lambda_3 v_3 \otimes v_3,
\end{equation}
where $v_1,v_2,v_3$ are the normalized eigenvectors associated with the eigenvalues $\lambda_1\leq\lambda_2\leq\lambda_3$ of $S$.
We will define $\Tilde{S}$ by
\begin{equation}
    \tilde{S}(x)=
    \begin{cases}
    (\lambda_3(x)-\lambda_2(x))
    (v_3(x)\otimes v_3(x)-
    v_1(x)\otimes v_1(x)),
    & \lambda_2(x)>0 \\
    S(x), 
    &\lambda_2(x)\leq 0
    \end{cases}.
\end{equation}
Now we will observe that
\begin{equation}
    S=\lambda_2^+\left(I_3-3v_1\otimes v_1\right)
    +\tilde{S}.
\end{equation}
This is obvious when $\lambda_2\leq 0.$
When $\lambda_2>0,$ we can see that
\begin{align}
    \lambda_2^+\left(I_3-3v_1\otimes v_1\right)
    +\tilde{S}
    &=
    \lambda_2\left(I_3-3 v_1\otimes v_1\right)
    +(\lambda_3-\lambda_2)
    (v_3\otimes v_3-v_1\otimes v_1) \\
    &=
    (-\lambda_2-\lambda_3)v_1\otimes v_1
    +\lambda_2 v_2\otimes v_2
    +\lambda_3 v_3 \otimes v_3 \\
    &=
    \lambda_1 v_1\otimes v_1
    +\lambda_2 v_2\otimes v_2
    +\lambda_3 v_3 \otimes v_3 \\
    &=
    S,
\end{align}
using the fact that 
$\tr(S)=\lambda_1+\lambda_2+\lambda_3=0$
and $I_3=v_1\otimes v_1+v_2\otimes v_2+v_3\otimes v_3$.

Next we will show that 
\begin{equation}
    \left\|\tilde{S}\right\|_{L^2}\leq \sqrt{2}
    \left\|\lambda_3-\lambda_2^+\right\|_{L^2}.
\end{equation}
When $\lambda_2(x)>0,$ then
$\tilde{S}(x)=(\lambda_3(x)-\lambda_2(x))
(v_3(x)\otimes v_3(x)-v_1(x)\otimes v_1(x))$,
and so we can conclude that
\begin{equation}
    \left|\tilde{S}(x)\right|^2=
    2\left(\lambda_3(x)-\lambda_2^+(x)
    \right)^2.
\end{equation}
When $\lambda_2(x)\leq 0,$ then $\tilde{S}(x)=S(x)$
and $\lambda_2^+(x)=0$.
We know that $\lambda_1 \leq \lambda_2 \leq 0,$ 
and so consequently $\lambda_1\lambda_2\geq 0$.
Therefore we may conclude that 
\begin{align}
    \lambda_3^2
    &=
    (-\lambda_1-\lambda_2)^2 \\
    &=
    \lambda_1^2+\lambda_2^2+2\lambda_1\lambda_2 \\
    &\geq \lambda_1^2+ \lambda_2^2.
\end{align}
Therefore we can compute that
\begin{align}
    \left|\tilde{S}(x)\right|^2
    &=
    \lambda_1^2+\lambda_2^2+\lambda_3^2 \\
    &\leq
    2\lambda_3^2 \\
    &=
    2\left(\lambda_3-\lambda_2^+\right)^2.
\end{align}
Therefore we can see that for all $x\in\mathbb{R}^3$,
\begin{equation}
    \left|\tilde{S}(x)\right|^2
    \leq 
    2 \left(\lambda_3(x)-\lambda_2^+(x)
    \right)^2.
\end{equation}
Integrating this inequality we can conclude that
\begin{equation} \label{TildeBound}
    \left\|\tilde{S}\right\|_{L^2}\leq \sqrt{2}
    \left\|\lambda_3-\lambda_2^+\right\|_{L^2}.
\end{equation}

Using the fact that $S\in\mathcal{L}^2_{st}$,
and that therefore $S=P_{st}(S),$ we can see that
\begin{equation}
    S=P_{st}\left(\lambda_2^+
    \left(I_3-3v_1\otimes v_1\right)\right)
    +P_{st}\left(\tilde{S}\right).
\end{equation}
Applying the triangle inequality, the hypothesis on the sup of the projection of max-mid matrices \eqref{SupHypothesis}, 
and the bound \eqref{TildeBound},
we may conclude that
\begin{align}
    \|S\|_{L^2}
    &\leq
    \left\| P_{st}\left(\lambda_2^+\left(I_3
    -3v_1\otimes v_1\right)\right)\right\|_{L^2}
    +\left\|P_{st}\left(\tilde{S}\right)\right\|_{L^2} \\
    &\leq
    \sqrt{6}r\left\|\lambda_2^+\right\|_{L^2}
    +\left\|\tilde{S}\right\|_{L^2} \\
    & \leq
    r\|S\|_{L^2}+\sqrt{2}
    \left\|\lambda_3-\lambda_2^+\right\|_{L^2},
\end{align}
where we have used the fact that 
$0\leq \lambda_2^+(x)\leq\frac{1}{\sqrt{6}}|S(x)|$,
with equality if and only if $\lambda_2(x)=\lambda_3(x)$.
Rearranging terms we can see that
\begin{equation}
    \left\|\lambda_3-\lambda_2^+\right\|_{L^2}
    \geq \frac{1-r}{\sqrt{2}} \|S\|_{L^2},
\end{equation}
and this completes the proof.
\end{proof}

\begin{corollary}
Suppose
\begin{equation}
    \sup_{\substack{\|\lambda\|_{L^2}=1 \\ 
    \lambda\geq 0\\
    |v(x)|=1
    }}
    \left\|P_{st}\left(\frac{\lambda}{\sqrt{6}}
    \left(I_3-3v \otimes v\right)\right)
    \right\|_{L^2}^2=r^2<1.
\end{equation}
Then for all $S\in\mathcal{L}^2_{st}$,
\begin{equation}
    \|\lambda_3-\lambda_2\|_{L^2}\geq 
    \frac{1-r}{\sqrt{2}} \|S\|_{L^2}.
\end{equation}
\end{corollary}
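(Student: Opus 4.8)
The plan is to obtain the corollary as an immediate consequence of Theorem~\ref{EigenGap} together with the pointwise inequality $\lambda_2^+\geq\lambda_2$. So rather than rerunning the decomposition argument, I would simply compare $\|\lambda_3-\lambda_2\|_{L^2}$ with $\|\lambda_3-\lambda_2^+\|_{L^2}$ and quote the conclusion of Theorem~\ref{EigenGap} directly.

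First I would record that for each $x\in\mathbb{R}^3$ we have $\lambda_2^+(x)=\max(0,\lambda_2(x))\geq\lambda_2(x)$, and hence
\begin{equation}
    \lambda_3(x)-\lambda_2^+(x)\leq \lambda_3(x)-\lambda_2(x).
\end{equation}
Next I would check that the left-hand side is nonnegative, so that this is an inequality between nonnegative quantities which may be squared and integrated. Since $S\in\mathcal{L}^2_{st}$ forces $\tr(S)=0$, we have $\lambda_1+\lambda_2+\lambda_3=0$ almost everywhere, so $\lambda_3\geq 0$; thus if $\lambda_2(x)\leq 0$ then $\lambda_3(x)-\lambda_2^+(x)=\lambda_3(x)\geq 0$, while if $\lambda_2(x)>0$ then $\lambda_3(x)-\lambda_2^+(x)=\lambda_3(x)-\lambda_2(x)\geq 0$. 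Squaring and integrating the displayed inequality then yields
\begin{equation}
    \|\lambda_3-\lambda_2^+\|_{L^2}\leq\|\lambda_3-\lambda_2\|_{L^2}.
\end{equation}

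Finally, the hypothesis of the corollary is exactly the hypothesis of Theorem~\ref{EigenGap}, so that theorem gives $\|\lambda_3-\lambda_2^+\|_{L^2}\geq\frac{1-r}{\sqrt{2}}\|S\|_{L^2}$, and combining this with the previous display produces
\begin{equation}
    \|\lambda_3-\lambda_2\|_{L^2}\geq\|\lambda_3-\lambda_2^+\|_{L^2}\geq\frac{1-r}{\sqrt{2}}\|S\|_{L^2},
\end{equation}
which is the claimed bound. There is essentially no obstacle: this is a routine strengthening of Theorem~\ref{EigenGap}, and the only point that needs to be stated with any care is the nonnegativity of $\lambda_3-\lambda_2^+$, which relies on the trace-free constraint satisfied by every strain matrix. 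I would present the whole argument in a few lines immediately following Theorem~\ref{EigenGap}.
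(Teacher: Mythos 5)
Your argument is correct and is essentially identical to the paper's proof: both reduce the corollary to Theorem \ref{EigenGap} via the pointwise chain $\lambda_3\geq\lambda_2^+\geq\lambda_2$, which gives $0\leq\lambda_3-\lambda_2^+\leq\lambda_3-\lambda_2$ and hence the comparison of $L^2$ norms. Your extra remark justifying nonnegativity from the trace-free constraint is a fine (if slightly more explicit) version of the paper's observation that $\lambda_3\geq\lambda_2^+$.
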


\begin{proof}
Observing that 
$\lambda_3 \geq \lambda_2^+ \geq \lambda_2$,
we can conclude that
\begin{equation}
    0\leq \lambda_3-\lambda_2^+ \leq \lambda_3-\lambda_2.
\end{equation}
Therefore we can apply Theorem \ref{EigenGap}
and conclude that
\begin{align}
    \left\|\lambda_3-\lambda_2\right\|_{L^2} 
    &\geq
    \left\|\lambda_3-\lambda_2^+\right\|_{L^2} \\
    &\geq 
    \frac{1-r}{\sqrt{2}} \|S\|_{L^2}.
\end{align}
\end{proof}

Finally, as discussed in the introduction, we will reduce the question of the projection of maxmid matrices onto the strain constraint space to the question of the projection of rank one matrices onto the strain constraint space. We will now prove the identity \eqref{RankOneReduction}, which is restated for the reader's convenience.

\begin{proposition}
This supremum of the projection of $\mathcal{L}^2_{maxmid}$ matrices onto the space $\mathcal{L}^2_{st}$ can also be expressed in terms of the projection of rank one matrices onto the space $\mathcal{L}^2_{st}$ as follows:
\begin{equation}
    \sup_{\substack{\|\lambda\|_{L^2}=1 \\ 
    \lambda\geq 0\\
    |v(x)|=1
    }}
    \left\|P_{st}\left(\frac{\lambda}{\sqrt{6}}
    \left(I_3-3v \otimes v\right)\right)
    \right\|_{L^2}^2
    =
    \frac{3}{2}\sup_{\|w\|_{L^4}=1}
    \left\|P_{st}(w \otimes w)\right\|_{L^2}^2.
\end{equation}
\end{proposition}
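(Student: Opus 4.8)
The plan is to exploit the fact, recorded earlier in the paper, that $\mathcal{L}^2_{Id}\subset\left(\mathcal{L}^2_{st}\right)^\perp$, so that $P_{st}(\lambda I_3)=0$ for every $\lambda\in L^2$. First I would note that for any $\lambda\in L^2$ and any measurable unit vector field $v$ with $|v(x)|=1$ a.e.,
\begin{equation}
    P_{st}\!\left(\frac{\lambda}{\sqrt{6}}\left(I_3-3v\otimes v\right)\right)
    =-\frac{3}{\sqrt{6}}\,P_{st}(\lambda\,v\otimes v)
    =-\sqrt{\tfrac{3}{2}}\;P_{st}(\lambda\,v\otimes v).
\end{equation}
Taking norms squared and then suprema, the left-hand side of the claimed identity equals $\tfrac{3}{2}$ times
\begin{equation}
    \sup\left\{\|P_{st}(\lambda\,v\otimes v)\|_{L^2}^2 : \|\lambda\|_{L^2}=1,\ \lambda\ge 0,\ |v(x)|=1\ \text{a.e.}\right\},
\end{equation}
so it remains to show this last supremum coincides with $\sup_{\|w\|_{L^4}=1}\|P_{st}(w\otimes w)\|_{L^2}^2$.

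The engine of the proof is the pointwise change of variables $w=\sqrt{\lambda}\,v$, with inverse $\lambda=|w|^2$, $v=w/|w|$. In one direction, given $\lambda\ge 0$ with $\|\lambda\|_{L^2}=1$ and a measurable unit field $v$, set $w=\sqrt{\lambda}\,v$; then $w\otimes w=\lambda\,v\otimes v$ pointwise, and $\|w\|_{L^4}^4=\int_{\mathbb{R}^3}\lambda^2|v|^4=\int_{\mathbb{R}^3}\lambda^2=\|\lambda\|_{L^2}^2=1$, so $w\in\mathbf{L}^4$ with $\|w\|_{L^4}=1$ and $\|P_{st}(w\otimes w)\|_{L^2}^2=\|P_{st}(\lambda\,v\otimes v)\|_{L^2}^2$. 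In the other direction, given $w\in\mathbf{L}^4$ with $\|w\|_{L^4}=1$, put $\lambda=|w|^2$ and let $v=w/|w|$ on $\{w\neq 0\}$ and $v$ equal to any fixed unit vector on $\{w=0\}$; then $\lambda\ge 0$, $\|\lambda\|_{L^2}^2=\int_{\mathbb{R}^3}|w|^4=1$, $|v(x)|=1$ a.e., and $\lambda\,v\otimes v=w\otimes w$ a.e., both sides vanishing on $\{w=0\}$. Thus the families $\{\lambda\,v\otimes v\}$ and $\{w\otimes w\}$ are the same subset of $\mathcal{L}^2$, their images under $\|P_{st}(\cdot)\|_{L^2}^2$ have equal suprema, and multiplying through by $\tfrac{3}{2}$ gives the stated identity.

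I do not expect a serious obstacle: the only steps needing a word of care are measurability of $w/|w|$ (immediate, since $w$ is measurable and $\{w=0\}$ is a measurable set) and the identity $\int_{\mathbb{R}^3}|\sqrt{\lambda}\,v|^4=\int_{\mathbb{R}^3}\lambda^2$, which is exactly the $L^4$-to-$L^2$ matching of normalizations used above and is consistent with the relation $\left\|\frac{\lambda}{\sqrt{6}}(I_3-3v\otimes v)\right\|_{L^2}^2=\|\lambda\|_{L^2}^2$ noted in the remark following Conjecture~\ref{GapConjecture}.
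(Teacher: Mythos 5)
Your proof is correct and follows essentially the same route as the paper: kill the identity part via $P_{st}(\lambda I_3)=0$, then use the substitution $w=\sqrt{\lambda}\,v$ with the normalization check $\|w\|_{L^4}^4=\|\lambda\|_{L^2}^2$. You are in fact slightly more careful than the paper, which only writes out the direction $(\lambda,v)\mapsto w$ and leaves the inverse correspondence $\lambda=|w|^2$, $v=w/|w|$ implicit.
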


\begin{proof}
For all $\|\lambda\|_{L^2}=1, \lambda\geq 0$, and for all $v\in L^\infty$ such that $|v(x)|=1$ almost everywhere,
define $w\in L^4$ by
\begin{equation}
    w(x) =\sqrt{\lambda(x)} v(x).
\end{equation}
Note that 
\begin{align}
    \|w\|_{L^4}^4
    &=
    \int_{\mathbb{R}^3}\lambda(x)^2 |v(x)|^4 \diff x \\
    &=
    \|\lambda\|_{L^2}^2 \\
    &=
    1.
\end{align}
Likewise we can see that
\begin{align}
    P_{st}\left(\frac{\lambda}{\sqrt{6}}
    \left(I_3-3v \otimes v\right)\right)
    &=
    -\frac{3}{\sqrt{6}} P_{st}\left(
    \sqrt{\lambda}v \otimes \sqrt{\lambda}v\right) \\
    &=
    -\frac{3}{\sqrt{6}} P_{st}\left(
    w \otimes w \right),
\end{align}
and therefore
\begin{align}
     \sup_{\substack{\|\lambda\|_{L^2}=1 \\ 
    \lambda\geq 0\\
    |v(x)|=1
    }}
    \left\|P_{st}\left(\frac{\lambda}{\sqrt{6}}
    \left(I_3-3v \otimes v\right)\right)
    \right\|_{L^2}^2
    &=
    \sup_{\|w\|_{L^4}=1}
    \left\|-\frac{3}{\sqrt{6}}P_{st}(w \otimes w)\right\|_{L^2}^2 \\
    &=
    \frac{3}{2}\sup_{\|w\|_{L^4}=1}
    \left\|P_{st}(w \otimes w)\right\|_{L^2}^2.
\end{align}
This completes the proof.
\end{proof}

\section{A viscous Hamilton-Jacobi type structure for the Navier--Stokes matrix potential evolution equation}
\label{MatrixPotentialSection}

In this section, we will consider the evolution equation for the matrix potential of the velocity in the context of Navier--Stokes equations. To begin, we consider an identity involving the divergence of a matrix and the Helmholtz projections onto the space of strain matrices and divergence free vector fields.

\begin{proposition} \label{DivergenceProjection}
For all $M\in \mathcal{L}^2$
\begin{equation}
    \divr P_{st}(M)= P_{df}(\divr(M))
\end{equation}
\end{proposition}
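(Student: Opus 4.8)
The plan is to work entirely on the Fourier side, where both projections become multiplication operators determined by the choice of the orthonormal basis $\mathcal{B}(\xi) = \{\xi/|\xi|, \eta, \mu\}$ used throughout Section \ref{DecompositionSection}. First I would fix $M \in \mathcal{L}^2$ and recall that the symbol of the divergence is $\widehat{\divr(M)}(\xi) = 2\pi i\, \widehat{M}(\xi)\xi$, so that $\widehat{\divr(M)}(\xi) = 2\pi i |\xi|\, \widehat{M}(\xi)\,(\xi/|\xi|)$; in other words, applying $\divr$ to a matrix and reading off its action amounts, up to the scalar $2\pi i |\xi|$, to taking the first column of $[\widehat{M}(\xi)]_{\mathcal{B}(\xi)}$.

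Next I would use Proposition \ref{FourierConstraint} together with the orthogonal splitting $\mathcal{S}^{3\times3} = V_1 \oplus V_2 \oplus V_3$ from Lemma \ref{LemmaSym6}. Writing $[\widehat{M}(\xi)]_{\mathcal{B}(\xi)}$ in the basis $\mathcal{B}$ of Lemma \ref{LemmaSym6}, the $V_3$-component (the divergence-free part) has vanishing first row and column, so it contributes nothing to $\widehat{M}(\xi)\xi$; the $V_2$-component (the Hessian part) is a multiple of $\mathrm{diag}(1,0,0)$, whose first column is a multiple of $e_1$, i.e. of $\xi/|\xi|$; and the $V_1$-component (the strain part) is a multiple of $\tfrac{1}{\sqrt2}(e_1\otimes e_2 + e_2 \otimes e_1)$ plus a multiple of $\tfrac{1}{\sqrt2}(e_1 \otimes e_3 + e_3 \otimes e_1)$, whose first column lies in $\spn\{\eta,\mu\}$, i.e. in $(\xi/|\xi|)^\perp$. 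Consequently $P_{st}(M)$ is precisely the part of $M$ whose divergence (in Fourier terms, $\widehat{M}(\xi)\xi$) lands in $(\xi/|\xi|)^\perp$, while $P_{Hess}(M)$ carries the component of $\widehat{M}(\xi)\xi$ along $\xi/|\xi|$, and the divergence-free part drops out. Since the classical Helmholtz projection $P_{df}$ acting on a vector field $v$ is, on the Fourier side, orthogonal projection of $\widehat{v}(\xi)$ onto $(\xi/|\xi|)^\perp$, this shows $\widehat{\divr P_{st}(M)}(\xi) = 2\pi i |\xi| \cdot \big(\text{projection of } [\widehat{M}(\xi)]_{\mathcal{B}}e_1 \text{ onto }\spn\{\eta,\mu\}\big) = P_{df}(\widehat{\divr M})(\xi)$, which is the claim after inverting the Fourier transform.

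Alternatively — and this might be cleaner to write — I would argue by duality using the decomposition $\mathcal{L}^2 = \mathcal{L}^2_{st} \oplus \mathcal{L}^2_{Hess} \oplus \mathcal{L}^2_{divfree}$: it suffices to check the identity on each summand. For $M = \nabla_{sym}(-\Delta)^{-1/2}u$ with $u \in \mathbf{L}^2_{df}$ we have $P_{st}(M) = M$ and $\divr M = -\tfrac12(-\Delta)^{1/2}u \in \mathbf{L}^2_{df}$, so $P_{df}(\divr M) = \divr M = \divr P_{st}(M)$. For $M = \Hess(-\Delta)^{-1}f$ we have $P_{st}(M) = 0$, while $\divr M = \nabla(-\Delta)^{-1}\,\divr^2(-\Delta)^{-1} f$-type computation shows $\divr M = \nabla f$ up to Riesz factors, a pure gradient, so $P_{df}(\divr M) = 0 = \divr P_{st}(M)$. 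For $M \in \mathcal{L}^2_{divfree}$ we have $\divr M = 0$ and $P_{st}(M) = 0$, so both sides vanish. Linearity then gives the result for general $M$.

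I expect the only real obstacle to be bookkeeping with the Riesz-type factors $(-\Delta)^{\pm 1/2}$ hidden in the definitions of $\mathcal{L}^2_{st}$, $P_{df}$, and $\divr$, making sure the homogeneities match so that, e.g., $\divr$ of a strain matrix is genuinely a divergence-free vector field in the $(-\Delta)^{-1/2}$-normalized sense of Definition \ref{BigDef}. The first (Fourier-symbol) approach sidesteps this most transparently, since there every operator is a bounded multiplier once the homogeneous factor $2\pi i|\xi|$ is pulled out, and the key geometric fact — first column of $V_1 \subset (\xi/|\xi|)^\perp$, first column of $V_2 \subset \spn\{\xi/|\xi|\}$, first column of $V_3 = 0$ — is exactly the content of Proposition \ref{FourierConstraint}.
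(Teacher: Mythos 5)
Your proposal is correct, and your second (``alternatively'') argument is essentially the paper's own proof: the paper decomposes $M = 2\nabla_{sym}u + \Hess(-\Delta)^{-1}f + Q$ with $u\in\dot{\mathbf{H}}^1_{df}$, $f\in L^2$, $Q\in\mathcal{L}^2_{divfree}$, computes $\divr(M) = \Delta u - \nabla f$, and observes that $P_{df}$ kills the gradient while $\divr P_{st}(M) = \divr(2\nabla_{sym}u) = \Delta u$; this is precisely your check-on-each-summand argument, just written out as a single computation. Your first (Fourier-symbol) approach — observing that in the basis $\mathcal{B}(\xi)$ the first column of the $V_1$ block lies in $\spn\{\eta,\mu\} = (\xi/|\xi|)^\perp$, that of $V_2$ lies in $\spn\{\xi/|\xi|\}$, and that of $V_3$ vanishes, and that $P_{df}$ is pointwise projection onto $(\xi/|\xi|)^\perp$ — is a genuinely different and equally valid route; it is more self-contained (it uses only Proposition \ref{FourierConstraint} and not the full orthogonal decomposition theorem), at the cost of a bit more bookkeeping with the multiplier $2\pi i|\xi|$. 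The small slip in your Hessian computation (``$\divr M = \nabla(-\Delta)^{-1}\divr^2(-\Delta)^{-1}f$-type'') is harmless: the clean identity is $\divr\Hess(-\Delta)^{-1}f = \nabla\Delta(-\Delta)^{-1}f = -\nabla f$, which is indeed a gradient as you need.
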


\begin{proof}
Applying the orthogonal decomposition in Theorem \ref{HelmholtzOne} let
\begin{equation}
    M=2\nabla_{sym}u+\Hess(-\Delta)^{-1}f+Q,
\end{equation}
where $u\in\mathbf{\dot{H}}^1_{df}, f\in L^2, 
Q\in \mathcal{L}^2_{divfree}$.
Taking the divergence of $M$, we find that
\begin{equation}
    \divr(M)=\Delta u-\nabla f.
\end{equation}
We know that $\nabla \cdot \Delta u=0,$ so applying the Helmholtz projection we find that
\begin{equation}
    P_{df}(\divr(M))=\Delta u.
\end{equation}
Likewise, it is clear by definition that
\begin{equation}
    P_{st}(M)=2\nabla_{sym}u,
\end{equation}
and so
\begin{equation}
    \divr P_{st}(M)=\Delta u.
\end{equation}
This completes the proof.
\end{proof}

\begin{remark}
Consider matrix
    \begin{equation}
        M=2(-\Delta)^{-1}\nabla_{sym}u.
    \end{equation}
Assuming $u$ has sufficient regularity and $\nabla\cdot u=0$, then we can see that $M$ is in the strain space, and by Theorem \ref{CharacterizationARMA},
\begin{equation}
    u=-\divr(M).
\end{equation}
Now consider the evolution equation
\begin{equation} \label{MatrixEvolutionRemark}
    \partial_t M-\Delta M
    -P_{st}(u\otimes u)
    =0,
\end{equation}
where $u=-\divr(M)$.
Applying Proposition \ref{DivergenceProjection}, we can see that
\begin{equation}
    \divr P_{st}(u\otimes u)
    =
    P_{df} \divr (u\otimes u),
\end{equation}
and so we can see that if we take the divergence of both sides of equation \eqref{MatrixEvolutionRemark} (and multiply by negative one), we obtain the Navier--Stokes equation,
\begin{equation}
    \partial_t u-\Delta u 
    +P_{df}\divr (u\otimes u)
    =0.
\end{equation}

This implies that the evolution equation \eqref{MatrixEvolutionRemark} describes the evolution of the matrix potential in the Navier--Stokes problem, something we will show more rigourously below in the context of mild solutions.
This is significant, as the viscous Hamilton-Jacobi equation,
\begin{equation}
    \partial_t-\Delta f-|\nabla f|^2=0,
\end{equation}
has global smooth solutions. 
In particular, $e^f$ satisfies the heat equation, and so we have
\begin{equation}
    f(\cdot,t)=\log \left(e^{t\Delta} 
    e^{f^0}\right).
\end{equation}
This approach cannot, unfortunately be used to obtain regularity for the Navier--Stokes equation, as the projection $P_{st}$ is a singular integral operator and the matrix structure makes the derivatives of the exponential considerably more complicated.
It does however, suggest another structure which could be used in order to search for new controlled quantities for the Navier--Stokes equation.
\end{remark}

We now define mild solutions for the Navier--Stokes matrix potential equation, and prove an equivalence with mild solutions of the Navier--Stokes equation.

\begin{definition} \label{MildMatrix}
$M\in C\left(\left[0,T_{max}\right);
\dot{\mathcal{H}}^1_{st}\cap\dot{\mathcal{H}}^2_{st}\right)$ 
is a mild solution of the Navier--Stokes matrix potential equation if for all $0<t<T_{max}$,
\begin{equation}
    M(\cdot,t)=e^{t\Delta}M^0
    +\int_0^t e^{(t-\tau)\Delta}
    P_{st}(\divr(M)\otimes \divr(M))(\cdot,\tau) \diff\tau.
\end{equation}
\end{definition}

\begin{definition} \label{MildNS}
$u\in C\left(\left[0,T_{max}\right);
\mathbf{H}^1_{df}\right)$ 
is a mild solution of the Navier--Stokes equation 
if for all $0<t<T_{max}$,
\begin{equation}
    u(\cdot,t)=e^{t\Delta}u^0
    -\int_0^t e^{(t-\tau)\Delta}
    P_{df} \divr (u \otimes u)(\cdot,\tau) \diff\tau.
\end{equation}
\end{definition}

\begin{proposition} \label{MildEquivalence}
There is an equivalence for mild solutions of the Navier--Stokes equation and the Navier--Stokes matrix potential equation:
$M\in C\left(\left[0,T_{max}\right);
\mathcal{\dot{H}}^1_{st}\cap\dot{\mathcal{H}}^2_{st}\right)$ 
is a mild solution of the Navier--Stokes matrix potential equation if and only if
$u\in C\left(\left[0,T_{max}\right);\mathbf{H}^1_{df}\right)$ is a mild solution of the Navier--Stokes equation,
where $u=-\divr(M)$ and $M=2\nabla_{sym}(-\Delta)^{-1}u$
\end{proposition}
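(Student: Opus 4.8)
The plan is to exploit that the two maps relating $u$ and $M$, namely $u=-\divr(M)$ and $M=2\nabla_{sym}(-\Delta)^{-1}u$, are mutually inverse bijections between $\mathbf{L}^2_{df}$ and $\mathcal{L}^2_{st}$ (and between the corresponding homogeneous Sobolev scales), and that every operator appearing in the two Duhamel formulas --- $e^{t\Delta}$, $\divr$, and $\nabla_{sym}(-\Delta)^{-1}$ --- is a Fourier multiplier, hence they commute with one another and may be moved inside the time integral $\int_0^t$. First I would record the elementary identities $\divr\bigl(\nabla_{sym}(-\Delta)^{-1}v\bigr)=-\tfrac12 v$ for $v\in\mathbf{L}^2_{df}$ and, conversely, $2\nabla_{sym}(-\Delta)^{-1}\bigl(-\divr(M)\bigr)=M$ for $M\in\mathcal{L}^2_{st}$, together with $-\divr(M^0)=u^0$, $2\nabla_{sym}(-\Delta)^{-1}u^0=M^0$, and the observation that $\divr(M)=-u$ forces $\divr(M)\otimes\divr(M)=u\otimes u$. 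These reduce the problem to matching the nonlinear terms of the two Duhamel formulas.

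The one ingredient beyond Proposition \ref{DivergenceProjection} is its companion operator identity
\begin{equation}
    2\nabla_{sym}(-\Delta)^{-1}P_{df}\bigl(\divr(N)\bigr) = -P_{st}(N),
\end{equation}
valid for all $N\in\mathcal{L}^2$ (and, after extending the Fourier-multiplier decomposition of Theorem \ref{HelmholtzOne} to $L^p$-based spaces, for $N=u\otimes u$). I would prove it exactly as Proposition \ref{DivergenceProjection} is proved: decompose $N=2\nabla_{sym}w+\Hess(-\Delta)^{-1}f+Q$ via Theorem \ref{HelmholtzOne} with $w\in\mathbf{\dot{H}}^1_{df}$ and $Q\in\mathcal{L}^2_{divfree}$, so $P_{st}(N)=2\nabla_{sym}w$ and $\divr(N)=\Delta w-\nabla f$; then $P_{df}(\divr(N))=\Delta w$, hence $2\nabla_{sym}(-\Delta)^{-1}P_{df}(\divr(N))=-2\nabla_{sym}w=-P_{st}(N)$. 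This is the precise sense in which $2\nabla_{sym}(-\Delta)^{-1}$ inverts $-\divr$ at the level of the nonlinearities.

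With these in hand the two implications are symmetric. For the forward direction, suppose $M$ is a mild solution of the matrix potential equation and set $u=-\divr(M)$; since $M\in C\bigl([0,T_{max});\dot{\mathcal{H}}^1_{st}\cap\dot{\mathcal{H}}^2_{st}\bigr)$ one gets $u\in C\bigl([0,T_{max});\mathbf{H}^1_{df}\bigr)$, with $\nabla\cdot u=-\divr^2(M)=0$ because $\mathcal{L}^2_{st}\subset\ker(\divr^2)$. Applying $-\divr$ to the Duhamel identity of Definition \ref{MildMatrix}, commuting $\divr$ past $e^{t\Delta}$ and inside $\int_0^t$, and using Proposition \ref{DivergenceProjection} together with $\divr(M)\otimes\divr(M)=u\otimes u$, gives exactly the Duhamel identity of Definition \ref{MildNS}. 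For the reverse direction, suppose $u$ is a mild solution of Navier--Stokes and set $M=2\nabla_{sym}(-\Delta)^{-1}u$; then $M\in C\bigl([0,T_{max});\dot{\mathcal{H}}^1_{st}\cap\dot{\mathcal{H}}^2_{st}\bigr)$ because $2\nabla_{sym}(-\Delta)^{-1}$ gains one derivative and sends $\mathbf{L}^2_{df}$ into $\mathcal{L}^2_{st}$. Applying $2\nabla_{sym}(-\Delta)^{-1}$ to the Duhamel identity of Definition \ref{MildNS}, commuting it past $e^{t\Delta}$ and inside $\int_0^t$, and using the companion identity together with $u\otimes u=\divr(M)\otimes\divr(M)$, produces the Duhamel identity of Definition \ref{MildMatrix}. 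Since $u\mapsto M$ and $M\mapsto u$ are inverse to each other, the two solution classes are in bijection, which is the asserted equivalence.

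The main obstacle I expect to be purely technical bookkeeping rather than conceptual: one must verify that, in the stated regularity classes, $\divr$ and $2\nabla_{sym}(-\Delta)^{-1}$ genuinely commute with the heat semigroup and may be exchanged with the Bochner integral, and that $u\otimes u$ lies in a space (an $L^3$-type space, via $\dot{H}^1\hookrightarrow L^6$ in three dimensions) on which $P_{df}\divr$, $P_{st}$, and the companion identity are meaningful. The underlying algebra is entirely forced once Proposition \ref{DivergenceProjection} and its companion identity are available.
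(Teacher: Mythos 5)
Your proposal is correct and follows essentially the same two-step strategy as the paper: apply $-\divr$ to pass from the matrix-potential Duhamel formula to the Navier--Stokes one, and apply $2\nabla_{sym}(-\Delta)^{-1}$ to go back. The only cosmetic difference is in the reverse direction: you bundle the needed algebra into a single ``companion identity'' $2\nabla_{sym}(-\Delta)^{-1}P_{df}\bigl(\divr(N)\bigr) = -P_{st}(N)$ proved directly from the orthogonal decomposition of Theorem \ref{HelmholtzOne}, whereas the paper reaches the same conclusion by first invoking Proposition \ref{DivergenceProjection} to write $P_{df}\divr(u\otimes u)=\divr P_{st}(u\otimes u)$ and then invoking the Riesz-transform constraint of Theorem \ref{CharacterizationARMA}, namely $P_{st}(u\otimes u)=-2\nabla_{sym}\divr(-\Delta)^{-1}P_{st}(u\otimes u)$, to collapse the composition. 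These are the same computations, merely organized differently, so the proposal matches the paper's proof in substance.
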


\begin{proof}
Suppose $M\in C\left(\left[0,T_{max}\right);
\dot{\mathcal{H}}^1_{st}\cap\dot{\mathcal{H}}^2_{st}\right)$ is a mild solution of the Navier--Stokes matrix potential equation,
and let $u=-\divr(M)$, noting that this implies that
$u\in C\left(\left[0,T_{max}\right)
;\mathbf{H}^1_{df}\right)$.
By definition we have for all $0<t<T_{max}$
\begin{align}
    M(\cdot,t)
    &=
    e^{t\Delta}M^0
    +\int_0^t e^{(t-\tau)\Delta}
    P_{st}(\divr(M)\otimes \divr(M))(\cdot,\tau) \diff\tau \\
    &=
    e^{t\Delta}M^0
    +\int_0^t e^{(t-\tau)\Delta}
    P_{st}(u\otimes u)(\cdot,\tau) \diff\tau.
\end{align}
Taking the negative divergence of both sides of this equation, and putting derivatives on $M^0$ and $P_{st}(u\otimes u)$ in the convolution rather than on the heat kernel, 
we find that for all $0<t<T_{max}$
\begin{align}
    u(\cdot,t)
    &=
    e^{t\Delta} u^0
    -\int_0^t e^{(t-\tau)\Delta} 
    \divr P_{st}(u\otimes u)(\cdot,\tau) \diff\tau \\
    &=
    e^{t\Delta} u^0
    -\int_0^t e^{(t-\tau)\Delta} 
    P_{df}\divr (u\otimes u)(\cdot,\tau) \diff\tau \\
    &=
    e^{t\Delta}u^0
    -\int_0^t e^{(t-\tau)\Delta}
    P_{df}((u\cdot\nabla) u)(\cdot,\tau) \diff\tau,
\end{align}
using Proposition \ref{DivergenceProjection}, and the fact that $\nabla \cdot u=0$.
Note that the formal differentiation is entirely valid because of the smoothing due to the heat kernel.
This means that we can conclude that 
$u\in C\left(\left[0,T_{max}\right);
\mathbf{H}^1_{df}\right)$
is a mild solution of the Navier--Stokes equation, and this completes one direction of the proof.

Now suppose 
$u\in C\left(\left[0,T_{max}\right);\mathbf{H}^1_{df}\right)$
is a mild solution of the Navier--Stokes equation, 
and let $M=2\nabla_{sym}(-\Delta)^{-1}u$,
noting that this implies
$M\in C\left(\left[0,T_{max}\right);
\dot{\mathcal{H}}^1_{st}\cap\dot{\mathcal{H}}^2_{st}\right)$.
By definition we have for all $0<t<T_{max}$
\begin{equation}
    u(\cdot,t)=
    e^{t\Delta} u^0
    -\int_0^t e^{(t-\tau)\Delta} 
    P_{df}\divr (u\otimes u)(\cdot,\tau) \diff\tau.
\end{equation}
Taking 2$\nabla_{sym}(-\Delta)^{-1}$ of both sides of this equation and applying Proposition \ref{DivergenceProjection},
we find that for all $0<t<T_{max}$
\begin{align}
    M(\cdot,t)
    &=
    e^{t\Delta}M^0-\int_0^t e^{(t-\tau)\Delta}
    2\nabla_{sym}(-\Delta)^{-1}P_{df}\divr(u\otimes u)
    (\cdot,\tau) \diff\tau \\
    &=
    e^{t\Delta}M^0-\int_0^t e^{(t-\tau)\Delta}
    2\nabla_{sym}(-\Delta)^{-1}\divr P_{st}(u\otimes u)
    (\cdot,\tau) \diff\tau.
\end{align}
Observing that by definition 
$P_{st}(u\otimes u)\in\mathcal{L}^2_{st},$
and applying Theorem \ref{CharacterizationARMA},
we can see that
\begin{equation}
    P_{st}(u\otimes u)=
    -2\nabla_{sym}\divr(-\Delta)^{-1} P_{st}(u\otimes u).
\end{equation}
Recalling that $u=-\divr(M)$, we find that
for all $0<t<T_{max}$
\begin{align}
    M(\cdot,t)
    &=
    e^{t\Delta}M^0+\int_0^t e^{(t-\tau)\Delta}
    P_{st}(u\otimes u)
    (\cdot,\tau) \diff\tau \\
    &=
    e^{t\Delta}M^0+\int_0^t e^{(t-\tau)\Delta}
    P_{st}(\divr(M)\otimes \divr(M))
    (\cdot,\tau) \diff\tau.
\end{align}
Therefore we can conclude that
$M\in C\left(\left[0,T_{max}\right);
\dot{\mathcal{H}}^1_{st}\cap\dot{\mathcal{H}}^2_{st}\right)$
is a mild solution to the Navier--Stokes matrix potential equation and this completes the proof.
\end{proof}

Using Proposition \ref{MildEquivalence}, we will be able to deal with the theory of mild solutions of the Navier--Stokes matrix potential equation by reducing it to the theory of mild solutions for the Navier--Stokes equation, but we will first need to define the space of bounded mean oscillation functions.

For a function $f\in L^1_{loc}\left(\mathbb{R}^d\right)$,
define the average value over a cube $Q\in\mathbb{R}^d$
to be
\begin{equation}
    f_Q=\frac{1}{|Q|}\int_{Q}f(y)\diff y,
\end{equation}
and define the mean oscillation over the cube to be
\begin{equation}
    \frac{1}{|Q|}\int |f(y)-f_Q| \diff y.
\end{equation}
The space of bounded mean oscillation functions is defined to be the space where the mean oscillation is uniformly bounded.
The standard definition of the $BMO$ semi-norm is the supremum of the mean oscillation over all cubes,
\begin{equation}
    \|f\|_{BMO}=\sup_{Q\subset \mathbb{R}^d}
    \frac{1}{|Q|}\int |f(y)-f_Q| \diff y,
\end{equation}
however we will instead use the Carleson measure characterization as in \cite{KochTataru}, which is equivalent and will be more convenient for our purposes.

\begin{definition}
For all $f\in L^1_{loc}
\left(\mathbb{R}^d\right)$,
\begin{equation}
    \|f\|_{BMO}
    =
    \left(\sup_{\substack{x\in\mathbb{R}^3
    \\ R>0}}
    \frac{1}{|B(x,R)|}
    \int_{B(x,R)} \int_0^{R^2}
    \left|e^{t\Delta}\nabla f\right|^2
    \diff t \diff y\right)^\frac{1}{2};
\end{equation}
and
\begin{equation}
    BMO=\left\{f\in L^1_{loc}\left(\mathbb{R}^d\right):
    \|f\|_{BMO}<+\infty \right\}.
\end{equation}

We define the space of $BMO$ functions in the strain constraint space using the distributional definition, Definition \ref{DistributionDef}.
In particular, if $M\in BMO\left(\mathbb{R}^3;
\mathcal{S}^{3\times 3}\right)$,
then $M\in BMO_{st}$ if and only if
\begin{equation}
    \tr(M)=0,
\end{equation}
and
\begin{equation} \label{BMOstCondition}
    M+
    2\nabla_{sym}\divr(-\Delta)^{-1}M
    =0.
\end{equation}
Finally we define the space of derivatives of $BMO_{st}$ functions as
\begin{equation}
    BMO^{-1}_{df}=\left\{-\divr(M): 
    M\in BMO_{st}\right\},
\end{equation}
and norm this space by
\begin{equation}
    \|-\divr(M)\|_{BMO^{-1}}=\|M\|_{BMO}.
\end{equation}
\end{definition}

\begin{remark}
Note that the condition \eqref{BMOstCondition} is the same condition proven by the first author for $\mathcal{L}^2_{st}$ in \cite{MillerStrain}.
If we take the Riesz transform to be given by $R=\nabla(-\Delta)^{-\frac{1}{2}}$,
then this condition can be restated in terms of the Riesz transform as
\begin{equation}
    M+2R_{sym} R\cdot M=0.
\end{equation}
The Riesz transform is a bounded linear map from $BMO$ to $BMO$, so condition \eqref{BMOstCondition} is clearly well defined.

Further note that if $\divr(M)=0$, then by the condition \eqref{BMOstCondition}, we can see that $M=0$. This implies that $\divr: BMO_{st}\to BMO^{-1}_{df}$ is a bijection, and so the $M\in BMO_{st}$ satisfying $u=-\divr(M)$ is unique, and the $BMO^{-1}$ norm is also well defined.
\end{remark}

\begin{remark}
Note that $\|\cdot\|_{BMO}$ is not a norm for $BMO\left(\mathbb{R}^3;
\mathcal{S}^{3\times 3}\right)$,
because $\|\cdot\|_{BMO}$ applied to any constant function yields zero; however, $\|\cdot\|_{BMO}$ is a norm modulo the constant functions.
Furthermore, the Riesz-type transformation constraint on strain matrices given by \eqref{BMOstCondition} removes nonzero constant matrices from strain space $BMO_{st}$, because the divergence of a constant matrix is zero.
In particular, for all $M\in BMO_{st}$, if $\|M\|_{BMO}=0$, then $M$ is constant, which implies that $\divr(M)=0$, and therefore
\begin{equation}
    M=-2\nabla_{sym}
    (-\Delta)^{-1}\divr(M)
    =0.
\end{equation}
We can then conclude that $\|\cdot\|_{BMO}$ is a norm on the space $BMO_{st}$, which also immediately implies that $\|\cdot\|_{BMO^{-1}}$ is a norm on $BMO^{-1}_{df}$ when combined with the uniqueness of the matrix potential stated above.
\end{remark}

\begin{theorem} \label{MildExistenceNS}
For all initial data $u^0\in\dot{\mathbf{H}}^1_{df}$ 
there exists a unique mild solution of the Navier--Stokes equation
$u\in C\left(\left[0,T_{max}\right);\mathbf{H}^1_{df}\right)$ 
locally in time for some $T_{max}>0$.
In particular, there exists a universal constant $C$ independent of $u^0$ such that 
\begin{equation}
    T_{max}\geq \frac{C}
    {\left\|\nabla u^0\right\|_{L^2}^4}.
\end{equation}
This solution satisfies the energy equality,
with for all $0<t<T_{max}$
\begin{equation}
    \frac{1}{2}\|u(\cdot,t)\|_{L^2}^2
    +\int_0^t\|\nabla u(\cdot,\tau)\|_{L^2}^2 \diff\tau
    = \frac{1}{2}\left\|u^0\right\|_{L^2}^2.
\end{equation}
Furthermore, there exists an $\epsilon>0$, such that if 
$\left\|u^0\right\|_{BMO^{-1}}<\epsilon$,
then $T_{max}=+\infty$.
\end{theorem}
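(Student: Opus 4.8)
The final statement is essentially classical, and the plan is to assemble three standard ingredients: a Fujita--Kato iteration for local existence with the sharp lifespan exponent, the energy identity for the resulting regular solution, and the Koch--Tataru small-data theory in $BMO^{-1}$ for the global statement. Since the point of the section is to transfer this well-posedness to the matrix potential equation via Proposition \ref{MildEquivalence}, none of the new Helmholtz-decomposition machinery is needed here; the argument is entirely on the $u$ side.

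First I would prove local existence by a contraction mapping argument on the mild formulation of Definition \ref{MildNS}. Write the Duhamel map as $\Phi(u) = e^{t\Delta}u^0 - B(u,u)$ with bilinear term $B(u,v)(t) = \int_0^t e^{(t-\tau)\Delta}P_{df}\divr(u\otimes v)(\cdot,\tau)\,\diff\tau$, and work in a Kato-type space $\mathcal{K}_T$ adapted to $\dot{\mathbf{H}}^1$ data, e.g. with norm $\sup_{0<t<T}\|u(t)\|_{\dot{\mathbf{H}}^1} + \sup_{0<t<T} t^{1/4}\|u(t)\|_{\dot{\mathbf{H}}^{3/2}}$. Smoothing of the heat semigroup gives $\|e^{t\Delta}u^0\|_{\mathcal{K}_T}\lesssim\|u^0\|_{\dot{\mathbf{H}}^1}$, and the $L^p$--$L^q$ kernel bounds together with boundedness of $P_{df}$ give the bilinear estimate $\|B(u,v)\|_{\mathcal{K}_T}\le C\,T^{1/4}\|u\|_{\mathcal{K}_T}\|v\|_{\mathcal{K}_T}$; hence $\Phi$ is a contraction on a small ball as soon as $C'\,T^{1/4}\|\nabla u^0\|_{L^2}<\tfrac12$, which produces a unique mild solution on $[0,T]$ and the bound $T_{max}\ge c\|\nabla u^0\|_{L^2}^{-4}$ --- the exponent $4$ being exactly the one forced by the scaling $u\mapsto\lambda u(\lambda x,\lambda^2 t)$. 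A standard parabolic bootstrap upgrades the solution to $C([0,T_{max});\mathbf{H}^1_{df})$ with $u\in L^2_{loc}((0,T_{max});\dot{\mathbf{H}}^2)$ and $\partial_t u\in L^2_{loc}((0,T_{max});\mathbf{L}^2)$, and uniqueness in this class is routine.

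Next, the energy equality. With the regularity just produced, pair the equation $\partial_t u-\Delta u+P_{df}\divr(u\otimes u)=0$ with $u$ in $\mathbf{L}^2$: the nonlinear term contributes $\langle P_{df}\divr(u\otimes u),u\rangle=\langle\divr(u\otimes u),u\rangle=-\int(u\cdot\nabla u)\cdot u=0$ by the divergence-free condition, so $\tfrac12\frac{\diff}{\diff t}\|u(t)\|_{L^2}^2=-\|\nabla u(t)\|_{L^2}^2$ for $0<t<T_{max}$; integrating from $\delta$ to $t$ and letting $\delta\to0$, using continuity of $u$ into $\mathbf{H}^1$ hence $\mathbf{L}^2$ at the initial time, yields the stated identity.

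Finally, for the small-data global statement I would invoke the Koch--Tataru theorem \cite{KochTataru}: there is $\epsilon>0$ such that $\|u^0\|_{BMO^{-1}}<\epsilon$ yields a global mild solution in the Koch--Tataru space, enjoying the smoothing bound $\sup_{t>0}\sqrt{t}\,\|u(t)\|_{\mathbf{L}^\infty}\lesssim\epsilon$ together with the associated Carleson-measure control. Since our $\mathbf{H}^1$ solution also lies in that space on a short initial interval, uniqueness identifies the two on $[0,T_{max})$, so it only remains to show $\|\nabla u(t)\|_{L^2}$ cannot blow up. Testing the equation against $-\Delta u$ and estimating $|\langle(u\cdot\nabla)u,\Delta u\rangle|\le\|u\|_{L^\infty}\|\nabla u\|_{L^2}\|\Delta u\|_{L^2}\le\tfrac12\|\Delta u\|_{L^2}^2+\tfrac12\|u\|_{L^\infty}^2\|\nabla u\|_{L^2}^2$ gives $\frac{\diff}{\diff t}\|\nabla u\|_{L^2}^2\le\|u(t)\|_{\mathbf{L}^\infty}^2\|\nabla u\|_{L^2}^2$; feeding in $\|u(t)\|_{\mathbf{L}^\infty}^2\lesssim\epsilon^2/t$ and applying Gr\"onwall shows $\|\nabla u(t)\|_{L^2}$ grows at most like a small power of $t$ on every finite interval, so it stays finite and $T_{max}=+\infty$. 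The main obstacle is really packaged inside the cited Koch--Tataru result; granting that, the only genuinely delicate points are calibrating the bilinear estimate so the lifespan exponent is sharp, and the cross-space bookkeeping in this last step --- reconciling the $\mathbf{H}^1$ solution with the $BMO^{-1}$ solution and then closing the $\dot{\mathbf{H}}^1$ a priori bound --- while the remainder is routine Fujita--Kato theory.
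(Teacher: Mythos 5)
Your proposal is correct and matches the paper's treatment: the paper does not prove Theorem \ref{MildExistenceNS} at all, but simply records it as classical, citing Fujita--Kato for the local existence and lifespan bound, Leray for the energy equality, and Koch--Tataru for the small-data global result in $BMO^{-1}$. Your sketch reconstructs exactly these three ingredients (contraction mapping with the scaling-sharp $T^{1/4}$ bilinear estimate, pairing with $u$ for the energy identity, and the Koch--Tataru smoothing bound plus a Gr\"onwall argument on $\|\nabla u\|_{L^2}^2$ to propagate the $\dot{\mathbf{H}}^1$ bound globally), so it is the same approach carried out in more detail than the paper provides.
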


\begin{remark}
Fujita and Kato proved \cite{KatoFujita} the local existence of mild solutions for $\dot{H}^1$ initial data, with a uniform-in-norm lower bound on lifespan.
Leray first proved the energy equality for strong solutions---and energy inequality for weak solutions---in his ground-breaking work \cite{Leray}.
Finally, Koch and Tataru proved \cite{KochTataru} the global existence of strong solutions with small initial data in $BMO^{-1}$.
\end{remark}

\begin{corollary} \label{MildExistenceMatrix}
For all initial data $M^0\in
\dot{\mathcal{H}}^1_{st}\cap\dot{\mathcal{H}}^2_{st}$
there exists a unique mild solution of the Navier--Stokes matrix potential equation
$M\in C\left(\left[0,T_{max}\right);
\dot{\mathcal{H}}^1_{st}\cap\dot{\mathcal{H}}^2_{st}\right)$
locally in time for some $T_{max}>0$.
In particular, there exists a universal constant $C$ independent of $M^0$ such that 
\begin{equation} \label{ExistenceBound}
    T_{max}\geq \frac{C}{\|-\Delta M^0\|_{L^2}^4}.
\end{equation}
This solution satisfies the energy equality,
with for all $0<t<T_{max}$
\begin{equation} \label{MatrixEnergyEquality}
    \|\divr(M)(\cdot,t)\|_{L^2}^2
    +\int_0^t\|-\Delta M(\cdot,\tau)\|_{L^2}^2 \diff\tau
    = \left\|\divr\left(M^0\right)\right\|_{L^2}^2.
\end{equation}
Furthermore, there exists an $\epsilon>0$, such that if 
$\left\|M^0\right\|_{BMO}<\epsilon$,
then $T_{max}=+\infty$.
\end{corollary}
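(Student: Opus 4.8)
The plan is to deduce the corollary directly from Theorem \ref{MildExistenceNS} and the mild-solution equivalence of Proposition \ref{MildEquivalence}, translating every quantity through the change of variables $u=-\divr(M)$, $M=2\nabla_{sym}(-\Delta)^{-1}u$ and the isometry of Proposition \ref{isometry}.

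First I would record the dictionary between the two problems at the level of norms. If $M^0\in\dot{\mathcal{H}}^1_{st}\cap\dot{\mathcal{H}}^2_{st}$ and $u^0=-\divr(M^0)$, then by Theorem \ref{CharacterizationARMA} we have $M^0=2\nabla_{sym}(-\Delta)^{-1}u^0$, and $u^0\in\mathbf{H}^1_{df}$: it is divergence free by the structure of $\mathcal{L}^2_{st}$ (equivalently by the Fourier characterization in Proposition \ref{FourierConstraint}), it lies in $\mathbf{L}^2$ because $M^0\in\dot{\mathcal{H}}^1_{st}$, and $\nabla u^0\in\mathbf{L}^2$ because $-\Delta M^0=2\nabla_{sym}u^0$ and $M^0\in\dot{\mathcal{H}}^2_{st}$. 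Conversely every $u^0\in\mathbf{H}^1_{df}$ arises this way, so $\divr$ is a bijection $\dot{\mathcal{H}}^1_{st}\cap\dot{\mathcal{H}}^2_{st}\to\mathbf{H}^1_{df}$. Applying Proposition \ref{isometry} to $(-\Delta)^{1/2}u^0\in\mathbf{L}^2_{df}$, together with the trivial identity $u^0=-\divr(M^0)$, gives
\begin{equation}
    \|\divr(M^0)\|_{L^2}^2=\|u^0\|_{L^2}^2,
    \qquad
    \|-\Delta M^0\|_{L^2}^2=4\|\nabla_{sym}u^0\|_{L^2}^2=2\|\nabla u^0\|_{L^2}^2,
\end{equation}
and likewise with $M^0,u^0$ replaced by $M(\cdot,t),u(\cdot,t)$ for each $t$.

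Next I would apply Theorem \ref{MildExistenceNS} to obtain the unique mild solution $u\in C([0,T_{max});\mathbf{H}^1_{df})$ of Navier–Stokes with initial data $u^0$, together with its lifespan bound, energy equality, and small-$BMO^{-1}$ global existence. By Proposition \ref{MildEquivalence} the field $M:=2\nabla_{sym}(-\Delta)^{-1}u$ lies in $C([0,T_{max});\dot{\mathcal{H}}^1_{st}\cap\dot{\mathcal{H}}^2_{st})$ and is a mild solution of the Navier–Stokes matrix potential equation with $M(\cdot,0)=M^0$; uniqueness transfers backward, since two such matrix mild solutions with the same data would have divergences that are Navier–Stokes mild solutions with data $u^0$, hence equal by Theorem \ref{MildExistenceNS}, hence equal after applying $2\nabla_{sym}(-\Delta)^{-1}$ and invoking Theorem \ref{CharacterizationARMA}. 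Because $\|-\Delta M(\cdot,t)\|_{L^2}^2=2\|\nabla u(\cdot,t)\|_{L^2}^2$, the matrix solution leaves $\dot{\mathcal{H}}^2_{st}$ exactly when $u$ leaves $\dot{\mathbf{H}}^1$, so the two maximal existence times coincide.

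Finally I would translate the three quantitative statements. The bound $T_{max}\geq C\|\nabla u^0\|_{L^2}^{-4}$ becomes \eqref{ExistenceBound} upon substituting $\|\nabla u^0\|_{L^2}^2=\tfrac12\|-\Delta M^0\|_{L^2}^2$ and absorbing the numerical factor into $C$. Multiplying the Navier–Stokes energy equality by $2$ and substituting $\|u(\cdot,t)\|_{L^2}=\|\divr(M)(\cdot,t)\|_{L^2}$ and $\|\nabla u(\cdot,\tau)\|_{L^2}^2=\tfrac12\|-\Delta M(\cdot,\tau)\|_{L^2}^2$ yields \eqref{MatrixEnergyEquality}. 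For the small-data statement, finiteness of $\|M^0\|_{BMO}$ together with the trace-free and Riesz-transform constraints puts $M^0\in BMO_{st}$, whence $u^0=-\divr(M^0)\in BMO^{-1}_{df}$ with $\|u^0\|_{BMO^{-1}}=\|M^0\|_{BMO}$ by definition, so $\|M^0\|_{BMO}<\epsilon$ forces $T_{max}=+\infty$ for $u$ and hence for $M$. I expect the only delicate point to be the norm bookkeeping in the second step — confirming that $\divr$ identifies $\dot{\mathcal{H}}^1_{st}\cap\dot{\mathcal{H}}^2_{st}$ with $\mathbf{H}^1_{df}$ and that the constants coming out of Proposition \ref{isometry} are tracked correctly, so that \eqref{ExistenceBound} and \eqref{MatrixEnergyEquality} emerge in precisely the stated form; everything else is a mechanical consequence of the equivalence already established in Proposition \ref{MildEquivalence}.
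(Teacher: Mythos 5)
Your proposal is correct and follows essentially the same route as the paper: transfer the data via $u^0=-\divr(M^0)$, invoke Theorem \ref{MildExistenceNS} and Proposition \ref{MildEquivalence}, and translate the lifespan bound, energy equality, and $BMO$ smallness condition through the norm identities $\|\divr(M)\|_{L^2}=\|u\|_{L^2}$ and $\|-\Delta M\|_{L^2}^2=2\|\nabla u\|_{L^2}^2$ coming from Proposition \ref{isometry}. Your explicit treatment of the uniqueness transfer and of $\divr$ as a bijection between $\dot{\mathcal{H}}^1_{st}\cap\dot{\mathcal{H}}^2_{st}$ and $\mathbf{H}^1_{df}$ is slightly more detailed than the paper's, but it is the same argument.
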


\begin{proof}
Fix $M^0 \in \mathcal{\dot{H}}^1_{st}
\cap\mathcal{\dot{H}}^2_{st}$, 
and let $u^0=-\divr(M^0)$, 
noting that $u^0\in\mathbf{H}^1_{df}$.
Applying Theorem \ref{MildExistenceNS}, we find that there exists a unique mild solution to the Navier--Stokes equation
$u\in C\left(\left[0,T_{max}\right);
\mathbf{\dot{H}^1}_{df}\right)$, where
\begin{equation}
    T_{max}<\frac{C}
    {\left\|\nabla u^0\right\|_{L^2}^4},
\end{equation}
and for all $0<t<T_{max}$,
\begin{equation}
    \frac{1}{2}\|u(\cdot,t)\|_{L^2}^2
    +\int_0^t\|\nabla u(\cdot,\tau)\|_{L^2}^2 \diff\tau
    = \frac{1}{2}\left\|u^0\right\|_{L^2}^2.
\end{equation}
For all $0\leq t<T_{max},$ let 
$M(\cdot,t)=2\nabla_{sym}(-\Delta)^{-1}u(\cdot,t)$,
noting that 
$M\in C\left(\left[0,T_{max}\right);
\mathcal{\dot{H}}^1_{st}\cap \mathcal{\dot{H}}^2_{st}\right)$.
Applying Proposition \ref{MildEquivalence}, it is clear that $M$ is a mild solution of the Navier--Stokes matrix potential equation.
We can also see that this mild solution satisfies the initial value problem with $M(\cdot,0)=M^0$ by
recalling that $u^0=-\divr\left(M^0\right)$,
and applying Theorem \ref{CharacterizationARMA},
\begin{align}
    M(\cdot,0)
    &=
    2\nabla_{sym}(-\Delta)^{-1}u^0\\
    &=
    -2\nabla_{sym}(-\Delta)^{-1}\divr\left(M^0\right) \\
    &=
    M^0.
\end{align}

We have now established the local-in-time existence and uniqueness of mild solutions to the Navier--Stokes matrix potential equation using its equivalence to the Navier--Stokes equation.
Observing that $u=-\divr(M)$, we can see that
\begin{equation}
    \|\divr(M)\|_{L^2}^2 =\|u\|_{L^2}^2.
\end{equation}
Observing that $-\Delta M=2\nabla_{sym}u$,
we can apply Proposition \ref{isometry} to find that
\begin{align}
    \|-\Delta M\|_{L^2}^2 
    &= 
    4\|\nabla_{sym} u\|_{L^2}^2 \\
    &=
    2\|\nabla u\|_{L^2}^2
\end{align}
Using these two inequalities, \eqref{ExistenceBound} and \eqref{MatrixEnergyEquality} follow from their analogues in Theorem \ref{MildExistenceNS}.

Finally, we observe that if $\left\|M^0\right\|_{BMO}<\epsilon$, then
\begin{align}
    \left\|u^0\right\|_{BMO^{-1}}
    &=
    \left\|-\divr\left(M^0\right)\right\|_{BMO^{-1}} \\
    &=
    \left\|M^0\right\|_{BMO} \\
    &< \epsilon,
\end{align}
and therefore $T_{max}=+\infty$.
This completes the proof.
\end{proof}

\begin{remark}
Koch and Tataru use a different norm for the space $BMO^{-1}_{df}$ in \cite{KochTataru}, taking the norm to be
    \begin{equation}
        \|f\|_{\widetilde{BMO}^{-1}}
        =
        \left(\sup_{\substack{x\in\mathbb{R}^3
        \\ R>0}}
        \frac{1}{|B(x,R)|}
        \int_{B(x,R)} \int_0^{R^2}
        \left|e^{t\Delta}f\right|^2
        \diff t \diff y\right)^\frac{1}{2};
    \end{equation}
however, these norms are equivalent. It is immediately apparent that for all $f\in BMO$,
\begin{equation}
    \|f\|_{BMO}
    =
    \|\nabla f\|_{\widetilde{BMO}^{-1}},
\end{equation}
which motivates the definition of $\widetilde{BMO}^{-1}$.
The equivalence of these norms is shown in one direction in \cite{KochTataru}.
In particular, they show in the proof of Theorem 1 that for all 
$M\in BMO(\mathbb{R}^3;
\mathcal{S}^{3\times 3})$,
\begin{align}
    \|-\divr(M)\|_{\widetilde{BMO}^{-1}}
    &\leq  
    \|\nabla M
    \|_{\widetilde{BMO}^{-1}} \\
    &=
    \|M\|_{BMO}
\end{align}
If we let $u=-\divr(M)$, then we find that
\begin{equation}
    \|u\|_{\widetilde{BMO}^{-1}}
    \leq \|u\|_{BMO^{-1}}.
\end{equation}

The reverse direction is not proven explicitly in \cite{KochTataru}, because Koch and Tataru do not consider the strain constraint space $BMO_{st}$. It is obviously not true that there is a constant $C>0$ such that for all $M\in BMO\left(\mathbb{R}^3;
\mathcal{S}^{3\times 3}\right)$,
\begin{equation} \label{BoundBMO}
   \|M\|_{BMO}\leq C\|-\divr(M)\|_{\widetilde{BMO}^{-1}}, 
\end{equation}
because we can simply take any nonzero, non-constant matrix, such that $\divr(M)=0$,
and then \eqref{BoundBMO} will fail.
However, when we restrict to the space $BMO_{st}$, such an inequality will hold. Using the boundedness of Riesz transform from $BMO \to BMO$ and from $BMO^{-1} \to BMO^{-1}$ and the constraint condition
$M=-2\nabla_{sym}\divr(-\Delta)^{-1}M$, we find that
\begin{align}
    \|M\|_{BMO}
    &=
    2\left\|\nabla_{sym}\divr
    (-\Delta)^{-1}M
    \right\|_{BMO} \\
    &=
    2\left\|R_{sym}(-\Delta)^{-\frac{1}{2}}
    \divr(M)\right\|_{BMO} \\
    &\leq
    C\left\|(-\Delta)^{-\frac{1}{2}}
    \divr(M)\right\|_{BMO} \\
    &=
    C\left\|\nabla
    (-\Delta)^{-\frac{1}{2}}
    \divr(M)
    \right\|_{\widetilde{BMO}^{-1}} \\
    &\leq
    C\|-\divr(M)
    \|_{\widetilde{BMO}^{-1}}.
\end{align}
See in particular the proof of Lemma 4.1 in \cite{KochTataru}.
Again recalling that $u=-\divr(M)$, we can see that
\begin{equation}
    \|u\|_{BMO^{-1}}
    \leq
    C \|u\|_{\widetilde{BMO}^{-1}},
\end{equation}
and the norms are equivalent.
\end{remark}

\section{The two dimensional case and the general
\textit{d}-dimensional case}
\label{d-DimensionalSection}

In this section we will consider the general case for the Helmholtz decomposition for the the space of symmetric matrices over $\mathbb{R}^d, d\geq 2$. Because the structure of the proof is broadly similar to the case $d=3$, we will sketch the proofs without getting exhaustively to the linear algebra details.
We will begin with the case $d=2$.

\begin{theorem} \label{Helmholtz2D}
For $\mathcal{L}^2\left(\mathbb{R}^2\right)$ we have the following decomposition
\begin{equation}
\mathcal{L}^2\left(\mathbb{R}^2\right)
=\mathcal{L}^2_{st}\left(\mathbb{R}^2\right)
\oplus \mathcal{L}^2_{Hess}\left(\mathbb{R}^2\right)
\oplus L^2_{\widetilde{Id}}\left(\mathbb{R}^2\right).
\end{equation}
\end{theorem}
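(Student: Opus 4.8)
The plan is to run the Fourier-space argument used for Theorem \ref{HelmholtzOne}, now over $\mathbb{R}^2$, where the space of complex symmetric matrices $\mathcal{S}^{2\times2}_{\mathbb{C}}$ is only three-dimensional, so the decomposition collapses from four summands to three. Throughout, fix $\xi\in\mathbb{R}^2$ with $\xi\neq 0$ and let $\eta\in\mathbb{R}^2$ be the unit vector making $\mathcal{B}(\xi)=\{\xi/|\xi|,\eta\}$ an orthonormal basis.

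First I would prove the two-dimensional analogue of Propositions \ref{FourierConstraint} and \ref{FourierConstrainDivFree}, simply repeating those computations. For $M=\nabla_{sym}(-\Delta)^{-\frac{1}{2}}u$ with $u\in\mathbf{L}^2_{df}$, the constraint $\xi\cdot\hat u(\xi)=0$ forces $\hat u(\xi)=a(\xi)\eta$ for a scalar $a$, and symmetrizing $2\pi i\,\xi\otimes\hat u(\xi)$ and dividing by $2\pi|\xi|$ gives $[\widehat M(\xi)]_{\mathcal{B}(\xi)}=\tfrac{ia}{2}\bigl(\tfrac{\xi}{|\xi|}\otimes\eta+\eta\otimes\tfrac{\xi}{|\xi|}\bigr)$; conversely any such $\widehat M$ arises this way by setting $\hat u(\xi)=2a(\xi)\eta$, and $|\hat u|^2=2|\widehat M|^2$ reproves Proposition \ref{isometry} in this dimension. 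For $M=\Hess(-\Delta)^{-1}f$ one gets $\widehat M(\xi)=-\hat f(\xi)\,\tfrac{\xi}{|\xi|}\otimes\tfrac{\xi}{|\xi|}$. For the divergence-free piece, $\divr(M)=0$ translates to $\widehat M(\xi)\xi=0$, which kills the first row and column in the basis $\mathcal{B}(\xi)$, leaving $[\widehat M(\xi)]_{\mathcal{B}(\xi)}\in\spn\{\eta\otimes\eta\}$; since $I_2-\xi\otimes\xi/|\xi|^2=\eta\otimes\eta$, this is exactly the Fourier image of $\mathcal{L}^2_{\widetilde{Id}}(\mathbb{R}^2)$, so $\mathcal{L}^2_{divfree}(\mathbb{R}^2)=\mathcal{L}^2_{\widetilde{Id}}(\mathbb{R}^2)$, and imposing $\tr(M)=0$ in addition forces $\widehat M\equiv 0$, i.e. $\mathcal{L}^2_{tr\&divfree}(\mathbb{R}^2)=\{0\}$. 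This last vanishing is the only genuinely two-dimensional input.

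Then I would conclude exactly as in Theorem \ref{HelmholtzOne}. Let $V_1,V_2,V_3\subset\mathcal{S}^{2\times2}$ be the one-dimensional spans of $\tfrac{1}{\sqrt2}\bigl(\tfrac{\xi}{|\xi|}\otimes\eta+\eta\otimes\tfrac{\xi}{|\xi|}\bigr)$, $\tfrac{\xi}{|\xi|}\otimes\tfrac{\xi}{|\xi|}$, and $\eta\otimes\eta$ respectively; these three unit matrices are orthonormal, so (the analogue of Lemma \ref{LemmaSym6}) $\mathcal{S}^{2\times2}=V_1\oplus V_2\oplus V_3$ orthogonally. Given $M\in\mathcal{L}^2(\mathbb{R}^2)$, decompose $[\widehat M(\xi)]_{\mathcal{B}(\xi)}$ pointwise into its $V_1$-, $V_2$-, $V_3$-components---each is measurable and square integrable because, written in the $\mathcal{B}(\xi)$ coordinates, the three projections are fixed, $\xi$-independent linear maps---and invert the Fourier transform to obtain $M=M^1+M^2+M^3$ with $M^1\in\mathcal{L}^2_{st}$, $M^2\in\mathcal{L}^2_{Hess}$, $M^3\in\mathcal{L}^2_{\widetilde{Id}}$ by the characterizations above. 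Orthogonality follows from Plancherel together with the orthogonality of $V_1,V_2,V_3$, since for $S,M\in\mathcal{L}^2$ one has $\langle S,M\rangle=\int_{\mathbb{R}^2}[\widehat M(\xi)]_{\mathcal{B}(\xi)}\odot\overline{[\widehat S(\xi)]_{\mathcal{B}(\xi)}}\,\diff\xi$, whose integrand vanishes whenever $S$ and $M$ lie in different summands.

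I do not expect a real obstacle: the two-dimensional argument is strictly easier than the three-dimensional one, and the only point worth flagging is $\mathcal{L}^2_{tr\&divfree}(\mathbb{R}^2)=\{0\}$, which is precisely why there is no fourth summand. Indeed, a shorter route is to observe that the proofs of Theorems \ref{HelmholtzOne} and \ref{HelmholtzTwo} use nothing about $d=3$ beyond the finite-dimensional linear algebra, invoke them verbatim in dimension two, and then note that the final summand collapses; I would present the argument in that condensed form, consistent with the stated intention to sketch the $d$-dimensional cases.
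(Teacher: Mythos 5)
Your proposal is correct and follows essentially the same route as the paper: a pointwise Fourier-space decomposition of $\mathcal{S}^{2\times 2}$ into the orthogonal spans of $\frac{\xi}{|\xi|}\otimes\frac{\xi}{|\xi|}$, the symmetrized $\frac{\xi}{|\xi|}\otimes\eta$, and $\eta\otimes\eta$, matched to $\mathcal{L}^2_{Hess}$, $\mathcal{L}^2_{st}$, and $\mathcal{L}^2_{\widetilde{Id}}$ respectively. Your added observations that $\mathcal{L}^2_{divfree}(\mathbb{R}^2)=\mathcal{L}^2_{\widetilde{Id}}(\mathbb{R}^2)$ and $\mathcal{L}^2_{tr\&divfree}(\mathbb{R}^2)=\{0\}$ are exactly the content of the paper's subsequent Propositions \ref{DivFree2D} and \ref{trdivTrivial}, so nothing is missing.
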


\begin{proof}
We begin by letting, for all $\xi\neq 0$,
\begin{equation}
    \mathcal{B}=\left\{
    \frac{\xi}{|\xi|},\mu\right\}
\end{equation}
be an orthogonal basis for $\mathbb{R}^2$.
We can then observe that
\begin{equation}
    \Tilde{\mathcal{B}}
    =
    \left\{
    \frac{\xi}{|\xi|}\otimes \frac{\xi}{|\xi|},
    \frac{1}{\sqrt{2}}\left(
    \frac{\xi}{|\xi|}\otimes\mu
    +\mu\otimes \frac{\xi}{|\xi|}\right),
    \mu\otimes \mu
    \right\}
\end{equation}
is an orthogonal basis for $\mathcal{S}^{2\times 2}$, the space of complex valued symmetric matrices.
Finally we observe that $M\in \mathcal{L}^2_{Hess}$ 
if and only if for all $\xi\in\mathbb{R}^2$
\begin{equation}
    \hat{M}(\xi) \in \spn\left\{
    \frac{\xi}{|\xi|}\otimes \frac{\xi}{|\xi|}\right\};
\end{equation}
$M\in \mathcal{L}^2_{st}$ if and only if 
\begin{equation}
    \hat{M}(\xi) \in \spn\left\{
    \frac{1}{\sqrt{2}}\left(
    \frac{\xi}{|\xi|}\otimes\mu
    +\mu\otimes \frac{\xi}{|\xi|}\right)\right\};
\end{equation}
$M\in \mathcal{L}^2_{\widetilde{Id}}$ 
if and only if for all $\xi\in\mathbb{R}^2$
\begin{equation}
    \hat{M}(\xi) \in \spn\left\{
    \mu\otimes\mu\right\}.
\end{equation}
The orthogonal decomposition then follows immediately by taking the orthogonal decomposition point-wise in Fourier space. 
\end{proof}

\begin{proposition} \label{DivFree2D}
    In the case $d=2$, we have
    \begin{equation}
        \mathcal{L}^2_{\widetilde{Id}}\left(\mathbb{R}^2\right)
        =
        \mathcal{L}^2_{divfree}\left(\mathbb{R}^2\right)
    \end{equation}
\end{proposition}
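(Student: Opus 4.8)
The plan is to work entirely on the Fourier side, reusing the orthonormal basis $\mathcal{B}(\xi)=\{\xi/|\xi|,\mu\}$ of $\mathbb{R}^2$ and the induced basis $\widetilde{\mathcal{B}}$ of $\mathcal{S}^{2\times 2}$ introduced in the proof of Theorem \ref{Helmholtz2D}. From that proof we already know that $M\in\mathcal{L}^2_{\widetilde{Id}}(\mathbb{R}^2)$ if and only if $\widehat M\in\mathcal{L}^2$ and $\widehat M(\xi)\in\spn\{\mu\otimes\mu\}$ for almost every $\xi\neq 0$. So it suffices to establish the same Fourier characterization for $\mathcal{L}^2_{divfree}(\mathbb{R}^2)$.

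Next I would translate the divergence-free condition into Fourier space exactly as in the proof of Proposition \ref{FourierConstraint}: the condition $\divr(-\Delta)^{-\frac12}M=0$ is equivalent to $\widehat M(\xi)\,\xi=0$ for almost every $\xi\neq 0$. The key linear-algebra step, which is where the dimension $d=2$ enters decisively, is the observation that a symmetric $2\times 2$ matrix $A$ satisfies $A\xi=0$ if and only if $A\in\spn\{\mu\otimes\mu\}$. Indeed, if $A\xi=0$ then by symmetry $\langle A\mu,\xi\rangle=\langle\mu,A\xi\rangle=0$, so $A\mu$ is a scalar multiple of $\mu$ (since $\{\xi/|\xi|,\mu\}$ spans $\mathbb{R}^2$), and together with $A\xi=0$ this forces $A$ to be a scalar multiple of $\mu\otimes\mu$; the converse is immediate. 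Combining these, $M\in\mathcal{L}^2_{divfree}(\mathbb{R}^2)$ if and only if $\widehat M(\xi)\in\spn\{\mu\otimes\mu\}$ almost everywhere, which is precisely the characterization of $\mathcal{L}^2_{\widetilde{Id}}(\mathbb{R}^2)$, and hence the two subspaces of $\mathcal{L}^2(\mathbb{R}^2)$ coincide.

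Equivalently — and this is worth recording — one may invoke the two-dimensional form of the decomposition $\mathcal{L}^2_{divfree}=\mathcal{L}^2_{\widetilde{Id}}\oplus\mathcal{L}^2_{tr\&divfree}$ from Theorem \ref{HelmholtzTwo} together with the fact, noted in the introduction, that $\mathcal{L}^2_{tr\&divfree}(\mathbb{R}^2)=\ker(\tr)\cap\ker(\divr)=\{0\}$: any symmetric $2\times 2$ matrix annihilating $\xi$ is a multiple of $\mu\otimes\mu$, whose trace vanishes only if the matrix is zero. I do not expect any genuine obstacle here beyond making this $2\times 2$ linear-algebra fact explicit; the entire content of the statement is that for $d\ge 3$ the kernel of $A\mapsto A\xi$ inside $\mathcal{S}^{d\times d}$ is strictly larger than $\spn\{\mu\otimes\mu\}$ — that extra room being exactly $\mathcal{L}^2_{tr\&divfree}$ — whereas in the plane that bin is empty.
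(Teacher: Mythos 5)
Your primary argument is correct and matches the paper's proof in all essentials: both work pointwise on the Fourier side with the basis $\{\xi/|\xi|,\mu\}$, both translate the condition $M\in\ker(\divr)$ into $\widehat M(\xi)\,\xi=0$, and both conclude $\widehat M(\xi)\in\spn\{\mu\otimes\mu\}$. The paper does this by expanding $\widehat M(\xi)$ in the basis $\widetilde{\mathcal{B}}$ with coefficients $f,g,h$ and reading off $f=g=0$; your version, using the symmetry of $A$ to show $\langle A\mu,\xi\rangle=0$ and hence $A\mu\in\spn\{\mu\}$, is a marginally cleaner way to state the same $2\times2$ linear-algebra fact, but it is not a different proof.

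One caution about your closing remark, though: the ``equivalent'' route via $\mathcal{L}^2_{divfree}=\mathcal{L}^2_{\widetilde{Id}}\oplus\mathcal{L}^2_{tr\&divfree}$ and $\mathcal{L}^2_{tr\&divfree}(\mathbb{R}^2)=\{0\}$ is circular within the paper's logical ordering. Theorem~\ref{HelmholtzTwo} is stated and proved only for $d=3$; the extension of that decomposition to $d=2$ is established only in the remark following Theorem~\ref{HelmholtzGeneral}, which in turn rests on Proposition~\ref{DivFree2D} together with Proposition~\ref{trdivTrivial} --- and the paper's proof of Proposition~\ref{trdivTrivial} explicitly invokes Proposition~\ref{DivFree2D}. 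So the fact that $\mathcal{L}^2_{tr\&divfree}(\mathbb{R}^2)=\{0\}$ is downstream of the statement you are trying to prove, not available as an independent input. Your direct Fourier-side argument is the right one to keep; the ``equivalent'' formulation should at most be mentioned as a consequence, not as an alternative proof.
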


\begin{proof}
Suppose $M \in \mathcal{L}^2_{divfree}$.
We know from the decomposition in Theorem \ref{Helmholtz2D} that we can express the Fourier transform by
\begin{equation}
    \hat{M}(\xi)
    = 
    f(\xi) \frac{\xi}{|\xi|} \otimes \frac{\xi}{|\xi|}
    +\frac{g(\xi)}{\sqrt{2}}\left(
    \frac{\xi}{|\xi|}\otimes \mu
    +\mu\otimes \frac{\xi}{|\xi|}\right)
    +h(\xi) \mu\otimes \mu. 
\end{equation}
Because $M\in\ker(\divr)$, we find that
for all $\xi\in\mathbb{R}^2$
\begin{align}
    \hat{M}(\xi)\xi
    &=
    f(\xi)|\xi|\xi+\frac{g}{\sqrt{2}}|\xi|\mu \\
    &=0,
\end{align}
and so we can conclude that $f,g=0$ almost everywhere.
Therefore we can conclude that for all $\xi\in\mathbb{R}^2$,
\begin{equation}
    \hat{M}(\xi) = h(\xi) \mu\otimes\mu,
\end{equation}
and consequently that $M\in\mathcal{L}^2_{\widetilde{Id}}$.

Likewise, if $M\in\mathcal{L}^2_{\widetilde{Id}}$,
then for all $\xi\in\mathbb{R}^2$
\begin{equation}
    \hat{M}(\xi)=h(\xi)\mu\otimes \mu.
\end{equation}
We can then see clearly that 
for all $\xi\in\mathbb{R}^2$,
\begin{equation}
    \hat{M}(\xi)\xi=0,
\end{equation}
and so $M\in\ker(\divr)$.
This completes the proof.
\end{proof}

\begin{proposition} \label{trdivTrivial}
The space of trace and divergence free, symmetric
$2 \times 2$ is trivial, with
\begin{equation}
    \mathcal{L}^2_{tr\&divfree}\left(\mathbb{R}^2\right)=
    \ker(\tr)\cap\ker(\divr)
    =\left\{\mathbf{0}\right\}.
\end{equation}
\end{proposition}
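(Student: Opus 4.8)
The plan is to work entirely on the Fourier side, reusing the two-dimensional Fourier-space bookkeeping already set up in Theorem \ref{Helmholtz2D} and Proposition \ref{DivFree2D}. The crucial structural feature of the plane is that for each $\xi\neq 0$ the orthogonal complement of $\xi$ in $\mathbb{R}^2$ is only one-dimensional, spanned by the single unit vector $\mu$ appearing in the basis $\mathcal{B}=\{\xi/|\xi|,\mu\}$. Consequently the divergence-free constraint, which in Fourier space reads $\widehat{M}(\xi)\xi=0$ for almost every $\xi$, forces the entire first row and first column of $[\widehat{M}(\xi)]_{\mathcal{B}}$ to vanish, leaving only a single scalar degree of freedom per frequency.

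Concretely, I would start by fixing $M\in\mathcal{L}^2_{tr\&divfree}(\mathbb{R}^2)=\ker(\tr)\cap\ker(\divr)$. Exactly as in the proof of Proposition \ref{DivFree2D}, the condition $M\in\ker(\divr)$ gives $\widehat{M}(\xi)\xi=0$ almost everywhere, and expanding $\widehat{M}(\xi)$ in the orthonormal basis $\widetilde{\mathcal{B}}$ of $\mathcal{S}^{2\times 2}$ yields
\begin{equation}
    \widehat{M}(\xi)=h(\xi)\,\mu\otimes\mu
\end{equation}
for some $h\in L^2(\mathbb{R}^2)$. Then I would simply take the trace: since $|\mu|=1$, $\tr(\widehat{M}(\xi))=h(\xi)$ for almost every $\xi$. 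Because $M\in\ker(\tr)$ we have $\tr(\widehat{M}(\xi))=0$ almost everywhere, so $h=0$ in $L^2$, hence $\widehat{M}=0$ and therefore $M=0$. An equivalent route, which I might mention as a remark, is to combine Proposition \ref{DivFree2D} with the Fourier description of $\mathcal{L}^2_{\widetilde{Id}}$: a divergence-free symmetric $2\times 2$ field has $\widehat{M}(\xi)=\hat f(\xi)\bigl(I_2-\xi\otimes\xi/|\xi|^2\bigr)$, whose trace is $\hat f(\xi)(2-1)=\hat f(\xi)$, so the trace-free condition again forces $\hat f=0$.

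There is really no substantive obstacle here; the statement is a clean consequence of the dimension count, and the only point requiring a modicum of care is that all identities hold pointwise almost everywhere in frequency, so that "$h=0$ a.e." legitimately upgrades to "$\widehat{M}=0$ in $L^2$" and hence, by the Plancherel isometry, to "$M=0$ in $\mathcal{L}^2(\mathbb{R}^2)$." This is also the natural place to contrast with the three-dimensional situation: there the orthogonal complement of $\xi$ is two-dimensional, leaving a genuine two-parameter family of trace- and divergence-free matrices at each frequency, which is exactly why $\mathcal{L}^2_{tr\&divfree}(\mathbb{R}^3)$ is nontrivial while $\mathcal{L}^2_{tr\&divfree}(\mathbb{R}^2)=\{\mathbf{0}\}$.
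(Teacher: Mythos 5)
Your proposal is correct and follows essentially the same route as the paper: invoke the divergence-free constraint (via Proposition \ref{DivFree2D}) to reduce $\widehat{M}(\xi)$ to the one-dimensional span of $\mu\otimes\mu$, then observe that the trace-free condition kills the remaining scalar $h$. The paper's proof is exactly this two-step argument, so no further comment is needed.
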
.

\begin{proof}
Suppose $M \in \mathcal{L}^2_{tr\&divfree}$.
Because $M\in \ker(\divr)$, we can apply Proposition \ref{DivFree2D} to see that $M\in\mathcal{L}^2_{\widetilde{Id}}$,
and therefore 
\begin{equation}
    \hat{M}(\xi)=h(\xi) \mu\otimes \mu.
\end{equation}
The trace operator commutes with the Fourier transform, so we can also conclude that
\begin{equation}
    \tr\left(\hat{M}(\xi)\right)=h(\xi)=0,
\end{equation}
almost everywhere $\xi\in\mathbb{R}^2$.
Therefore, $M=0$, and this completes the proof.
\end{proof}

\begin{theorem} \label{HelmholtzGeneral}
For all $d\geq 3$, we have the orthogonal decomposition
\begin{align}
    \mathcal{L}^2 \left(\mathbb{R}^d\right)
    &= \label{FirstLine}
    \mathcal{L}^2_{st}\left(\mathbb{R}^d\right)
    \oplus \mathcal{L}^2_{Hess}\left(\mathbb{R}^d\right)
    \oplus \mathcal{L}^2_{divfree}
    \left(\mathbb{R}^d\right) \\
    &= \label{SecondLine}
    \mathcal{L}^2_{st}\left(\mathbb{R}^d\right)
    \oplus \mathcal{L}^2_{Hess}\left(\mathbb{R}^d\right)
    \oplus L^2_{\widetilde{Id}}\left(\mathbb{R}^d\right)
    \oplus \mathcal{L}^2_{tr\&divfree}
    \left(\mathbb{R}^d\right)
\end{align}

\begin{proof}
For each $\xi\in\mathbb{R}^d, \xi\neq 0$,
let
\begin{equation}
    \mathcal{B}=
    \left\{
    \frac{\xi}{|\xi|},\mu_1,...,\mu_{d-1}
    \right\}
\end{equation}
be an orthonormal basis for $\mathbb{R}^d$.
Let
\begin{align}
    E_1
    &=
    \spn\left\{
    \frac{\xi}{|\xi|}\otimes \frac{\xi}{|\xi|}
    \right\} \\
    E_2
    &=
    \spn\left\{
    \frac{1}{\sqrt{2}}\left(
    \frac{\xi}{|\xi|}\otimes\mu_k
    +\mu_k\otimes \frac{\xi}{|\xi|}\right)
    \right\}_{1\leq k\leq d-1} \\
    E_3
    &=
    \spn\left\{
    \frac{1}{\sqrt{2}}\left(
    \mu_j\otimes\mu_k
    +\mu_k\otimes \mu_j\right)
    \right\}_{1\leq j< k\leq d-1} \\
    E_4
    &=
    \spn\left\{
    \frac{1}{\sqrt{2}}\left(\mu_k\otimes \mu_k
    -\mu_{k+1}\otimes \mu_{k+1}\right)
    \right\}_{1\leq k\leq d-2} \\
    E_5
    &=
    \spn\left\{
    \frac{1}{\sqrt{d-1}}\left(
    \sum_{k=1}^{d-1}\mu_k\otimes\mu_k
    \right)\right\}.
\end{align}
Observe that the set vectors above form an orthonormal basis for $S^{d\times d}$, the space of complex valued symmetric matrices. In particular we have the orthogonal decomposition
\begin{equation}
    S^{d\times d}=
    E_1 \oplus E_2 \oplus E_3 \oplus E_4 \oplus E_5.
\end{equation}
As is the case for the three dimensional decomposition, the proof follows from a straightforward dimension counting argument (orthogonality is trivial), but we will leave the linear algebra details to the reader.

Also analogous to the three dimensional case,
we have for 
$M\in \mathcal{L}^2\left(\mathbb{R}^d\right)$:
\begin{itemize}
    \item 
$M\in \mathcal{L}^2_{Hess}$ if and only if
for all $\xi\in\mathbb{R}^d, \hat{M}(\xi)\in E_1$
    \item
$M\in \mathcal{L}^2_{st}$ if and only if
for all $\xi\in\mathbb{R}^d, \hat{M}(\xi)\in E_2$;
    \item
$M\in \mathcal{L}^2_{divfree}$ if and only if
for all $\xi\in\mathbb{R}^d, 
\hat{M}(\xi)\in E_3\oplus E_4 \oplus E_5$
    \item
$M\in \mathcal{L}^2_{tr\&divfree}$ if and only if
for all $\xi\in\mathbb{R}^d, \hat{M}\in E_3 \oplus E_4$
    \item
$M\in \mathcal{L}^2_{\widetilde{Id}}$ if and only if
for all $\xi\in\mathbb{R}^d, 
\hat{M}(\xi)\in E_5$.
\end{itemize}
Using this characterization, we can simply perform the orthogonal decomposition point-wise in Fourier space, and the result follows.
\end{proof}

\begin{remark}
Note that Proposition \ref{DivFree2D} and Theorem \ref{Helmholtz2D} imply that \eqref{FirstLine} in Theorem \ref{HelmholtzGeneral} holds also for the case $d=2$. Furthermore, Proposition \ref{trdivTrivial} and Theorem \ref{Helmholtz2D} imply that while \eqref{SecondLine} also holds for the case of $d=2$, the last space in the decomposition is trivial. This means the decomposition holds in general for $d\geq 2$.
\end{remark}
\end{theorem}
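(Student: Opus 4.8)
The plan is to run the entire argument on the Fourier side, exactly as in the proofs of Theorems~\ref{HelmholtzOne} and~\ref{HelmholtzTwo}, so that both displayed decompositions reduce to the single pointwise-in-$\xi$ statement that the five subspaces $E_1,\dots,E_5$ give an orthogonal decomposition of the space $\mathcal{S}^{d\times d}_{\mathbb{C}}$ of complex symmetric $d\times d$ matrices; this is then transported through the isometric Fourier transform to the claimed decompositions of $\mathcal{L}^2(\mathbb{R}^d)$.

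First I would prove the $d$-dimensional analogue of Lemma~\ref{LemmaSym6}: for any orthonormal basis $\{\xi/|\xi|,\mu_1,\dots,\mu_{d-1}\}$ of $\mathbb{R}^d$ the listed symmetrized tensor products are orthonormal in the Frobenius inner product, and since
\begin{equation}
    1+(d-1)+\binom{d-1}{2}+(d-2)+1=\frac{d(d+1)}{2}=\dim_{\mathbb{C}}\mathcal{S}^{d\times d}_{\mathbb{C}},
\end{equation}
they form an orthonormal basis, whence $\mathcal{S}^{d\times d}_{\mathbb{C}}=E_1\oplus E_2\oplus E_3\oplus E_4\oplus E_5$. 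Then I would establish the Fourier-space characterizations in the bulleted list, generalizing Propositions~\ref{FourierConstraint} and~\ref{FourierConstrainDivFree}. The cases of $\mathcal{L}^2_{Hess}$ (multiplier a scalar times $\frac{\xi}{|\xi|}\otimes\frac{\xi}{|\xi|}$), of $\mathcal{L}^2_{divfree}$ (the condition $\widehat{M}(\xi)\xi=0$, that is, vanishing of the first row and column in the basis $\mathcal{B}(\xi)$), of $\mathcal{L}^2_{\widetilde{Id}}$ (multiplier a scalar times $I_d-\frac{\xi\otimes\xi}{|\xi|^2}=\sum_k\mu_k\otimes\mu_k$), and of $\mathcal{L}^2_{tr\&divfree}$ (first row and column zero together with vanishing trace on the $\mu$-block) are word-for-word repetitions of the three-dimensional arguments. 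The only case carrying real content is $\mathcal{L}^2_{st}\leftrightarrow E_2$: writing $\nabla\cdot u=0$ as $\widehat{u}\perp\xi$, so $\widehat{u}=\sum_k a_k\mu_k$, one has $\widehat{\nabla u}=2\pi i\,\xi\otimes\widehat{u}$, and symmetrizing and dividing by $2\pi|\xi|$ places $[\widehat{M}(\xi)]_{\mathcal{B}(\xi)}$ in $E_2$; conversely, given $\widehat{M}$ with $[\widehat{M}(\xi)]_{\mathcal{B}(\xi)}\in E_2$ almost everywhere, one sets $\widehat{u}(\xi)$ equal to a fixed constant multiple of $\widehat{M}(\xi)\xi/|\xi|$ and verifies, as in the proof of Proposition~\ref{FourierConstraint}, that $\xi\cdot\widehat{u}=0$ and $|\widehat{u}|^2=2|\widehat{M}|^2$, so that $u\in\mathbf{L}^2_{df}$ and $M=\nabla_{sym}(-\Delta)^{-\frac12}u$.

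With these facts in hand the conclusion follows as in Theorems~\ref{HelmholtzOne} and~\ref{HelmholtzTwo}: decomposing $[\widehat{M}(\xi)]_{\mathcal{B}(\xi)}$ pointwise according to $\mathcal{S}^{d\times d}_{\mathbb{C}}=E_1\oplus E_2\oplus(E_3\oplus E_4\oplus E_5)$ and inverting the Fourier transform yields the three-fold decomposition~\eqref{FirstLine}, and refining the last summand as $(E_3\oplus E_4)\oplus E_5$ yields~\eqref{SecondLine}. Orthogonality is immediate from Plancherel together with basis-independence of the Frobenius inner product, which gives $\langle S,M\rangle=\int_{\mathbb{R}^d}[\widehat{M}(\xi)]_{\mathcal{B}(\xi)}\odot\overline{[\widehat{S}(\xi)]_{\mathcal{B}(\xi)}}\,\diff\xi$, combined with the mutual orthogonality of the $E_i$.

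The step I expect to be the genuine nuisance is not any of the above but the bookkeeping around the $\xi$-dependent frame $\mathcal{B}(\xi)$: on $\mathbb{R}^d\setminus\{0\}$ there is in general no measurable choice of orthonormal vectors $\mu_1(\xi),\dots,\mu_{d-1}(\xi)$ spanning $\xi^\perp$, so the summands $M^j$ must be produced without literally working in such a frame. This is handled, as in the three-dimensional proof, by observing that neither the five spaces nor the orthogonal projections onto them depend on the choice of the $\mu_k$: each projection is a Fourier multiplier given by an explicit, $SO(d-1)$-invariant and hence Borel measurable, matrix-valued function of $\widehat{\xi}=\xi/|\xi|$ alone --- for instance $P_{Hess}$ sends $\widehat{M}$ to $\big(\widehat{\xi}\cdot\widehat{M}\widehat{\xi}\big)\,\widehat{\xi}\otimes\widehat{\xi}$ and $P_{\widetilde{Id}}$ sends it to $\tfrac{1}{d-1}\,\tr\big((I_d-\widehat{\xi}\otimes\widehat{\xi})\widehat{M}\big)(I_d-\widehat{\xi}\otimes\widehat{\xi})$ --- so the decomposition and the orthogonality can be stated and verified intrinsically, the $E_i$ serving only to make the dimension count and the identification of the summands transparent.
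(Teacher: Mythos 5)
Your proof is correct and follows essentially the same Fourier-side route as the paper: the same five subspaces $E_1,\dots,E_5$, the same dimension count $1+(d-1)+\binom{d-1}{2}+(d-2)+1=\tfrac{d(d+1)}{2}$, the same pointwise-in-$\xi$ decomposition transported through Plancherel. Your closing paragraph, which observes that no globally measurable frame $\mu_1(\xi),\dots,\mu_{d-1}(\xi)$ exists on $\xi^\perp$ and resolves this by expressing the five projections as intrinsic, $SO(d-1)$-invariant Fourier multipliers in $\widehat{\xi}$ alone, is a genuine and worthwhile clarification of a point the paper leaves implicit (both here and in the $d=3$ case), but it is a refinement of the same argument rather than a different one.
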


\section{The anti-symmetric case}

In this section, we will consider the analogous decomposition for anti-symmetric matrix valued functions. We will prove Theorem \ref{HelmholtzAntiSymIntro}, which is broken into two pieces for the reader's covenience.

\begin{theorem} \label{HelmholtzAntiSym}
For all $d\geq 2$, the space of anti-symmetric matrix valued functions 
$L^2\left(\mathbb{R}^d;\mathcal{A}^{d\times d}\right)$ has the following orthogonal decomposition:
\begin{equation}
    L^2\left(\mathbb{R}^d;\mathcal{A}^{d\times d}\right)
    =
    \mathbb{L}^2_{vort} \oplus \mathbb{L}^2_{divfree}. 
\end{equation}
\end{theorem}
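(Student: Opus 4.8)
The plan is to follow the proof of Theorem~\ref{HelmholtzOne} essentially verbatim, replacing the frequency-adapted orthonormal basis of $\mathcal{S}^{d\times d}$ by one of $\mathcal{A}^{d\times d}$. Fix $\xi\in\mathbb{R}^d$, $\xi\neq 0$, and complete $\xi/|\xi|$ to an orthonormal basis $\mathcal{B}(\xi)=\{\xi/|\xi|,\mu_1,\dots,\mu_{d-1}\}$ of $\mathbb{R}^d$. The $\binom{d}{2}$ matrices $\tfrac{1}{\sqrt 2}(e_i\otimes e_j-e_j\otimes e_i)$, $i<j$, written in this basis form an orthonormal basis of $\mathcal{A}^{d\times d}$ (the antisymmetric analogue of Lemma~\ref{LemmaSym6}), which I would split into the $d-1$ matrices involving $\xi/|\xi|$, spanning
\[
V(\xi)=\spn\left\{\tfrac{1}{\sqrt 2}\left(\tfrac{\xi}{|\xi|}\otimes\mu_k-\mu_k\otimes\tfrac{\xi}{|\xi|}\right)\right\}_{1\le k\le d-1},
\]
and the remaining $\binom{d-1}{2}$ matrices built only from $\mu_1,\dots,\mu_{d-1}$, spanning $W(\xi)$; these subspaces are orthogonal and $\mathcal{A}^{d\times d}=V(\xi)\oplus W(\xi)$. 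More invariantly, $W(\xi)=\{A\in\mathcal{A}^{d\times d}:A\xi=0\}$ and $V(\xi)=W(\xi)^\perp$, so both depend measurably (indeed real-analytically) on $\xi\neq 0$, which is all that the argument requires.

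Next I would record the two Fourier-side characterizations, exactly as in Proposition~\ref{FourierConstraint}. For $\mathbb{L}^2_{divfree}$: since $\widehat{\divr(M)}(\xi)=2\pi i\,\widehat M(\xi)\xi$, the condition $\divr(-\Delta)^{-\frac12}M=0$ is equivalent to $\widehat M(\xi)\xi=0$ for a.e.\ $\xi$, and for an antisymmetric matrix the vanishing of the first column of $[\widehat M(\xi)]_{\mathcal{B}(\xi)}$ automatically forces the vanishing of the first row, so this is equivalent to $[\widehat M(\xi)]_{\mathcal{B}(\xi)}\in W(\xi)$ a.e. For $\mathbb{L}^2_{vort}$: if $v\in\mathbf{L}^2_{df}$ then $\xi\cdot\hat v(\xi)=0$, so $\hat v(\xi)\in\spn\{\mu_1,\dots,\mu_{d-1}\}$, and antisymmetrizing $\widehat{\nabla v}(\xi)=2\pi i\,\xi\otimes\hat v(\xi)$ and multiplying by the symbol $\tfrac{1}{2\pi|\xi|}$ of $(-\Delta)^{-\frac12}$ gives
\[
\mathcal{F}\left(\nabla_{asym}(-\Delta)^{-\frac12}v\right)(\xi)=\frac{i}{2}\left(\frac{\xi}{|\xi|}\otimes\hat v(\xi)-\hat v(\xi)\otimes\frac{\xi}{|\xi|}\right)\in V(\xi).
\]
Conversely, if $\widehat M\in L^2$ with $[\widehat M(\xi)]_{\mathcal{B}(\xi)}\in V(\xi)$ a.e., write $\widehat M(\xi)=\tfrac{\xi}{|\xi|}\otimes b(\xi)-b(\xi)\otimes\tfrac{\xi}{|\xi|}$ with $b(\xi)\perp\xi$, put $\hat v(\xi)=-2i\,b(\xi)$; then $\xi\cdot\hat v(\xi)=0$, $|\hat v(\xi)|^2=4|b(\xi)|^2=2|\widehat M(\xi)|^2$ (the two rank-one terms are orthogonal), hence $v\in\mathbf{L}^2_{df}$ and $M=\nabla_{asym}(-\Delta)^{-\frac12}v\in\mathbb{L}^2_{vort}$. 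Along the way this also records the antisymmetric analogue of Proposition~\ref{isometry}, namely $\|M\|_{L^2}^2=\tfrac12\|v\|_{L^2}^2$ for $M\in\mathbb{L}^2_{vort}$.

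With these pieces the conclusion is immediate, just as in Theorem~\ref{HelmholtzOne}: given $M\in L^2(\mathbb{R}^d;\mathcal{A}^{d\times d})$, write $[\widehat M(\xi)]_{\mathcal{B}(\xi)}=\widetilde M^1(\xi)+\widetilde M^2(\xi)$ with $\widetilde M^1(\xi)\in V(\xi)$, $\widetilde M^2(\xi)\in W(\xi)$, both in $L^2$, invert the Fourier transform to get $M=M^1+M^2$ with $M^1\in\mathbb{L}^2_{vort}$ and $M^2\in\mathbb{L}^2_{divfree}$ by the characterizations above, and deduce orthogonality of the two summands from the pointwise orthogonality of $V(\xi)$ and $W(\xi)$ via Plancherel, since $\langle S,M\rangle=\int_{\mathbb{R}^d}[\widehat M]_{\mathcal{B}}\odot[\overline{\widehat S}]_{\mathcal{B}}\,\diff\xi$. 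The only step requiring genuine care is the identification of $V(\xi)$ with the Fourier image of $\mathbb{L}^2_{vort}$ together with the norm bookkeeping $|\hat v|^2=2|\widehat M|^2$ (and checking that the measurable choice of $\mu_k$ and of $b(\xi)$ causes no trouble, exactly as was tacitly handled in the symmetric case); everything else — the algebra $\widehat{\divr M}=2\pi i\,\widehat M\xi$ and the dimension count $\binom{d}{2}=(d-1)+\binom{d-1}{2}$ — is routine.
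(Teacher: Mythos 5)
Your proof is correct and follows essentially the same route as the paper: the same frequency-adapted orthonormal basis of $\mathcal{A}^{d\times d}$, the same split into the subspace spanned by the matrices involving $\xi/|\xi|$ (the paper's $E_1$, your $V(\xi)$) and its complement ($E_2$, your $W(\xi)$), and the same pointwise-in-Fourier decomposition with orthogonality via Plancherel. The only difference is that you explicitly carry out the Fourier-side identifications of $\mathbb{L}^2_{vort}$ and $\mathbb{L}^2_{divfree}$ (including the norm bookkeeping $|\hat v|^2=2|\widehat M|^2$), which the paper states but leaves to the reader.
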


\begin{proof}
As in the proof of Theorem \ref{HelmholtzGeneral},
for each $\xi\in\mathbb{R}^d, \xi\neq 0$,
let
\begin{equation}
    \mathcal{B}=
    \left\{
    \frac{\xi}{|\xi|},\mu_1,...,\mu_{d-1}
    \right\}
\end{equation}
be an orthonormal basis for $\mathbb{R}^d$.
Let
\begin{align}
    E_1 &=
    \spn\left\{\frac{1}{\sqrt{2}}\left(
    \frac{\xi}{|\xi|}\otimes \mu_k-
    \mu_k\otimes \frac{\xi}{|\xi|}\right)
    \right\}_{1\leq k\leq d-1} \\
    E_2 &=
    \spn\left\{\frac{1}{\sqrt{2}}\left(
    \mu_j\otimes \mu_k-\mu_k\otimes\mu_j\right)
    \right\}_{1\leq j< k\leq d-1}.
\end{align}
It is again elementary linear algebra to check that
for all $\xi\in\mathbb{R}^d, \xi\neq 0$,
\begin{equation}
    A^{d\times d}=E_1 \oplus E_2.
\end{equation}
It also can be shown working from the Fourier space side of the definitions that for each $M\in \mathbb{L}^2(\mathbb{R}^d)$,
\begin{itemize}
    \item 
    $M\in \mathbb{L}^2_{vort}$ if and only if
    for all $\xi\in\mathbb{R}^d, 
    \hat{M}(\xi) \in E_1$
    \item 
    $M\in \mathbb{L}^2_{divfree}$ if and only if
    for all $\xi\in\mathbb{R}^d, 
    \hat{M}(\xi) \in E_2$.
\end{itemize}
The result then follows by taking the decomposition point-wise in Fourier space.
\end{proof}

\begin{corollary}
For all $d\geq 2$, the space of matrix valued functions
$L^2\left(\mathbb{R}^d;\mathbb{R}^{d\times d}\right)$
has the following orthogonal decomposition:
\begin{align}
    L^2\left(\mathbb{R}^d;\mathbb{R}^{d\times d}\right)
    &=
    L^2\left(\mathbb{R}^d;\mathcal{S}^{d\times d}\right)
    \oplus
    L^2\left(\mathbb{R}^d;\mathcal{A}^{d\times d}\right) \\
    &=
    \mathcal{L}^2_{st} \oplus \mathcal{L}^2_{Hess} 
\oplus \mathcal{L}^2_{divfree} \oplus
    \mathbb{L}^2_{vort} \oplus \mathbb{L}^2_{divfree}.
\end{align}
\end{corollary}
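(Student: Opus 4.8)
The plan is to assemble the stated decomposition from three ingredients, two of which are already in hand. First I would record the elementary pointwise splitting of an arbitrary real matrix into its symmetric and anti-symmetric parts,
\begin{equation}
    M = \tfrac{1}{2}\left(M + M^{tr}\right) + \tfrac{1}{2}\left(M - M^{tr}\right),
\end{equation}
and check that this is an \emph{orthogonal} splitting of $\mathbb{R}^{d\times d}$ with respect to the Frobenius inner product $A \odot B = \sum_{i,j} A_{ij} B_{ij} = \tr(A^{tr}B)$: for any symmetric $S$ and anti-symmetric $A$ we have $S \odot A = \tr(SA)$, and since $\tr(SA) = \tr\bigl((SA)^{tr}\bigr) = \tr(A^{tr}S^{tr}) = -\tr(AS) = -\tr(SA)$ by cyclicity of the trace, this quantity vanishes. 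Integrating the pointwise identity $S(x) \odot A(x) = 0$ over $\mathbb{R}^d$ shows that $L^2(\mathbb{R}^d;\mathcal{S}^{d\times d})$ and $L^2(\mathbb{R}^d;\mathcal{A}^{d\times d})$ are orthogonal subspaces of $L^2(\mathbb{R}^d;\mathbb{R}^{d\times d})$ whose algebraic sum is the whole space, which is exactly the first line of the displayed decomposition.

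Next I would substitute the two Helmholtz-type decompositions of the two blocks into this splitting. For the symmetric block I would invoke Theorem \ref{HelmholtzGeneral} together with the remark following it, which records that \eqref{FirstLine} also holds when $d = 2$, to write $L^2(\mathbb{R}^d;\mathcal{S}^{d\times d}) = \mathcal{L}^2_{st} \oplus \mathcal{L}^2_{Hess} \oplus \mathcal{L}^2_{divfree}$ for every $d \geq 2$. For the anti-symmetric block I would invoke Theorem \ref{HelmholtzAntiSym} to write $L^2(\mathbb{R}^d;\mathcal{A}^{d\times d}) = \mathbb{L}^2_{vort} \oplus \mathbb{L}^2_{divfree}$. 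Plugging both into the first line produces the second line, once one observes that the resulting five summands are pairwise orthogonal: the first three lie in the symmetric block and the last two in the anti-symmetric block, and those two blocks are orthogonal by the first paragraph, while orthogonality within each block is precisely the content of the two theorems just cited.

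There is essentially no obstacle here; the corollary is a repackaging of Theorems \ref{HelmholtzGeneral} and \ref{HelmholtzAntiSym}. The only point worth a sentence is the compatibility of inner products: the inner product on symmetric (respectively anti-symmetric) matrices used in the earlier Fourier-space arguments, the one making the bases $\mathcal{B}(\xi)$ orthonormal, is the restriction of the Frobenius inner product $\odot$ on $\mathbb{R}^{d\times d}$, so ``orthogonal'' means the same thing in the sub-blocks as in the full space and the three decompositions glue without mismatch. If desired, one could also record the finer version obtained by replacing $\mathcal{L}^2_{divfree}$ with $\mathcal{L}^2_{\widetilde{Id}} \oplus \mathcal{L}^2_{tr\&divfree}$ using the second line of Theorem \ref{HelmholtzGeneral}.
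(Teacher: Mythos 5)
Your proof is correct and follows the same route as the paper: split pointwise into symmetric and anti-symmetric parts, verify orthogonality of the two blocks, and then substitute the decompositions from Theorems \ref{HelmholtzGeneral} and \ref{HelmholtzAntiSym}. Your version merely spells out the trace computation and the compatibility of inner products that the paper leaves implicit.
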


\begin{proof}
We can see immediately that 
\begin{equation}
    L^2\left(\mathbb{R}^d;\mathbb{R}^{d\times d}\right)
    =
    L^2\left(\mathbb{R}^d;\mathcal{S}^{d\times d}\right)
    \oplus
    L^2\left(\mathbb{R}^d;\mathcal{A}^{d\times d}\right),
\end{equation}
simply by taken the symmetric and anti-symmetric part of the matrix at each point and noting that they are orthogonal. The result then follows from Theorems \ref{HelmholtzGeneral} and \ref{HelmholtzAntiSym}
\end{proof}

\begin{remark}
    In the special case, $d=3$, we can represent anti-symmetric matrices with vectors as follows.
    We will take
    \begin{equation}
        A=\left(
        \begin{array}{ccc}
            0 & w_3 & -w_2  \\
            -w_3 & 0 & w_1 \\
            w_2 & -w_1 & 0
        \end{array}
        \right).
    \end{equation}
    In this case, the matrix multiplication can be described by a vector cross product, 
    with for all $v\in\mathbb{R}^3$,
    \begin{equation}
        Av= v \times w.
    \end{equation}
    
    We can also see that in this case,
    \begin{equation}
        \divr A= -\nabla \times w.
    \end{equation}
    Using the fact that gradients are curl free, and applying the Helmholtz decomposition for anti-symmetric matrices and for vector fields, we can see that,
    \begin{align}
        \mathbb{L}^2_{vort}
        &=\left\{
        \left(\begin{array}{ccc}
            0 & w_3 & -w_2  \\
            -w_3 & 0 & w_1 \\
            w_2 & -w_1 & 0
        \end{array}\right):
        w\in \mathbf{L}^2_{df}\right\} \\
        \mathbb{L}^2_{divfree}
        &=\left\{
        \left(\begin{array}{ccc}
            0 & w_3 & -w_2  \\
            -w_3 & 0 & w_1 \\
            w_2 & -w_1 & 0
        \end{array}\right):
        w\in \mathbf{L}^2_{gr} \right\}
    \end{align}
    This decomposition is why the vorticity can be represented as a vector in the case when $d=3$, with 
    \begin{equation}
        \nabla_{asym} u=
        \frac{1}{2}
        \left(\begin{array}{ccc}
            0 & \omega_3 & -\omega_2  \\
            -\omega_3 & 0 & \omega_1 \\
            \omega_2 & -\omega_1 & 0
        \end{array}\right),
    \end{equation}
    where $\omega=\nabla\times u$,
    for all $u\in\dot{\mathbf{H}}^1$.

It is worth noting that the fact that all gradients are in the kernel of the curl for $d=3$, 
generalizes for all $d \geq 2$, to the statement that
    for all $f\in \dot{H}^2$,
    \begin{equation}
        \nabla_{asym}\nabla f=0,
    \end{equation}
    which follows immediately from the fact that all Hessians are symmetric.
\end{remark}

\section*{Acknowledgements}
This publication was supported in part by the Fields Institute for Research in the Mathematical Sciences while the first author was in residence during the Fall 2020 semester. Its contents are solely the responsibility of the authors and do not necessarily represent the official views of the Fields Institute.
This material is based upon work supported by the National Science Foundation under Grant No. DMS-1440140 while the first author participated in a program that was hosted by the Mathematical Sciences Research Institute in Berkeley, California, during the Spring 2021 semester.
The second author's research is supported by NSERC Discovery Grant RGPIN-2020-06829.

\bibliographystyle{plain}
\bibliography{Bib}
\end{document}